\theoremstyle{definition}
\theoremstyle{definition}
\theoremstyle{plain}
\newtheorem{theo}{Theorem}
\theoremstyle{plain}
\newtheorem{theor}{Theorem}
\theoremstyle{plain}
\theoremstyle{plain}
\theoremstyle{plain}
\newtheorem{thm}{Theorem}[subsection]
\theoremstyle{definition}
\theoremstyle{definition}
\theoremstyle{definition}
\theoremstyle{definition}
\theoremstyle{definition}
\newtheorem{rem}[thm]{Remark}
\theoremstyle{plain}
\newtheorem{prop}[thm]{Proposition}
\theoremstyle{plain}
\newtheorem{lem}[thm]{Lemma}
\theoremstyle{plain}
\newtheorem{cor}[thm]{Corollary}
\theoremstyle{definition}
\theoremstyle{definition}
\theoremstyle{definition}
\theoremstyle{definition}
\theoremstyle{definition}
\numberwithin{equation}{subsection}
\def\F{\mathbb{F}}
\def\deg{{\rm deg}}
\def\ord{{\rm ord}}
\def\dim{{\rm dim}}
\newcommand{\bbP}{{\operatorname{\bf P}}}
\newcommand{\Aut}{\operatorname{Aut}}
\newcommand{\modulo}{\operatorname{mod}}
\def\@seccntformat#1{\csname the#1\endcsname. }
\renewcommand\section{\@startsection {section}{1}{\z@}%
 {-3.5ex \@plus -1ex \@minus -.2ex}%
 {2.3ex \@plus.2ex}%
 {\normalfont\large\bfseries}}
\begin{document}

\title
{\bf Superspecial trigonal curves of genus $5$}
\author
{Momonari Kudo\thanks{Kobe City College of Technology} \thanks{Institute of Mathematics for Industry, Kyushu University.
E-mail: \texttt{m-kudo@math.kyushu-u.ac.jp}}
\ and Shushi Harashita\thanks{Graduate School of Environment and Information Sciences, Yokohama National University.
E-mail: \texttt{harasita@ynu.ac.jp}}}

\maketitle

\begin{abstract}
This paper provides an algorithm enumerating superspecial trigonal curves of genus $5$
over finite fields.
Executing the algorithm over a computer algebra system Magma, we enumerate them over finite fields $\mathbb{F}_{p^a}$ for any natural number $a$ if $p \leq 7$ and for odd $a$ if $p \leq 13$.
\end{abstract}

\section{Introduction}\label{sec:Intro}
Let $p$ be a rational prime.
Let $K$ be a perfect field of characteristic $p$.
Let $\overline{K}$ denote the algebraic closure of $K$.
By a curve, we mean a nonsingular projective variety of dimension $1$.
A curve over $K$ is called {\it superspecial} if its Jacobian
is isomorphic to a product of supersingular elliptic curves over $\overline{K}$.
It is known that $C$ is superspecial if and only if the Frobenius on the first cohomology group $H^1(C, \mathcal{O}_C)$ is zero, where $\mathcal{O}_C$ is the structure sheaf of $C$.
As superspecial curves in characteristic $p$ often descend to maximal curves over $\mathbb{F}_{p^2}$, they are important objects for example in coding theory.

In the series of papers \cite{KH16}, \cite{KH17}, \cite{KH17a} and \cite{KHS17},
we enumerate superspecial curves of genus 4 over small finite fields
and study their automorphism groups.
This paper is the first attempt to obtain an analogous result for genus $5$.
As the whole moduli space of curves of genus $5$ is so huge,
we here restrict ourselves to {\it trigonal} ones.
Here, a curve $C$ is said to be trigonal if there exists a morphism $C \to \mathbf{P}^1$ of degree $3$.

In \cite[Theorem 1.1]{Ekedahl}, Ekedahl proved that $2g \leq p^2-p$ holds if a superspecial curve $C$ of genus $g$ in characteristic $p$ exists.
This implies that there is no superspecial curve of genus $5$ in characteristic $p=2$ ,$3$.
The non-existence holds also for $p=5$. Indeed, by \cite[Lemma 2.2.1]{KH16},
if there were a superspecial curve of genus $5$ in characteristic $5$,
then there would exist a maximal curve of genus $5$ over $\mathbb{F}_{25}$,
but this contradicts the fact due to Fuhrmann and Torres \cite{FT}
that if there exists a maximal curve of genus $g$ over $\mathbb{F}_{p^2}$,
then $4g \leq (p-1)^2$ or $2g=p^2-p$.

In this paper, we enumerate superspecial trigonal curves of genus $g=5$ over finite fields $\mathbb{F}_{p^a}$ for any $a$ if $p \leq 7$
and for odd $a$ if $p \leq 13$.
Since the number of isomorphism classes of superspecial curves
depends on the parity of $a$ (cf.\ \cite[Proposition 2.3.1]{KH17}), it suffices to study the case of $a=1$, $2$ if $p \leq 7$ and the case of $a=1$ if $p \leq 13$.
The main theorems are as follows.
We state them separatedly in characteristic $p=7$, $11$ and $13$ respectively.

\begin{theo}\label{MainTheorem_q7}
There is no superspecial trigonal curve of genus $5$ in characteristic $7$.
\end{theo}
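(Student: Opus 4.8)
The plan is to reduce the statement to a finite verification and to carry it out with the enumeration algorithm announced in the introduction. First, by the reduction recalled in the introduction (cf.\ \cite[Proposition 2.3.1]{KH17}) the enumeration over $\overline{\mathbb{F}_7}$ reduces to the cases $a=1$ and $a=2$; and since a trigonal curve of genus $5$ over $\mathbb{F}_7$ is in particular one over $\mathbb{F}_{49}$, it suffices to prove that there is no superspecial trigonal curve of genus $5$ defined over $\mathbb{F}_{49}$.

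Next I would pin down the geometric model of the objects to be enumerated. A trigonal curve $C$ of genus $5$ is non-hyperelliptic, and the intersection of the quadrics through its canonical model in $\mathbf{P}^4$ is a $2$-dimensional rational normal scroll of degree $3$, whose ruling cuts out the trigonal pencil on $C$. Realising $C$ on the associated Hirzebruch surface $\mathbf{F}_n$ as a trisection $C\sim 3C_\infty+m f$, where $C_\infty$ is the negative section ($C_\infty^2=-n$) and $f$ a fibre, the adjunction formula gives $p_a(C)=2m-3n-2$, so $g=5$ forces $m=(7+3n)/2$; for $m$ to be an integer and for the intersection number $C\cdot C_\infty=m-3n$ to be nonnegative one must have $n=1$ and $m=5$. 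Hence every trigonal curve of genus $5$ is a smooth member of the linear system $|3C_\infty+5f|$ on $\mathbf{F}_1$ --- equivalently, the normalisation of a plane quintic with a single double point, projection from which realises the trigonal map --- and no Maroni-special or cone degeneration arises; conversely, every smooth member of this system is a trigonal curve of genus $5$, the ruling of $\mathbf{F}_1$ restricting to a base-point-free $g^1_3$ on it.

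Now I would run the enumeration itself. One writes the generic equation of a curve in $|3C_\infty+5f|$ on $\mathbf{F}_1$ with indeterminate coefficients, uses the action of $\Aut(\mathbf{F}_1)$ (the stabiliser of a point in $\PGL_3$) to bring it into a normal form with as few free coefficients as possible, and records the open condition that $C$ be smooth. Then one computes the Hasse--Witt matrix of $C$ --- the $5\times 5$ matrix of Frobenius on $H^1(C,\mathcal{O}_C)$ --- explicitly, with entries polynomial in the free coefficients, using the identification of $H^1(C,\mathcal{O}_C)$ with an explicit space of polynomials (via the structure sequence of $C\subset\mathbf{F}_1$ and Serre duality) together with the classical formula that expresses the matrix entries as coefficients of monomials in a suitable power of the defining equation. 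By the cohomological criterion recalled in the introduction, $C$ is superspecial precisely when this matrix vanishes, which is a polynomial system in the free coefficients. Finally one solves this system over $\mathbb{F}_{49}$ --- by a Gr\"obner basis computation, decomposing the solution locus and testing each component against the smoothness condition --- and finds that no smooth solution remains; this proves the theorem.

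The serious obstacle is this last step. For genus $5$ the number of free coefficients surviving the $\Aut(\mathbf{F}_1)$-reduction is larger than in the genus-$4$ situation of \cite{KH16}, so the superspeciality system is sizeable; making the Gr\"obner basis computation over $\mathbb{F}_{49}$ terminate in practice --- through a convenient choice of normal form and monomial order, exploitation of the residual group action, and a split into subcases according to which coefficients vanish --- is where the real effort lies. By contrast the geometric reduction to $\mathbf{F}_1$ and the derivation of the Hasse--Witt matrix are routine once the framework is set up.
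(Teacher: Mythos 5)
Your strategy coincides with the paper's: reduce the characteristic-$7$ statement to the finite field $\mathbb{F}_{49}$, parametrize trigonal curves of genus $5$ by an explicit family with finitely many coefficient normal forms, express the Hasse--Witt matrix as polynomials in those coefficients via coefficients of $F^{p-1}$, and kill the vanishing locus by Gr\"obner-basis computations; as in the paper, the theorem is ultimately a machine verification (Propositions \ref{prop:Case1q7}--\ref{prop:Case5q7}). The only genuine difference is the packaging of the model: you realise $C$ as a smooth trisection in $|3C_\infty+5f|$ on $\mathbf{F}_1$ (your adjunction computation forcing $n=1$, $m=5$ is correct), whereas the paper blows down $C_\infty$ and works with the singular plane quintic $C'=V(F)$ with one double point at $(0:0:1)$ (Lemma \ref{CharacterizationTrigonal}); the two models are equivalent. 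What you underestimate, calling it ``routine'', is exactly the step the paper spends most of Section \ref{section:2} on: in the singular model $H^1(C,\mathcal{O}_C)$ is only a quotient of $H^1(C',\mathcal{O}_{C'})\simeq H^2(\bbP^2,\mathcal{O}_{\bbP^2}(-5))$ by the line $H^0(\delta)$, which must be identified explicitly (it is spanned by $1/(xyz^3)$, resp.\ $1/((x^2-\epsilon y^2)z^3)$), and the rationality type of the double point forces a three-way case division (split node, non-split node, cusp), the non-split case requiring base change to $K[\sqrt{\epsilon}]$ and conjugation by an explicit matrix $P$ before the ``coefficients of $F^{p-1}$'' recipe applies. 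Your $\mathbf{F}_1$ picture does not escape this: the same trichotomy reappears as the splitting behaviour of the degree-$2$ divisor $C\cap C_\infty$ over $\mathbb{F}_{49}$, and both the normal-form list and the Hasse--Witt recipe must account for it. Modulo writing that out, your plan is sound and is essentially the proof the paper gives.
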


\begin{theo}\label{MainTheorem}
Any superspecial trigonal curve of genus $5$ over $\mathbb{F}_{11}$ is $\mathbb{F}_{11}$-isomorphic to the desingularization of
\begin{equation}
x y z^3 + a_1 x^5 + a_2 y^5  = 0 \label{sscurve_F11}
\end{equation}
in $\bbP^2$, where $a_1, a_2 \in \mathbb{F}_{11}^{\times}$, or the desingularization of
\begin{equation}
 (x^2 - 2 y^2) z^3 + a x^5 + b x^4 y + (9 a) x^3 y^2 + 4 b x^2 y^3 + ( 9 a ) x y^4 + 3 b y^5 =0 \label{sscurve_F11_2}
\end{equation}
in $\bbP^2$, where $(a, b) \in (\mathbb{F}_{11})^{\oplus 2} \smallsetminus \{ (0,0 )\}$.
\end{theo}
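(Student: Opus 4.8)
The plan is to follow the method developed for the genus-$4$ case: reduce the classification to an explicit finite-dimensional family of defining equations, translate superspeciality into the vanishing of a Hasse--Witt matrix whose entries are polynomials in the parameters, and solve the resulting system over $\mathbb{F}_{11}$ by a Gr\"obner basis computation.

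The first step is to fix a normal form. Since $g=5$, a trigonal curve carries a unique $g^1_3$, say $|D|$; by Riemann--Roch the residual series $|K_C-D|$ has degree $5$ and projective dimension $2$, it is base-point free (as $C$ carries no $g^2_4$), and the associated morphism is birational onto its image. The image is therefore a plane quintic $\Gamma$, and since the arithmetic genus of a plane quintic is $6$ while $C$ has genus $5$, the curve $\Gamma$ has a single singular point, of $\delta$-invariant $1$ (an ordinary node or an ordinary cusp), and no other singularity. Placing it at $[0:0:1]$ we may write
\[
q(x,y)\,z^3 + g_3(x,y)\,z^2 + g_4(x,y)\,z + g_5(x,y) = 0 ,
\]
with $q\neq 0$ the binary quadratic tangent cone and $g_i$ binary of degree $i$. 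Projection from $[0:0:1]$ is the trigonal map, so $|D|$ is intrinsic, and an $\mathbb{F}_{11}$-isomorphism of such curves amounts to a coordinate change fixing $[0:0:1]$, i.e.\ an element of the parabolic subgroup of $\mathrm{PGL}_3$ stabilizing that point. Using the induced $\mathrm{GL}_2$-action on $(x,y)$ and rescaling the equation I would put $q$ into one of the three standard forms $xy$ (split node), $x^2-2y^2$ (non-split node; note that $2$ is a non-square modulo $11$) and $x^2$ (cusp); then, using the residual stabilizer together with the substitutions $z\mapsto\alpha z+\ell(x,y)$ with $\ell$ linear, I would clear as many coefficients of $g_3,g_4,g_5$ as possible, so that each of the three cases becomes an explicit family with a bounded number of free parameters.

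The second step is to compute the Hasse--Witt operator on these families. For the normalization of a one-nodal plane quintic $F=0$ as above, a basis of $H^0(C,\Omega^1_C)$ is given by the adjoint differentials $G\,dx/F_z$ with $G$ running over the degree-$2$ forms in $x,y,z$ that vanish at $[0:0:1]$, i.e.\ $G\in\langle x^2, xy, y^2, xz, yz\rangle$, a space of dimension $5=g$; the Cartier operator on such differentials is read off in the standard way from the coefficients of $F^{p-1}=F^{10}$. This gives a $5 \times 5$ matrix $H$ with entries in $\mathbb{F}_{11}[\text{parameters}]$, and $C$ is superspecial if and only if $H=0$. Solving the polynomial system $H=0$ over $\mathbb{F}_{11}$ with Magma --- a zero-dimensional system, since superspecial curves lie in finitely many isomorphism classes --- should eliminate the cuspidal case entirely and, in the two nodal cases, leave only finitely many solutions, each of which, after a further coordinate change, takes the form $q(x,y)z^3+g_5(x,y)=0$ with $g_5$ as in \eqref{sscurve_F11} (arising from $q\sim xy$) or in \eqref{sscurve_F11_2} (arising from $q\sim x^2-2y^2$).

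Finally, I would close the loop by checking directly that, for the stated parameter ranges, the curves in \eqref{sscurve_F11} and \eqref{sscurve_F11_2} are irreducible plane quintics whose only singularity is a node at $[0:0:1]$ (hence have geometric genus $5$), are trigonal via projection from that point, and have vanishing Hasse--Witt matrix; and by computing the orbits of the stabilizer subgroup on the finite solution set, which produces the stated parametrizations by $a_1,a_2\in\mathbb{F}_{11}^\times$ and by $(a,b)\in(\mathbb{F}_{11})^{\oplus 2}\smallsetminus\{(0,0)\}$. The main obstacle is the second step: correctly setting up the Cartier operator on the singular plane model (or, equivalently, on the rational normal scroll in $\mathbb{P}^4$ that carries the canonical model) and then carrying out the Gr\"obner elimination, whose cost grows rapidly with the number of free parameters and is the reason such enumerations are feasible only for small $p$. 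A lesser technical point is to confirm that the reduction to a one-singular-point plane quintic has no exceptional cases in genus $5$ --- namely uniqueness of the $g^1_3$ and birationality of $|K_C-D|$ onto its image.
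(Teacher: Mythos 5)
Your proposal is correct in outline and matches the paper's architecture: normalize the trigonal curve as a plane quintic with a unique node or cusp at $(0:0:1)$, split into the three tangent-cone types $xy$, $x^2-\epsilon y^2$, $x^2$, reduce the number of free coefficients by the stabilizer of that point, impose vanishing of a $5\times 5$ Hasse--Witt matrix with polynomial entries, and solve by brute force combined with Gr\"obner bases. The one genuine divergence is in how the superspeciality criterion is set up. You propose the Cartier operator on $H^0(C,\Omega^1_C)$ realized by adjoint differentials $G\,dx/F_z$ with $G$ a conic through the singular point; the paper instead works dually with the Frobenius on $H^1(C,\mathcal{O}_C)$, realized as the quotient of $H^1(C',\mathcal{O}_{C'})\simeq H^2(\bbP^2,\mathcal{O}_{\bbP^2}(-5))$ by the image of $H^0(\delta)$, which it must compute explicitly (it is spanned by $1/(xyz^3)$ in the split-node and cusp cases and by $1/((x^2-\epsilon y^2)z^3)$ in the non-split case). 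Your route buys a directly visible $5$-dimensional basis with no connecting-homomorphism computation, but requires justifying that the classical adjoint-differential description of $H^0(\Omega^1_C)$ and the plane-model Cartier formula apply to the cuspidal as well as the nodal model; the paper's route reuses the ``read off coefficients of $F^{p-1}$'' machinery of its earlier genus-$4$ work, at the cost of the quotient computation and, in the non-split node case, a change-of-basis conjugation by a matrix $P$ defined over $\F_{q}[\sqrt{\epsilon}]$ to keep everything rational over $\F_q$ --- a subtlety your sketch does not address but which your basis of adjoint conics, being already defined over $\F_{11}$, would sidestep. Both criteria are equivalent (the two matrices are transposed Frobenius twists of one another), so either correctly reduces the theorem to the same finite computation; like the paper, your proof ultimately rests on that machine computation, and the points you flag as needing confirmation (uniqueness of the $g^1_3$ for $g\ge 5$, base-point-freeness and birationality of $|K_C-D|$) are exactly the ones the paper establishes in Sections 2.1 and 2.2 via Clifford's theorem and the base-point-free pencil trick.
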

Based on Theorem B, we can find complete representatives of $\mathbb{F}_{11}$-isomorphism classes (resp. $\overline{\mathbb{F}_{11}}$-isomorphism classes)
of superspecial trigonal curves of genus $5$ over $\mathbb{F}_{11}$,
see Proposition \ref{prop:isom}.

\begin{theo}\label{MainTheorem_q13}
There is no superspecial trigonal curve of genus $5$ over $\mathbb{F}_{13}$.
\end{theo}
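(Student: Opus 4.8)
The plan is to proceed exactly as in the characteristic $11$ case treated in Theorem B, but now showing that the search returns no curve. First I would recall the structure theory of trigonal curves: a trigonal curve $C$ of genus $g=5$ admits, via the $g^1_3$, a description through the Maroni invariant, embedding $C$ into a Hirzebruch surface $\mathbf{F}_m$ (a $\mathbf{P}^1$-bundle over $\mathbf{P}^1$) as a divisor in the linear system $|3H + k F|$ for appropriate $m,k$ depending on $g$ and the Maroni invariant. For $g=5$ the possible Maroni invariants are small, so there are only finitely many such families, and in each family $C$ is cut out by an equation of the shape (after using the structure theorem, e.g.\ of the form $\varphi_3(t) z^3 + \varphi_2(t) z^2 + \varphi_1(t) z + \varphi_0(t)=0$ on the surface, or the affine models appearing in \eqref{sscurve_F11} and \eqref{sscurve_F11_2}) with coefficients being binary forms of bounded degree. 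After a change of coordinates on $\mathbf{P}^1$ and on the bundle one normalizes these coefficient forms, reducing to a bounded-dimensional parameter space over $\mathbb{F}_{13}$ (and over a small extension, to also catch curves not defined over $\mathbb{F}_{13}$ but whose $\mathbb{F}_{13}$-twist matters — though for a nonexistence statement over $\mathbb{F}_{13}$ it suffices to run the search over $\mathbb{F}_{13}$, using that superspeciality is geometric).

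Next I would impose the superspeciality condition. Using the criterion quoted in the introduction — $C$ is superspecial iff the Frobenius acts as zero on $H^1(C,\mathcal{O}_C)$ — one translates this, via Serre duality and an explicit basis of $H^0(C,\Omega^1_C)$ coming from the embedding in the Hirzebruch surface (monomials $z^i t^j$ with $(i,j)$ in an explicit polygon, analogous to the genus-$4$ work in \cite{KH16,KH17,KH17a,KHS17}), into the vanishing of the Cartier–Manin (Hasse–Witt) matrix. Concretely, the entries of this $5\times 5$ matrix are the coefficients of certain monomials in the expansion of $f^{p-1}$ (or the appropriate power dictated by the Cartier operator on the surface), where $f$ is the defining polynomial; setting all $25$ entries to zero gives a polynomial system in the normalized coefficients over $\mathbb{F}_{13}$.

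Then I would solve this system. The steps are: (i) enumerate the finitely many families (Maroni strata and normal forms); (ii) for each, set up the Hasse–Witt matrix symbolically in Magma; (iii) compute a Gröbner basis of the ideal generated by its entries together with the non-degeneracy conditions (that $C$ is nonsingular / genuinely trigonal, i.e.\ the leading coefficient forms do not all vanish, discriminant nonzero); (iv) check that the resulting scheme has no $\overline{\mathbb{F}_{13}}$-points, i.e.\ the ideal contains $1$ (or, where radical membership is delicate, that the only solutions violate a non-degeneracy condition). When the Gröbner basis computation is too large, I would stratify further by additional coordinate normalizations or by case-splitting on which coefficients are zero, exactly the strategy used to make the genus-$4$ and the $\mathbb{F}_{11}$ genus-$5$ computations feasible.

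The main obstacle I expect is computational rather than conceptual: ensuring the case analysis over $\mathbb{F}_{13}$ is exhaustive (no Maroni stratum or degenerate subfamily overlooked) and that each Gröbner basis computation actually terminates in reasonable time and memory, since the coefficient fields are larger than for $p=7,11$ and the systems are dense. A secondary subtlety is bookkeeping the automorphisms used to normalize coefficients, so that ``no solution in normal form'' genuinely implies ``no superspecial trigonal curve'' rather than merely ``none in the chosen slice''; this is handled by making each normalization a genuine quotient by the relevant subgroup of $\mathrm{PGL}$ acting on $\mathbf{P}^1$ and on the Hirzebruch surface. Once all strata return the unit ideal (after discarding degenerate loci), we conclude that there is no superspecial trigonal curve of genus $5$ over $\mathbb{F}_{13}$, proving Theorem D.
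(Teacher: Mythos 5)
Your overall strategy is the same as the paper's: put the trigonal curve into a normal form with finitely many parameters, express superspeciality as the vanishing of a $5\times 5$ matrix whose entries are coefficients of $F^{p-1}$, and exhaust the parameter space by a hybrid brute-force/Gr\"obner-basis search over Magma. The choice of model is only cosmetically different: for $g=5$ the Maroni invariant is forced to equal $1$, so there is exactly one Hirzebruch stratum, namely $\mathbf{F}_1=\mathrm{Bl}_{pt}\mathbf{P}^2$, and blowing down recovers precisely the paper's model of a plane quintic with a unique double point. The case analysis that actually matters is therefore not a choice of Maroni stratum but the arithmetic type of that double point (split node, non-split node, cusp), which governs both the normal forms of Section \ref{SectionReduction} and the form of the superspeciality criterion; your outline does not surface this trichotomy. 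Note also that the uniqueness of the $g^1_3$ for $g\ge 5$ (Section \ref{UniquenessPencil}) is what guarantees the singular point is $\mathbb{F}_{13}$-rational and the whole normalization can be done over the base field, which disposes of your worry about twists.

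The genuine gap is in the superspeciality criterion itself. You assert that the Cartier--Manin/Hasse--Witt entries are ``coefficients of certain monomials in $f^{p-1}$'' without identifying them, and this identification is the technical heart of the proof: because the plane model $C'$ is singular, $H^1(C,\mathcal{O}_C)$ is only a quotient of $H^1(C',\mathcal{O}_{C'})\cong H^2(\mathbf{P}^2,\mathcal{O}_{\mathbf{P}^2}(-5))$, and one must compute the image of $H^0(\delta)$ (generated by $1/(xyz^3)$, resp.\ $1/((x^2-\epsilon y^2)z^3)$) before one can say which $25$ coefficients of $F^{p-1}$ must vanish (Proposition \ref{HW_ssp_non-split} and Corollary \ref{Criterion_ssp_non-split}); in the non-split node case one further needs the base change $H=P^{(p)}H'P^{-1}$ of \eqref{CCM}--\eqref{BasisForH'}, since the natural eigenbasis lives over $\mathbb{F}_{13}(\sqrt{\epsilon})$. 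Equivalently, in your $\mathbf{F}_1$ picture you would have to work out the Cartier operator on the surface rather than quote the plane-curve formula. Without pinning these equations down the polynomial system cannot be written, let alone solved. Finally, since the statement is a nonexistence result established by an exhaustive computation, the search itself (with field equations $a_i^{13}-a_i$ adjoined so that one tests for $\mathbb{F}_{13}$-points, not $\overline{\mathbb{F}_{13}}$-points) is the proof; your proposal describes it but does not carry it out, whereas the paper records the explicit parameter splittings and Magma runs in Propositions \ref{prop:Case1q13}--\ref{prop:Case5q13}.
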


Here, let us describe our strategy to prove Theorems \ref{MainTheorem_q7}, \ref{MainTheorem} and \ref{MainTheorem_q13}.
First, we parametrize trigonal nonsingular curves of genus $5$ by quintic plane curves.
Concretely, it is shown that each trigonal curve $C$ of genus $5$ over $K$ is the desingularization of a quintic $C^{\prime}=V(F)$ with a unique singular point for some quintic form $F \in K [x,y,z]$.
It is also shown that the quintic forms defining our curves are divided into three types.

Second, we present a criterion for the superspeciality of trigonal curves of genus $5$.
Specifically, we give a method to compute their Hasse-Witt matrices, which are representation matrices for the Frobenius on the first cohomology group $H^1 ( C, \mathcal{O}_C )$.
As we have an explicit defining equation $F(x,y,z)=0$ of the singular model $C^{\prime}$, we can compute its Hasse-Witt matrix by known methods, e.g., \cite[Section 5]{Kudo}.
However, there is a gap between the Frobenius on $H^1 ( C^{\prime}, \mathcal{O}_{C^{\prime}} )$ and that on $H^1 ( C, \mathcal{O}_C )$ since $H^1 ( C, \mathcal{O}_{C} )$ is realized as a quotient space of $H^1 ( C^{\prime}, \mathcal{O}_{C^{\prime}} ) \cong H^2(\bbP^2, {\mathcal O}_{\bbP^2}(-5))$.
To solve this problem, we give an explicit basis of the quotient space of $H^2(\bbP^2, {\mathcal O}_{\bbP^2}(-5))$, and represent its image elements by the Frobenius as linear combinations of the basis.
Thanks to these explicit representations, it is proved that the Hasse-Witt matrix of $C$ is determined from certain coefficients in $F^{p-1}$.
This is an analogous result to previous works on computing Hasse-Witt matrices or Cartier-Manin matrices of algebraic curves (cf.\ \cite{Yui} for the Cartier-Manin matrices of hyperelliptic curves, and \cite[Section 4]{Kudo2} for the Hasse-Witt matrices of complete intersections).

Third, we give an algorithm for enumerating singular models $V(F)$ of superspecial trigonal curves of genus $5$.
Regarding unknown coefficients of quintic forms $F$ as indeterminates, we reduce enumerating them into solving multivariate systems over $K$ derived from our criterion for the superspeciality.
To solve the multivariate systems, we apply the {\it hybrid method}~\cite{BFP}, which mixes the brute-force on some coefficients with Gr\"{o}bner bases techniques.
In fact, our algorithm doubly uses the brute-force on coefficients, similarly to the algorithms for the genus $4$-case given in \cite{KH17} and \cite{KH17a}.
For each root, we substitute it into unknown coefficients of $F$, and decide whether $C^{\prime} = V( F)$ has a unique singularity or not.
In this way, one can collect quintic curves $V(F)$ such that their desingularizations are superspecial trigonal curves of genus $5$.

Moreover, we also give reductions of the defining equations of the singular models of $C$.
Since the number of variables deeply affects the efficiency of solving multivariate systems (see, e.g., \cite{Ayad}), reducing the number of unknown coefficients in $F$ is quite important for our computational enumeration to finish in practical time.
Using techniques similar to the reduction methods in \cite{KH16}, \cite{KH17} and \cite{KH17a}, we shall reduce the number as much as possible for each type of the quintic forms.

Executing our enumeration algorithm\footnote{We implemented the algorithm on Magma V2.22-7. All the source codes and log files are available at the web page of the first author \cite{HPkudo}.} over Magma~\cite{Magma}, we have succeeded in finishing all the computation to enumerate all superspecial trigonal curves of genus $5$ over $\mathbb{F}_q$ for $q=49$, $11$ and $13$.

This paper is organized as follows.
In Section \ref{section:2}, we recall some basic facts on trigonal curves
and study a (singular) model $C^{\prime}$ in $\bbP^2$ of each trigonal curve $C$ of genus $5$.
After we realize $H^1(C,{\mathcal O}_C)$ as a quotient of
$H^2(\bbP^2, {\mathcal O}_{\bbP^2}(-5))$,
we present a method to determine the Hasse-Witt matrix of $C$,
in particular we get an algorithm determining whether $C$
 is superspecial or not.
In Section \ref{SectionReduction}, we study reductions of the defining equations of the models in $\bbP^2$ obtained in Section \ref{section:2} in order to reduce the number of
unknown coefficients of defining equations.
This step is indispensable for making our computational enumeration efficient.
After these preparations, the main theorems will be proved in Section \ref{sec:main_results}.
Finally we study in Section \ref{sec:aut} the automorphism groups of
the superspecial trigonal curves obtained in the main theorem(s)
and the compatibility with the enumeration by the Galois cohomology theory.

\subsection*{Acknowledgments}
This work was supported by JSPS Grant-in-Aid for Scientific Research (C) 17K05196.

\section{Trigonal curves of genus $5$}\label{section:2}
In this section, we parametrize trigonal
nonsingular curves of genus $5$ by quintic plane curves,
preserving the base field.
In the last subsection,
we give an algorithm determining whether $C$ is superspecial,
depending on the type of the defining equation of each quintic plane curve.

\subsection{General facts on trigonal curves}\label{UniquenessPencil}
Let $C$ be a trigonal nonsingular curve of genus $g$.
Let $\pi: C \to \bbP^1$ be a morphism of degree $3$.
For the reader's convenience, we here recall a proof of the fact that
$C$ is not hyperelliptic if $g\ge 3$ and
the pencil $\pi$ is unique if $g\ge 5$.

Let $\rho: C\to\bbP^1$ be a morphism of degree $d=2$ or $3$.
Note that $\rho$ is associated to
 a linear subspace $M \subset H^0(C,{\mathcal M})$ with $\dim M = 2$
for some invertible sheaf $\mathcal M$ on $C$ of degree $d$.
If $g\ge 3$, then $M$ has to be complete, i.e., $M = H^0(C,{\mathcal M})$
by  the inequality of Clifford's theorem
$h^0(C,{\mathcal M}) -1 \le \deg({\mathcal M})/2$ (cf. \cite[Theorem 5.4]{Har}),
since $\mathcal M$ is special (cf. \cite[Example 1.3.4]{Har}) by Riemann-Roch
\[
h^0(\omega_C\otimes {\mathcal M}^{-1})
=h^0(C,{\mathcal M}) - \deg({\mathcal M}) -1 + g 
> 0.
\]
Let $\mathcal L$ be the invertible sheaf on $C$
making the pencil $\pi: C \to \bbP^1$ chosen first.


Suppose that
$\mathcal M$ is distinct from $\mathcal L$ (if $d=3$)
and that $g \ge 3$ if $d=2$ and that $g\ge 5$ if $d=3$.
We shall show that the existence of such a $\mathcal M$ makes a contradiction.
Let $s_1, s_2$ be linearly independent sections of ${\mathcal L}$.
Applying the Base-Point-Free Pencil Trick \cite[Chap.~III, \S~3, p.126]{ACGH} to $\mathcal L$, $\mathcal M$ and $s_1,s_2$,
we have an exact sequence
\[
\begin{CD}
0 @>>> H^0(C,{\mathcal L}^{-1}\otimes{\mathcal M}) @>>> H^0(C,{\mathcal M})^{\oplus 2}@>(s_1,s_2)>>H^0(C,{\mathcal L}\otimes{\mathcal M}).
\end{CD}
\]
As $\mathcal L$ and $\mathcal M$ are distinct, we get $h^0(C,{\mathcal L}^{-1}\otimes{\mathcal M})=0$ and therefore $h^0(C,{\mathcal L}\otimes{\mathcal M}) \ge 4$,
and in particular ${\mathcal L}\otimes{\mathcal M}$ is special by Riemann-Roch.
Then by the inequality of Clifford's theorem 
again, we have
\begin{equation}\label{CliffordThm}
h^0(C,{\mathcal L}\otimes{\mathcal M})-1 \le \frac{3+d}{2}.
\end{equation}
If $d=2$, then this is absurd, whence
there does not exist such a $\mathcal M$, i.e., $C$ is not hyperelliptic.
Also if $d=3$, so is this.
Indeed by Clifford's theorem 
the inequality \eqref{CliffordThm} has to be strict, since $C$ is nonhyperelliptic and
${\mathcal L}\otimes{\mathcal M}$ is
not the structure sheaf nor the canonical sheaf by looking at its degree.


\subsection{Quintic models}\label{TrigonalCurvesGenus5}
Let $p$ be a rational prime.
Assume $p\ne 2$.
Let $K$ be a field of characteristic $p$.
To each trigonal curve $C$ over $K$,
we can associate a quintic in $\bbP^2_K$.

\begin{lem}\label{CharacterizationTrigonal}
\begin{enumerate}
\item[\rm (1)]
Let $C$ be a trigonal curve of genus $5$ over $K$.
Then $C$ is the desingularization of
a quintic $C'$ in $\bbP^2_K$ 
such that $C'$ has a unique singular point, which is a $K$-rational point and has multiplicity $2$, i.e., the singular point is either a node or a cusp.
Moreover
\begin{enumerate}
\item[\rm (i)]
If $C'$ has a node,
$C'$ can be written as $V(F)$ in $\bbP^2$
for a single irreducible polynomial
\[
F =
\begin{cases}
 xyz^3 + f \qquad\text{or}\\
 (x^2-\epsilon y^2)z^3 + f,
\end{cases}
\]
where $\epsilon \in K \smallsetminus (K^\times)^2$
and $f$ is the sum of monomial terms which can not be divided by $z^3$.
\item[\rm (ii)]
If $C'$ has a cusp,
$C'$ can be written as $V(F)$ in $\bbP^2$
for a single irreducible polynomial
\[
F =
 x^2z^3 + f,
\]
where $f$
is the sum of monomial terms which can not be divided by $z^3$ and
$f$ contains non-zero $y^3z^2$-term.
\end{enumerate}
\item[\rm (2)]
Conversely, for $C'$ as in {\rm (i)} or {\rm (ii)}
above with a single singular point, its desingularization is a trigonal curve of genus $5$.
\item[\rm (3)]
There is a canonical isomorphism
from $\Aut_K(C)$ to $\{\mu \in \Aut_K(\bbP^2) \mid \mu(C') = C'\}$.
\end{enumerate}
\end{lem}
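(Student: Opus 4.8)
The plan is to manufacture the quintic model from one canonically available line bundle on $C$ and then classify the resulting plane singularity by a short local computation. Write $g=5$ throughout. Fix a trigonal pencil $\pi\colon C\to\bbP^1$ and let $\mathcal L$ be the degree-$3$ invertible sheaf with $h^0(C,\mathcal L)=2$ it determines, and put $\mathcal M:=\omega_C\otimes\mathcal L^{-1}$. Then $\deg\mathcal M=2g-2-3=5$, and Riemann--Roch together with Serre duality gives $h^0(C,\mathcal M)=\deg\mathcal M+1-g+h^0(C,\mathcal L)=3$. I would first show $|\mathcal M|$ is base-point-free: a base point $P$ would force $h^0(C,\mathcal M(-P))=3$, and Riemann--Roch applied to $\mathcal M(-P)$ (of degree $4$) would then force $h^0(C,\mathcal L(P))=3$ with $\mathcal L(P)$ effective and special; since $C$ is not hyperelliptic (Subsection~\ref{UniquenessPencil}), Clifford's theorem applies \emph{strictly} and yields $h^0(C,\mathcal L(P))\le 2$, a contradiction. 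Hence $\phi:=\phi_{|\mathcal M|}\colon C\to\bbP^2$ is a morphism with non-degenerate image, and since $\deg\phi\cdot\deg\phi(C)=5$ with $\deg\phi(C)\ge2$, the primality of $5$ forces $\deg\phi=1$; so $C':=\phi(C)$ is a plane quintic and $\phi\colon C\to C'$ is its desingularization. Since $p_a(C')=\binom{4}{2}=6$, we get $\sum_P\delta_P=p_a(C')-g(C)=1$, so $C'$ has a single singular point $P_0$ with $\delta_{P_0}=1$, hence a node or an ordinary cusp, of multiplicity $2$ (in characteristic $\ne2$ these are the only $\delta=1$ plane-curve singularities).

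Next I would put the equation in normal form. As $K$ is perfect and $P_0$ is the unique singular \emph{geometric} point of $C'$, it is fixed by $\Gal(\overline K/K)$, hence $K$-rational; moving it to $[0:0:1]$ by an element of $\Aut_K(\bbP^2)$, the defining polynomial becomes $F=F_2(x,y)z^3+F_3(x,y)z^2+F_4(x,y)z+F_5(x,y)$ with $\deg F_i=i$ and $F_2\ne0$ (vanishing of the $z^5$- and the $z^4$-linear terms records that $P_0$ has multiplicity $\ge2$, and $F_2\ne0$ that it is exactly $2$). The tangent cone $V(F_2)\subset\bbP^1$ is Galois-stable: in the node case it is a reduced pair of points, which after an element of $\Aut_K(\bbP^2)$ acting on $x,y$ and a rescaling of $F$ becomes $F_2=xy$ if both points are $K$-rational, or $F_2=x^2-\epsilon y^2$ with $\epsilon\notin(K^\times)^2$ if the two points are conjugate; in the cusp case it is a double $K$-rational point, giving $F_2=x^2$. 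Finally, for the cusp I would complete the square in $x$ (legitimate since $p\ne2$ and the $z^3x^2$-coefficient of $F$ is a local unit at $P_0$): this identifies the local ring with that of $x^2-y^3$ exactly when the $y^3z^2$-coefficient of $F$ is nonzero, and $\delta_{P_0}=1$ forces it to be nonzero. Setting $f:=F_3z^2+F_4z+F_5$ and noting that $F$ is irreducible because $C$, hence $C'$, is irreducible, this proves~(1).

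For~(2), the same local computation shows that the unique singular point of a $C'$ as in (i) or (ii) has $\delta$-invariant $1$ (a node when $F_2$ is reduced, an $A_2$-cusp when $F_2=x^2$ and the $y^3z^2$-term is nonzero), so its desingularization $C$ has genus $6-1=5$; and the projection $[x:y:z]\mapsto[x:y]$ from $P_0=[0:0:1]$, being a rational map to $\bbP^1$, lifts to a morphism $C\to\bbP^1$ of degree $\deg C'-\operatorname{mult}_{P_0}(C')=3$ (the pull-back of the pencil of lines through $P_0$ is a sub-pencil of $|\mathcal M|$ whose base locus has degree $2$), so $C$ is trigonal. For~(3), since $g=5$ the pencil $\pi$, hence $\mathcal L$, hence $\mathcal M=\omega_C\otimes\mathcal L^{-1}$, is canonically attached to $C$ (Subsection~\ref{UniquenessPencil}); therefore every $\sigma\in\Aut_K(C)$ satisfies $\sigma^*\mathcal M\cong\mathcal M$, acts $K$-linearly on $H^0(C,\mathcal M)$ up to a scalar, and induces $\bar\sigma\in\Aut_K(\bbP^2)$ with $\phi\circ\sigma=\bar\sigma\circ\phi$, hence $\bar\sigma(C')=C'$. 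The resulting homomorphism $\Aut_K(C)\to\{\mu\in\Aut_K(\bbP^2):\mu(C')=C'\}$ is injective because $\phi$ is birational onto $C'$, and surjective because any such $\mu$ restricts to a $K$-automorphism of $C'$ that lifts uniquely to the normalization $C$; one checks it does not depend on the choices made.

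The conceptual core is the claim that $|\mathcal M|$ is base-point-free and birational onto its image, which I expect to be the cleanest part once Clifford's theorem and the non-hyperellipticity of $C$ are in hand. The fussy part, and the one I expect to demand the most care, is the bookkeeping for the normal forms: verifying that the only $\delta=1$ plane-curve singularities in odd characteristic are the node and the ordinary cusp, that $P_0$ and (the reduced support of) its tangent cone descend to $K$ or split over a quadratic extension, and that the $y^3z^2$-coefficient is nonzero precisely in the $A_2$ case. All of this is routine but needs to be spelled out carefully.
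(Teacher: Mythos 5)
Your proposal is correct, and its skeleton coincides with the paper's: both map $C$ to $\bbP^2$ by the complete linear system $|\omega_C\otimes\mathcal L^{-1}|$, prove base-point-freeness by the same strict-Clifford contradiction (you apply Clifford to $\mathcal L(P)$, the paper to $\omega_C\otimes\mathcal L^{-1}(-P)$; these are Serre-dual versions of one computation), and reduce the tangent cone at the singular point to $xy$, $x^2-\epsilon y^2$ or $x^2$ in the same way. You diverge genuinely in two places. First, in the analysis of the singularity: the paper forces a unique multiplicity-$2$ point from the inequality $g\le\binom{d-1}{2}-\sum_P\binom{m_P}{2}$ and then, in the cusp case, rules out a vanishing $y^3z^2$-coefficient by exhibiting two explicit elements ($xg/y^2$ and $xg/y$) of the cokernel $\delta$ of $\mathcal O_{C'}\to\mathcal O_C$, which would drop the genus to $4$; you instead run the arithmetic $\sum_P\delta_P=p_a(C')-g=1$ and invoke the classification of $\delta=1$ double points in odd characteristic via completing the square. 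Your route is cleaner and makes the cusp condition conceptual, at the cost of the blow-up/Weierstrass bookkeeping you flag; the paper's explicit local elements are not gratuitous, since essentially the same cocycles reappear in the proof of the subsequent proposition computing the image of $H^0(\delta)$ in $H^2(\bbP^2,\mathcal O_{\bbP^2}(-5))$. Second, for part (2) the paper argues cohomologically that $\omega_C\otimes\varphi^*\mathcal O_{\bbP^2}(1)^{-1}$ is a base-point-free $g^1_3$ via Riemann--Roch and Clifford, whereas you project from the singular point and read off the degree as $5-2=3$, recovering the genus directly as $6-\delta$; both are valid. Part (3) is the same argument. I see no gap.
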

\begin{proof}
(1) Let $C$ be a nonsingular trigonal curve of genus $5$ over $K$.
Let $\mathcal L$ be an invertible sheaf making a pencil $C \to \bbP^1$ of degree $3$.
The uniqueness of such $\mathcal L$ (Section \ref{UniquenessPencil}) implies that
$\mathcal L$ is defined over the ground field $K$.

Let $\omega_C$ denote the canonical sheaf on $C$.
We have $\deg(\omega_C\otimes {\mathcal L}^{-1}) = (2g-2)-3 = 5$ and Riemann-Roch for ${\mathcal L}$ shows
$h^0(\omega_C\otimes {\mathcal L}^{-1})=h^0({\mathcal L})-\deg({\mathcal L})-1+g=3$.
Moreover $\omega_C\otimes {\mathcal L}^{-1}$ has no base-point.
Indeed, otherwise, let $\mathcal M$ be the invertible sheaf whose divisor is obtained
by subtracting the base-point from the divisor of $\omega_C\otimes {\mathcal L}^{-1}$, then $\deg(\mathcal M)=4$ but still $h^0({\mathcal M})=3$.
This contradicts Clifford's theorem,
as $C$ is nonhyperelliptic and $\mathcal M$ is special but is not $\omega_C$ nor $\mathcal O_C$.
The linear system of $\omega_C\otimes {\mathcal L}^{-1}$ defines
$\varphi: C \to \bbP^2$.

Let $C'$ denote the image of $\varphi$.
Note that $C'$ is a curve of degree $5$ in $\bbP^2$.
Let $\delta$ be the cokernel of ${\mathcal O}_{C'} \to {\mathcal O}_{C}$.
The short exact sequence
\begin{equation}\label{StandardExactSquence}
\begin{CD}
0 @>>> {\mathcal O}_{C'} @>>> {\mathcal O}_{C} @>>> \delta @>>> 0
\end{CD}
\end{equation}
gives a basic exact short sequence
\begin{equation}\label{StandardExactSquenceCoh}
\begin{CD}
0 @>>> H^0(\delta) @>>> H^1(C',{\mathcal O}_{C'}) @>>> H^1(C,{\mathcal O}_C) @>>> 0.
\end{CD}
\end{equation}

Recall the general fact (cf. \cite[8.3, Corollary 1. on p.\,103]{Fulton})
that for any plane curve $C$ of degree $d$ and geometric genus g we have
\[
g \le \frac{(d-1)(d-2)}{2} - \sum_P \frac{m_P(m_P-1)}{2}
\]
where $m_P$ is the multiplicity of $C'$ at $P$ (cf. \cite[3.1, p.\,32]{Fulton}).
This implies that $C'$ has a single singular point having
multiplicity $2$,
since otherwise the geometric genus of $C'$ less than $5$.
By using a $K$-automorphism of $\bbP^2$,
the singular point is assumed to be at $(0,0,1)$ and therefore
an equation $F$ defining $C'$ has to be of the form
\[
F = Qz^3 + f,
\]
where $Q$ is a quadratic form in $x,y$ and
$f$ is the sum of monomial terms which can not be divided by $z^3$.

If $Q$ is non-degenerate (node case),
there exists a linear transform in $x, y$ which sends $Q$
to $xy$ in the split case and to $x^2-\epsilon y^2$ in the non-split case
for some $\epsilon \in K \smallsetminus (K^\times)^2$.

If $Q$ is degenerate (cusp case),
there exists a linear transform in $x, y$ which sends $Q$
to $x^2$.
Note that $C$ is normalization of $C'$.
If the coefficient of $y^3z^2$ were zero, then
one could write
\[
F=x^2 g + xy^2 u + y^4 v,
\]
where $g,u,v\in K[x,y,z]$ with $y \nmid g$.
Since 
\[
\left(\frac{xg}{y^2}\right)^2 + u \left(\frac{xg}{y^2}\right) + v g = \frac{g}{y^4} F
\]
is zero on $C'$, the cokernel $\delta$ in \eqref{StandardExactSquence} contains
the two-dimensional space generated by the images of
$xg/y^2$ and $xg/y$.
Then the genus of $C$ is less than or equal to $4$
by \eqref{StandardExactSquenceCoh}.

Assume the coefficient of $y^3z^2$ is not zero.
Then at least two of $x^2$, $xy^2$ or $y^3$
have the same order in discrete valuation ring ${\mathcal O}_{C,(0:0:1)}$
as the minimal order among monomials in $F$.
From this we get $\ord (x) > \ord(y)$, whence
$x^2$ and $y^3$ have the same mimimal order.
This implies that $\delta$ is of length one
which is generated by the class of $x/y$.

(2) Let $C'\subset\bbP^2$ be as in (i) or (ii) above.
Consider the normalization $C\to C'$ of $C'$.
We denote by $\varphi$ the composition $C \to C' \to \bbP^2$.
Set ${\mathcal M} = \varphi^*{\mathcal O}_{\bbP^2}(1)$.
As ${\mathcal M}$ is of degree $5$, we have $\deg(\omega_C\otimes{\mathcal M}^{-1})=3$.
By Riemann-Roch, $h^0({\mathcal M})-h^0(\omega_C\otimes{\mathcal M}^{-1})=1$.
As $h^0({\mathcal M})\ge h^0({\mathcal O}_{\bbP^2}(1))=3$,
we have $h^0(\omega_C\otimes{\mathcal M}^{-1}) \ge 2$.
Hence ${\mathcal M}$ is a special divisor.
By Clifford's theorem 
$h^0({\mathcal M}) \le \deg({\mathcal M})/2+1=7/2$.
Hence $h^0({\mathcal M})=3$ and $h^0(\omega_C\otimes{\mathcal M}^{-1})=2$.
Thus $\omega_C\otimes{\mathcal M}^{-1}$ is a $g_3^1$,
which is base-point-free by Clifford's theorem in the same argument as in 
the second paragraph of the proof of (1).

(3) Let $\nu$ be an automorphism of $C$ over $K$.
By the uniqueness of $\mathcal L$,
this canonically induces an automorphism $\mu$ of
two dimensional projective space $|\omega_C\otimes {\mathcal L}^{-1}|$,
which stabilizes $C'$.
Conversely let $\mu$ be an automorphism of $\bbP^2$ stabilizing $C'$.
Let $\tau: C \to C'$ be the normalization. 
By the universal property of the nomalization,
the composition $\mu\circ \tau: C \to C'$ factors as $\tau\circ \nu$ for a morphism $\nu: C \to C$.
Clearly $\nu$ is an automorphism.
\end{proof}

\begin{prop}
Let $C'$ be of the form above.
The image of the homomorphism in \eqref{StandardExactSquenceCoh}
\[
H^0(\delta) \to H^1(C',{\mathcal O}_{C'}) \simeq H^2(\bbP^2, {\mathcal O}_{\bbP^2}(-5))
\]
is the one-dimensional $K$-vector space generated by
$\displaystyle \frac{1}{xyz^3}$ in the split node case
and in the cusp case and
by $\displaystyle \frac{1}{(x^2-\epsilon y^2)z^3}$ in the non-split node case.
\end{prop}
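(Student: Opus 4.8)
The plan is to combine the description of $\delta$ from the proof of Lemma~\ref{CharacterizationTrigonal} with Grothendieck--Serre duality on $C'$, and to make everything concrete only at the end using the monomial description of $H^2(\bbP^2,{\mathcal O}_{\bbP^2}(-5))$.

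First, the proof of Lemma~\ref{CharacterizationTrigonal} shows that in each of the three cases $\delta$ is a skyscraper sheaf at $P=(0:0:1)$ of length one; hence by \eqref{StandardExactSquenceCoh} the image of $H^0(\delta)$ in $H^1(C',{\mathcal O}_{C'})\simeq H^2(\bbP^2,{\mathcal O}_{\bbP^2}(-5))$ is a one-dimensional $K$-subspace, and it suffices to identify the hyperplane of quadratic forms it annihilates under Serre duality.

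Next I would set up the duality. Since $C'$ is a plane quintic, $\omega_{C'}\cong{\mathcal O}_{C'}(2)$, and as $H^0(\bbP^2,{\mathcal O}_{\bbP^2}(-3))=H^1(\bbP^2,{\mathcal O}_{\bbP^2}(-3))=0$ restriction identifies $H^0(C',\omega_{C'})$ with the six-dimensional space of quadratic forms in $x,y,z$. Under Serre duality the surjection $H^1(C',{\mathcal O}_{C'})\twoheadrightarrow H^1(C,{\mathcal O}_C)$ of \eqref{StandardExactSquenceCoh} is dual to the inclusion $H^0(C,\omega_C)\hookrightarrow H^0(C',\omega_{C'})$ induced by $\nu_*\omega_C={\mathcal H}om_{{\mathcal O}_{C'}}(\nu_*{\mathcal O}_C,\omega_{C'})\hookrightarrow\omega_{C'}$, and this identifies $H^0(C,\omega_C)$ with the quadrics that lie, locally at $P$, in the conductor of ${\mathcal O}_{C'}\hookrightarrow{\mathcal O}_C$. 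A local computation --- the cusp case is exactly the one already made in the proof of Lemma~\ref{CharacterizationTrigonal}, and the node cases (split or not) are analogous --- shows this conductor is the maximal ideal ${\mathfrak m}_P$. Hence $H^0(C,\omega_C)$ is the hyperplane of quadratic forms vanishing at $P=(0:0:1)$, i.e.\ those with zero $z^2$-coefficient, spanned by $x^2,xy,y^2,xz,yz$, and the image of $H^0(\delta)$ is exactly its annihilator.

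Finally I would make the pairing explicit. Identifying $H^1(C',{\mathcal O}_{C'})$ with $H^2(\bbP^2,{\mathcal O}_{\bbP^2}(-5))$ via $0\to{\mathcal O}_{\bbP^2}(-5)\xrightarrow{F}{\mathcal O}_{\bbP^2}\to{\mathcal O}_{C'}\to0$ and describing the latter by the \v{C}ech symbols $\frac{1}{x^iy^jz^k}$ with $i+j+k=5$, $i,j,k\ge1$, the duality pairing with quadratic forms is the residue pairing, for which $\frac{1}{x^iy^jz^k}$ is dual to $x^{i-1}y^{j-1}z^{k-1}$; so the annihilator of $\langle x^2,xy,y^2,xz,yz\rangle$ is the line spanned by the symbol dual to $z^2$, namely $\frac{1}{xyz^3}$, which settles the split-node and cusp cases. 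For the non-split node one passes to $\overline K$, where $x^2-\epsilon y^2=(x-\sqrt\epsilon\,y)(x+\sqrt\epsilon\,y)$ splits into the two tangent lines of the node, runs the same argument with the \v{C}ech cover adapted to these two lines, obtains the symbol $\frac{1}{(x^2-\epsilon y^2)z^3}$, and observes that it is Galois-stable, so descends to $K$ and represents the same line --- this is the form convenient for the Hasse--Witt computation of the next subsection. The main obstacle is the bookkeeping in this last step: verifying that Serre duality is compatible with the boundary isomorphism and with the monomial basis so that the relevant pairing really is the residue pairing, and making precise that $\frac{1}{(x^2-\epsilon y^2)z^3}$ denotes a genuine \v{C}ech class equal to a nonzero multiple of $\frac{1}{xyz^3}$. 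A duality-free alternative is to compute the connecting map directly: in the cusp case the class of $x/y$ generates $\delta_P$ and satisfies $x/y=\frac{-f}{xyz^3}=\frac{x}{y}-\frac{F}{xyz^3}$ on $C'$, so the boundary map applied to its \v{C}ech cocycle returns $\pm\frac{1}{xyz^3}$, and the node cases follow the same pattern with the leading form $Qz^3$ of $F$ in the denominator.
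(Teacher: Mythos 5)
Your proposal is correct in substance but takes a genuinely different route from the paper. The paper proves the proposition by computing the connecting homomorphism of \eqref{StandardExactSquence} directly: it exhibits an explicit generator of $H^0(\delta)$ (the class of $u/f$ in the split node case, of $xg/(yz^3)$ in the cusp case, each shown to be integral over $U_z$), lifts it to a \v{C}ech $0$-cochain $(0,0,\alpha_z)$, takes differences to get a $1$-cocycle for ${\mathcal O}_{C'}$, and observes that the resulting \v{C}ech coboundary equals $F/(xyz^3)$, so that dividing by $F$ lands on $1/(xyz^3)$ in $H^2(\bbP^2,{\mathcal O}_{\bbP^2}(-5))$; the non-split case is handled over $E=K[\sqrt{\epsilon}]$ with the cover adapted to the two tangent lines and then descended. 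Your main argument instead identifies the image as the Serre-dual annihilator of the adjoint quadrics (those lying in the conductor ${\mathfrak m}_P$), i.e.\ of the quadrics through $P=(0:0:1)$, whose annihilator in the monomial residue pairing is the line dual to $z^2$. This is more conceptual---it explains uniformly in all three cases why the answer is the monomial dual to $z^2$ (the unique adjoint condition is ``pass through the singular point'')---at the cost of the duality bookkeeping you acknowledge: the compatibility of the trace map with the boundary isomorphism $H^1(C',{\mathcal O}_{C'})\simeq H^2(\bbP^2,{\mathcal O}_{\bbP^2}(-5))$ and with the monomial basis. Your ``duality-free alternative'' at the end is essentially a compressed version of the paper's actual proof, so either branch of your plan would work.

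One caveat on the non-split case. Your duality argument yields the $K$-line $K\cdot\frac{1}{xyz^3}$ there as well, and you then assert that $\frac{1}{(x^2-\epsilon y^2)z^3}$ is a nonzero multiple of $\frac{1}{xyz^3}$. That is true over $E$, but the scalar lies in $\sqrt{\epsilon}\,K^{\times}$, not in $K^{\times}$: the class $\frac{1}{XYz^3}$ taken with respect to the ordered cover $(U_X,U_Y,U_z)$ is Galois \emph{anti}-invariant, since conjugation swaps the first two opens and so introduces a sign on alternating cochains. Hence the literal $K$-span of that symbol is not the $K$-rational image; the correct statement is that the image equals $K\cdot\frac{1}{xyz^3}$ in all three cases, and $\frac{1}{(x^2-\epsilon y^2)z^3}$ is a generator of its base change to $E$ written in the cover used for the Hasse--Witt computation in \eqref{BasisForH'}. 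The paper elides exactly the same point (``the compatibility with the base-change \ldots holds clearly''), so this is not a gap relative to the paper's own standard, but if you run the duality route you should phrase the conclusion this way rather than claiming the two symbols span the same $K$-line on the nose.
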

\begin{proof}
\noindent{\bf \underline{Node case}:}
First consider the split case.
On the affine open subscheme $U_z$ of $C'$ defined by $z\ne 0$,
write
$F = xyz^3 + f$ and choose $u, v$ with $f=u+v$ such that $x^2 | u$ and $y^2 | v$. Put $w:=uv/(xy)^2$. We substitute $1$ for $z$. The equation
\[
\left(\frac{u}{f}\right)^2 - \frac{u}{f} = -\frac{uv}{f^2}
= -\frac{x^2y^2w}{f^2}\equiv - w \modulo\ (F)
\]
says that $\displaystyle\frac{u}{f}$ is integral over $U_z$.

Let us analyze the connecting homomorphism $H^0(\delta) \to H^1(C',{\mathcal O}_{C'})$ of \eqref{StandardExactSquence}.
As a lift of $H^0(\delta)$ we take 
$(\alpha_x,\alpha_y,\alpha_z):=(0,0,u/f)\in \Gamma(U_x,\mathcal O) \times\Gamma(U_y,\mathcal O)\times\Gamma(U_z,\mathcal O)$.
The differences (considered on $U_{xy}, U_{xz}, U_{yz}$) of $\alpha_x,\alpha_y,\alpha_z$ give a cocycle, which defines an element of $H^1(C',{\mathcal O}_{C'})$:
on $U_{xy}$ we have $\beta_{xy}:=\alpha_x-\alpha_y=0$ and
on $U_{xz}$, setting $v'=v/y$ we have
\[
\beta_{xz}:=\alpha_x - \alpha_z = - \frac{u}{f} = -1 + \frac{v}{f}
\equiv -1 - \frac{v}{xyz^3}= -1 - \frac{v'}{xz^3}.
\]
and on $U_{yz}$, setting $u'=u/x$ we have
\[
\beta_{yz}:=\alpha_y-\alpha_z = - \frac{u}{f} \equiv \frac{u}{xyz^3} = \frac{u'}{yz^3}.
\]

Recall the map $H^1(C',{\mathcal O}_{C'}) \to H^2(\bbP^2, {\mathcal O}_{\bbP^2}(-5))$
is obtained as the connecting homomorphism of
\begin{equation}
\begin{CD}
0 @>>> {\mathcal O}_{\bbP^2}(-5) @>F\times>> {\mathcal O}_{\bbP^2} @>>> {\mathcal O}_{C'} @>>> 0.
\end{CD}
\end{equation}
The image of $H^0(\delta) \to H^1(C',{\mathcal O}_{C'}) \to H^2(\bbP^2, {\mathcal O}_{\bbP^2}(-5))$
is obtained by dividing
\[
\beta_{xy}-\beta_{xz}+\beta_{yz}=
0-\left(-1-\frac{v'}{xz^3}\right) + \frac{u'}{yz^3}   =  \frac{F}{xyz^3}.
\]
by $F$.

In the non-split case, use $X=x-\sqrt{\epsilon}y$ and $Y=x+\sqrt{\epsilon}y$.
Then 
over $E:=K[\sqrt{\epsilon}]$
with respect to $X,Y,z$ one can determine as above
the image of the map $H^0(\delta) \to H^1(C',{\mathcal O}_{C'}) \to H^2(\bbP^2, {\mathcal O}_{\bbP^2}(-5))$. It is generated by
\[
\frac{1}{XYz^3} = \frac{1}{(x^2-\epsilon y^2) z^3}.
\]
As the map is defined over $K$ and the compatibility with the base-change
with respect to $E/K$ holds clearly, we have the desired result.
\bigskip

\noindent{\bf \underline{Cusp case}:}
Write $F=x^2g + y(u+v)$ where 
$x^2g$ is the part which does not contain $y$, and
$u$ and $v$ are polynomials chosen so that $x|u$ and $y|v$.
Put $u'=u/x$ and $v'=v/y$.
We claim that $xg/(yz^3)$ is integral over $U_z$.
Indeed, substituting one for $z$, we get
\[
\left(\frac{xg}{y}\right)^2 + u'\left(\frac{xg}{y}\right)+v'g
\equiv -\frac{y(u+v) g}{y^2}+\frac{ug}{y} + \frac{vg}{y}=0.
\]
In the same way, we get the desired result
replacing $u/f$ by $xg/(yz^3)$ in the argument in the node case above.
Specifically we put
$\alpha_x=0$, $\alpha_y=0$ and $\alpha_z= xg/(yz^3)$ over $U_x$, $U_y$ and $U_z$ respectively, and get
$\beta_{xy}=0$ over $U_{xy}$,
\[
\beta_{xz} = - \frac{xg}{yz^3}= - \frac{x^2g}{xyz^3}\equiv \frac{u+v}{xz^3}
\]
over $U_{xz}$ and $\beta_{yz}=-xg/(yz^3)$ over $U_{yz}$.
Then the image of $H^0(\delta) \to H^1(C',{\mathcal O}_{C'}) \to H^2(\bbP^2, {\mathcal O}_{\bbP^2}(-5))$
is obtained by dividing 
\[
\beta_{xy}-\beta_{xz}+\beta_{yz} = - \frac{u+v}{xz^3}-\frac{xg}{yz^3}
=- \frac{F}{xyz^3}
\]
by $F$.
\end{proof}

\subsection{Hasse-Witt matrices}
Let $C$ be a trigonal curve of genus $5$.
Let us give a method to obtain a Hasse-Witt matrix of $C$,
which is a matrix representation of the map
${\mathcal F}_C^*: H^1(C,{\mathcal O}_C) \to H^1(C,{\mathcal O}_C)$
induced by the absolute frobenius ${\mathcal F}_C$ on $C$.
Let $C'$ be the  quintic plane curve associated to $C$, obtained in Lemma \ref{CharacterizationTrigonal}.
Here is a basic commutative diagram
\begin{equation*}
\xymatrix{
H^0(\delta)\ar[r]\ar[dd] & H^2 ( \mathbf{P}^2, {\mathcal O}_{\bbP^2}(-5)))\ar[d]_{{\mathcal F}_{\bbP^2}^{\ast}} &&H^1 \left( C', {\cal O}_{C'} \right)\ar[ll]_{\simeq}\ar[dd]^{\mathcal F_{C'}^{\ast}}\ar[ld]\ar[r] & H^1 \left( C, {\cal O}_C \right)\ar[dd]^{{\mathcal F}_C^{\ast}}\\
                         & H^2 ( \mathbf{P}^2, {\mathcal O}_{\bbP^2}(-5p)))\ar[d]_{F^{p-1}\times} & H^1 \left( {C'}^p, {\cal O}_{{C'}^p} \right)\ar[l]_(0.45){\simeq}\ar[rd] & \\
H^0(\delta)\ar[r] & H^2 ( \mathbf{P}^2, {\mathcal O}_{\bbP^2}(-5))) &&H^1 \left( C', {\cal O}_{C'} \right)\ar[ll]_{\simeq} \ar[r]& H^1 \left( C, {\cal O}_C \right),
%
}
\end{equation*}
where ${C'}^p$ is the closed subscheme of $\bbP^2$ defined by $(F^p)$, and
$\mathcal F_S$ is the absolute Frobenius maps on a scheme $S$ in characteristic $p$. 

\noindent{\bf\underline{Split node case and cusp case}:}\quad
In this case, the space $H^1(C,{\mathcal O}_C)$ is realized as
the quotient of $H^2(\bbP^2,{\mathcal O}_{\bbP^2}(-5))$
by $H^0(\delta)$ the one-dimensional subspace generated by $\displaystyle\frac{1}{xyz^3}$. By the commutative diagram above, we obtain

\begin{prop}\label{HW_ssp_non-split}
Let $C$ be a trigonal curve of genus $5$ which is of split node type or of cusp type.
Let $C'=V(F)$ be the associated quintic in $\bbP^2$ obtained
in {\rm Lemma \ref{CharacterizationTrigonal} (1)}.
Let $c_{l,m}$ {\rm (}$1\le l, m\le 5${\rm )} be the coefficients of the monomials
\[x^{pi_l-i_m}y^{pj_l-j_m}z^{pk_l-k_m}\]
in $F^{p-1}$,
where $(i_1,j_1,k_1)=(3,1,1)$,
$(i_2,j_2,k_2)=(1,3,1)$,
$(i_3,j_3,k_3)=(2,2,1)$,
$(i_4,j_4,k_4)=(2,1,2)$ and
$(i_5,j_5,k_5)=(1,2,2)$.
Then the Hasse-Witt matrix of $C$ is given by $(c_{l,m})$
the square matrix of size $5$.
\end{prop}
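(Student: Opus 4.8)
The plan is to extract the Hasse-Witt matrix of $C$ from the commutative diagram above, using the identification of $H^1(C,\mathcal O_C)$ with the quotient of $H^2(\bbP^2,\mathcal O_{\bbP^2}(-5))$ by the line $H^0(\delta)=K\cdot\frac{1}{xyz^3}$ established in the previous proposition. First I would fix a basis of $H^2(\bbP^2,\mathcal O_{\bbP^2}(-5))$. Recall that via \v{C}ech cohomology on the standard affine cover, $H^2(\bbP^2,\mathcal O_{\bbP^2}(-5))$ has basis the Laurent monomials $\frac{1}{x^iy^jz^k}$ with $i,j,k\ge 1$ and $i+j+k=5$; there are exactly $\binom{4}{2}=6$ such triples, namely $(3,1,1),(1,3,1),(1,1,3),(2,2,1),(2,1,2),(1,2,2)$. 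The quotient by the span of $\frac{1}{xyz^3}$, i.e.\ by the class $(1,1,3)$, is therefore $5$-dimensional with basis the classes of $\frac{1}{x^{i_l}y^{j_l}z^{k_l}}$ for $l=1,\dots,5$, where the $(i_l,j_l,k_l)$ are exactly the five triples listed in the statement. This is the target basis in which we want to write the Hasse-Witt matrix.

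Next I would compute the action of Frobenius along the middle row of the diagram. The Frobenius $\mathcal F_{\bbP^2}^*$ sends $\frac{1}{x^iy^jz^k}\in H^2(\bbP^2,\mathcal O_{\bbP^2}(-5))$ to $\frac{1}{x^{pi}y^{pj}z^{pk}}\in H^2(\bbP^2,\mathcal O_{\bbP^2}(-5p))$, and then the map $F^{p-1}\times$ back to $H^2(\bbP^2,\mathcal O_{\bbP^2}(-5))$ is multiplication by $F^{p-1}$ followed by the projection that keeps only the \v{C}ech-cohomologous Laurent monomials, i.e.\ those $\frac{1}{x^ay^bz^c}$ with $a,b,c\ge 1$ and $a+b+c=5$. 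Writing $F^{p-1}=\sum c_{(e,f,g)}\,x^ey^fz^g$, the coefficient of $\frac{1}{x^ay^bz^c}$ in the image of $\frac{1}{x^{pi}y^{pj}z^{pk}}$ is the coefficient of $x^{pi-a}y^{pj-b}z^{pk-c}$ in $F^{p-1}$. Taking $(i,j,k)=(i_m,j_m,k_m)$ for the source basis vector and $(a,b,c)=(i_l,j_l,k_l)$ for the target basis vector gives precisely $c_{l,m}$ as defined in the statement, the coefficient of $x^{pi_m-i_l}y^{pj_m-j_l}z^{pk_m-k_l}$ in $F^{p-1}$. (I would double-check the index convention: whether the entry in position $(l,m)$ reads off the coefficient of $x^{pi_l-i_m}\cdots$ or $x^{pi_m-i_l}\cdots$ depends on whether the matrix acts on column vectors from the left in the chosen basis; one transposes if necessary, and the statement fixes the convention $pi_l-i_m$.) By commutativity of the outer rectangle of the diagram, these same numbers compute $\mathcal F_C^*$ in the induced basis of $H^1(C,\mathcal O_C)$, since the vertical maps on the left identify the two quotients compatibly.

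The one genuinely delicate point — the main obstacle — is the passage through the quotient: a priori $\mathcal F_{\bbP^2}^*$ composed with $F^{p-1}\times$ need not preserve the subspace $H^0(\delta)$, so it is not obvious that the matrix of $\mathcal F_C^*$ on the quotient is literally the $5\times 5$ submatrix $(c_{l,m})$ rather than that submatrix modified by correction terms coming from the $\frac{1}{xyz^3}$-component. Here one must invoke the previous proposition: the composite $H^0(\delta)\to H^2(\bbP^2,\mathcal O_{\bbP^2}(-5))$ lands in $K\cdot\frac{1}{xyz^3}$, and the commutativity of the left square of the diagram (the one with $H^0(\delta)$ in both corners) guarantees that $\mathcal F$ does carry $H^0(\delta)$ into $H^0(\delta)$, so the action descends cleanly to the quotient and is represented by exactly $(c_{l,m})$. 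I would therefore organize the write-up as: (a) the \v{C}ech basis of $H^2$ and the five-element basis of the quotient; (b) the explicit formula for $\mathcal F_{\bbP^2}^*$ followed by $F^{p-1}\times$ on Laurent monomials; (c) reading off $c_{l,m}$; (d) invoking the diagram plus the previous proposition to conclude that this matrix represents $\mathcal F_C^*$. Everything after (a)–(b) is a bookkeeping of exponents, so I would not belabor it.
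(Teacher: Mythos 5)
Your proposal is correct and follows the same route as the paper, which proves this proposition simply by citing the commutative diagram preceding it; you have filled in exactly the bookkeeping (the six-element \v{C}ech basis of $H^2(\bbP^2,\mathcal O_{\bbP^2}(-5))$, the formula for $F^{p-1}\times\circ\mathcal F^*_{\bbP^2}$ on Laurent monomials, and the descent to the quotient by $K\cdot\frac{1}{xyz^3}$ via the left square of the diagram) that the paper leaves implicit. Your flagged concern about preserving $H^0(\delta)$ is resolved precisely as you say, by the commutativity of that left square.
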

Since $C$ is superspecial if and only if its Hasse-Witt matrix is zero, we get
\begin{cor}\label{Criterion_ssp_non-split}
$C$ is superspecial if and only if all the coefficients of the monomials
\[x^{pi-i'}y^{pj-j'}z^{pk-k'}\]
in $F^{p-1}$ are zero, where $(i,j,k)$ and $(i',j',k')$ run through $(3,1,1),(1,3,1),(2,2,1),(2,1,2),(1,2,2)$.
\end{cor}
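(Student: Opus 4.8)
The plan is to obtain the Corollary as an immediate consequence of Proposition \ref{HW_ssp_non-split} together with the cohomological characterization of superspeciality recalled in the introduction: $C$ is superspecial precisely when $\mathcal{F}_C^*$ on $H^1(C,\mathcal{O}_C)$ vanishes, equivalently when any representing matrix --- any Hasse-Witt matrix --- is the zero matrix. By Proposition \ref{HW_ssp_non-split} the Hasse-Witt matrix of $C$ equals the $5\times 5$ matrix $(c_{l,m})$ whose $(l,m)$-entry is the coefficient of $x^{pi_l-i_m}y^{pj_l-j_m}z^{pk_l-k_m}$ in $F^{p-1}$, so this matrix is zero if and only if every such coefficient vanishes. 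The only remaining point is a bookkeeping one: as $(l,m)$ runs over $\{1,\dots,5\}^2$, the ordered pair $\big((i_l,j_l,k_l),(i_m,j_m,k_m)\big)$ runs exactly over all ordered pairs of elements of the set $S=\{(3,1,1),(1,3,1),(2,2,1),(2,1,2),(1,2,2)\}$. Hence ``$c_{l,m}=0$ for all $l,m$'' is literally the assertion that the coefficient of $x^{pi-i'}y^{pj-j'}z^{pk-k'}$ in $F^{p-1}$ vanishes for all $(i,j,k),(i',j',k')\in S$, which is the statement of the Corollary.

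Since the substance is really carried by Proposition \ref{HW_ssp_non-split}, I would also sketch how that proposition is extracted from the commutative diagram displayed before it. In the split-node and cusp cases the preceding proposition identifies $H^1(C,\mathcal{O}_C)$ with the quotient of $H^2(\bbP^2,\mathcal{O}_{\bbP^2}(-5))$ by the line spanned by $1/(xyz^3)$. Using the standard monomial description, $H^2(\bbP^2,\mathcal{O}_{\bbP^2}(-5))$ has as basis the six classes $x^{-i}y^{-j}z^{-k}$ with $i,j,k\ge 1$ and $i+j+k=5$; killing $1/(xyz^3)$ leaves exactly the five classes indexed by $S$, which are the $x^{-i_l}y^{-j_l}z^{-k_l}$ of the proposition. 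One then chases $\mathcal{F}_{C'}^*$: the absolute Frobenius on $H^2(\bbP^2,\mathcal{O}_{\bbP^2}(-5))$ is the $p$-th power map on monomials, landing in $H^2(\bbP^2,\mathcal{O}_{\bbP^2}(-5p))$, and the identification $H^1({C'}^p,\mathcal{O}_{{C'}^p})\cong H^2(\bbP^2,\mathcal{O}_{\bbP^2}(-5p))$ together with the map ${C'}^p\to C'$ turns the relevant vertical arrow into multiplication by $F^{p-1}$. Thus $x^{-i_l}y^{-j_l}z^{-k_l}$ is sent to $F^{p-1}\cdot x^{-pi_l}y^{-pj_l}z^{-pk_l}$, a Laurent polynomial of total degree $-5$ whose image in $H^2(\bbP^2,\mathcal{O}_{\bbP^2}(-5))$ is the sum of its monomials with all three exponents $\le -1$; the coefficient of the basis element $x^{-i_m}y^{-j_m}z^{-k_m}$ in this image is then exactly the coefficient of $x^{pi_l-i_m}y^{pj_l-j_m}z^{pk_l-k_m}$ in $F^{p-1}$, i.e.\ $c_{l,m}$.

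I expect the main obstacle in Proposition \ref{HW_ssp_non-split} to be the compatibility with passage to the quotient: one must verify from the left square of the diagram that $\mathcal{F}_{\bbP^2}^*$ followed by multiplication by $F^{p-1}$ carries the image of $H^0(\delta)$ --- the line $\langle 1/(xyz^3)\rangle$ --- back into itself (which follows from functoriality of the connecting map of \eqref{StandardExactSquence} under Frobenius on the length-one sheaf $\delta$), so that $\mathcal{F}_{C'}^*$ genuinely descends to $H^1(C,\mathcal{O}_C)$ and is represented in the five-element basis by the submatrix $(c_{l,m})_{1\le l,m\le 5}$ with no correction terms. The remaining ingredients --- exactness of \eqref{StandardExactSquenceCoh}, the monomial description of $H^2$ of a negative line bundle on $\bbP^2$, and the identification of Frobenius pullback with $p$-power substitution --- are standard (cf.\ \cite{Kudo}, \cite{Kudo2}) and I would cite rather than reprove them. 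With Proposition \ref{HW_ssp_non-split} in place, the Corollary follows as in the first paragraph.
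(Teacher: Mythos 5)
Your proof is correct and matches the paper's: the Corollary is exactly the observation that $C$ is superspecial iff its Hasse--Witt matrix vanishes, combined with Proposition \ref{HW_ssp_non-split} identifying that matrix as $(c_{l,m})$, which is what the paper's one-line derivation does. Your additional sketch of how Proposition \ref{HW_ssp_non-split} follows from the commutative diagram is consistent with the paper's (unwritten) argument but is not needed for the Corollary itself.
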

\noindent{\bf\underline{Non-split node case}:}\quad
The space $H^1(C,{\mathcal O}_C)$ is realized as
the quotient of $H^2(\bbP^2,{\mathcal O}_{\bbP^2}(-5))$
by the one-dimensional subspace generated by $\displaystyle\frac{1}{(x^2-\epsilon y^2)z^3}$.
As a basis of $H^1(C,{\mathcal O}_C)$,
it is natural to use the rational functions of the right hand sides of \eqref{CCM} below.
This basis is obtained by the Zariski cohomology with respect to
the open covering $(U_X, U_Y, U_z)$, where 
$X=x-\sqrt{\epsilon} y$ and $Y=x+\sqrt{\epsilon} y$
and $U_X$ is the part of $X\ne 0$ and so on. Note
\begin{eqnarray}
\frac{1}{X^3Yz}+\frac{1}{XY^3z} &=& \frac{2(x^2+\epsilon y^2)}{(x^2-\epsilon y^2)^3z}, \nonumber \\
\frac{1}{\sqrt{\epsilon}}\left(
\frac{1}{X^3Yz} - \frac{1}{XY^3z} \right) &=& \frac{4xy}{(x^2-\epsilon y^2)^3z},\nonumber \\
\frac{1}{X^2Y^2z} &=& \frac{1}{(x^2-\epsilon y^2)^2z}, \label{CCM}\\
\frac{1}{X^2Yz^2}+\frac{1}{XY^2z^2} &=& \frac{2x}{(x^2-\epsilon y^2)^2z^2},\nonumber \\
\frac{1}{\sqrt{\epsilon}}
\left(\frac{1}{X^2Yz^2}-\frac{1}{XY^2z^2}\right) &=& \frac{2y}{(x^2-\epsilon y^2)^2z^2}.\nonumber 
\end{eqnarray}
The Hasse-Witt matrix $H$ with respect to the basis
consisting of the right hand sides of \eqref{CCM}
is given by $P^{(p)}H'P^{-1}$, where
$H'$ is the Hasse-Witt matrix with respect to
\begin{equation}\label{BasisForH'}
B_1=\frac{1}{X^3Yz},\quad B_2=\frac{1}{XY^3z},\quad B_3=\frac{1}{X^2Y^2z},\quad B_4=\frac{1}{X^2Yz^2},\quad B_5=\frac{1}{XY^2z^2},
\end{equation}
i.e., the $(i,j)$-entry of $H'$ ($1\le i,j\le 5$) is
the coefficient of the monomial $B_iB_j^{-p}$ in $X,Y,z$ 
of
\[
F\left(\frac{X+Y}{2},\frac{X-Y}{-2\sqrt{\epsilon}},z\right)^{p-1},
\]
and $P$ is the coordinate-change matrix corresponding to \eqref{CCM}
\[
P = \begin{pmatrix}
1 & 1 & 0 & 0 & 0\\
\sqrt{\epsilon}^{-1}& -\sqrt{\epsilon}^{-1} & 0 & 0 & 0\\
0 & 0 & 1& 0 &0\\
0 & 0 &  0& 1 & 1 \\
0 & 0 & 0 & \sqrt{\epsilon}^{-1}& -\sqrt{\epsilon}^{-1}\\
\end{pmatrix}
\]
and $P^{(p)}$ is obtained by taking the $p$-th power of each entry of $P$.


\newcommand{\bfb}{{\mbox{\boldmath $b$}}}

\section{Reduction}\label{SectionReduction}
Let $p$ be a rational prime. Assume $p\ge 5$.
Let $K$ be the finite field $\F_q$ consisting of $q=p^a$ elements.
Let $\zeta$ be a primitive element of $K^\times$.

\subsection{Split node case}
\begin{prop}\label{ReductionSplitNode}
Any trigonal curve over $K$ of split node type
has a quintic model in $\bbP^2$ of the form
\begin{enumerate}
\item[\rm (1)] for $a_i\in K$,
\begin{eqnarray}
F &=& xyz^3 + (x^3 + b_1 y^3) z^2
+ (a_1 x^4 + a_2 x^3 y + a_3 x^2 y^2 + a_4 x y^3 + a_5 y^4) z\nonumber\\
&& + a_6 x^5 + a_7 x^4 y + a_8 x^3 y^2 + a_9 x^2 y^3 + a_{10} x y^4 + a_{11} y^5, \label{SplitNodeReducedEq1}
\end{eqnarray}
where $b_1\in \{0, 1\}$ if $q \equiv 2 \ \modulo\ 3$ and
$b_1\in \{0, 1,\zeta\}$ if $q \equiv 1 \ \modulo\ 3$.
\item[\rm (2)] for $(c_1,c_2) = (0,0), (1,0), (0,1), (1,1), (1,\zeta)$ and for $a_i\in K$,
\begin{eqnarray}
F &=& xyz^3 + (c_1 x^4 + c_2 x^3 y + a_3 x^2 y^2 + a_4 x y^3 + a_5 y^4) z\nonumber\\
&& + a_6 x^5 + a_7 x^4 y + a_8 x^3 y^2 + a_9 x^2 y^3 + a_{10} x y^4 + a_{11} y^5. \label{SplitNodeReducedEq2}
\end{eqnarray}
\end{enumerate}
\end{prop}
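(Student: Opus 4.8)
The plan is to take the split-node normal form produced by Lemma~\ref{CharacterizationTrigonal}(1)(i), namely $F = xyz^3 + g_2z^2 + g_1z + g_0$ with $g_j\in K[x,y]$ a binary form of degree $5-j$, and to push $F$ as far as possible by the coordinate changes preserving this shape. A $K$-automorphism of $\bbP^2$ preserves the shape exactly when it fixes the node $[0:0:1]$ and permutes the two tangent lines $\{x=0\}$ and $\{y=0\}$; together with the harmless rescaling of $F$ (the $xyz^3$-coefficient is determined up to a nonzero scalar and never vanishes) these form the group $\mathcal G$ generated by the diagonal tori $(x,y,z)\mapsto(\alpha x,\beta y,\gamma z)$, the shears $(x,y,z)\mapsto(x,y,z+\delta x+\epsilon y)$, and the swap $\sigma\colon x\leftrightarrow y$. (By Lemma~\ref{CharacterizationTrigonal}(3) the subgroup of $\mathcal G$ fixing $C'$ itself is $\Aut_K(C)$, but for the reduction we use all of $\mathcal G$.)

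First I would normalize $g_2$. Substituting $z\mapsto z+\delta x+\epsilon y$ replaces $g_2$ by $g_2+3\delta x^2y+3\epsilon xy^2$ while leaving the $x^3$- and $y^3$-coefficients of $g_2$ untouched; since $p\ge5$ this kills the $x^2y$- and $xy^2$-terms, so one may assume $g_2 = b_0x^3+b_3y^3$. Up to the diagonal scalings (which rescale $b_0,b_3$ by cubes) and up to $\sigma$ (which interchanges them), the pair $(b_0,b_3)$ is all that is left of $g_2$, and this dichotomizes the argument: either $(b_0,b_3)\ne(0,0)$, and then no element of $\mathcal G$ can make the $x^3z^2$- or $y^3z^2$-coefficient vanish (they are fixed by shears and merely rescaled by the torus), or $(b_0,b_3)=(0,0)$, and then a further shear removes $g_2$ altogether.

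In the first case I would use $\sigma$ to arrange $b_0\ne0$, rescale $F$ so that $b_0=1$ (re-running the shear cleanup if disturbed), and then spend the last remaining scaling direction to move $b_3$ into a set of representatives of $\{0\}$ together with $K^\times/(K^\times)^3$, which is $\{0,1\}$ when $q\equiv2\pmod3$ and $\{0,1,\zeta\}$ when $q\equiv1\pmod3$, noting in the latter case that $\sigma$ folds the cube-class of $\zeta^2$ onto that of $\zeta$ since $\zeta^{-2}\equiv\zeta$ modulo cubes. No further reduction of $g_1,g_0$ is available, the normalizations used matching the four continuous parameters of $\mathcal G$, and one obtains \eqref{SplitNodeReducedEq1}. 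In the second case, once $g_2=0$ the stabilizer of the slice $\{g_2=0\}$ in $\mathcal G$ shrinks to the diagonal torus with $\sigma$ and the $F$-rescaling (any shear with $(\delta,\epsilon)\ne(0,0)$ revives $g_2$, and the composite shear $\cdot\,\sigma\cdot$ countershear reduces to $\sigma$ alone). On this slice $g_1$ transforms by $g_1\mapsto\lambda\,g_1(\alpha x,\beta y)$ with $\lambda\alpha\beta\in(K^\times)^2$, possibly after $\sigma$; I would normalize the leading pair $(c_1,c_2)$, the coefficients of $x^4z$ and $x^3yz$, by a case analysis according to which of $x$, $y$, $xy$ divides $g_1$: when $xy\nmid g_1$ one brings $c_1$ to $1$ and then $c_2$ into its square class, giving $(1,0),(1,1),(1,\zeta)$; when $xy\mid g_1$ one writes $g_1=xyq$ and normalizes the binary quadratic $q$ (adjusting its leading coefficient within its square class and moving a rational root to $[1:1]$) to reach $(c_1,c_2)\in\{(0,0),(0,1)\}$, which gives \eqref{SplitNodeReducedEq2}.

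The hard part is the second case: the residual group is only two-dimensional, so keeping the list of attainable $(c_1,c_2)$ as short as claimed forces one to track carefully the square classes produced by the scalings and to dispose of every degenerate shape of $g_1$ (divisible by a coordinate line, by a perfect square times a line, and so on) by an explicit sub-case analysis, verifying that the five pairs in the statement really exhaust the possibilities. The first case, by contrast, is essentially formal once $\mathcal G$ and its action on $g_2$ are understood.
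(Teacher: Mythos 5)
Your overall strategy is the same as the paper's: kill the $x^2yz^2$- and $xy^2z^2$-terms by the shear $z\mapsto z+\delta x+\epsilon y$ (using $3\neq 0$ in $K$), then act by the residual diagonal substitutions, the swap $x\leftrightarrow y$, and the forced overall rescaling. Your treatment of case (1) — including the observation that the shear fixes the $x^3z^2$- and $y^3z^2$-coefficients, the reduction of $b_1$ modulo cubes, and the identification of $\zeta^2$ with $\zeta$ via the swap — matches the paper's proof and is correct.

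There is, however, a genuine gap in your case (2), in the subcase $xy\mid g_1$, i.e.\ $a_1=a_5=0$. As you yourself note, the residual group here is only the torus, the swap and the rescaling, and under $(x,y,z)\mapsto(\alpha x,\beta y,\gamma z)$ followed by the forced renormalization the $x^3yz$-coefficient transforms as $c_2\mapsto c_2\,\alpha^2\gamma^{-2}$, while the swap replaces it by the $xy^3z$-coefficient $a_4$ times a square: the square class of $c_2$ (resp.\ of $a_4$ after the swap) is an invariant of the orbit. The device you invoke to finish, ``moving a rational root of $q$ to $[1:1]$,'' is not available — a non-diagonal linear substitution in $(x,y)$ destroys the normalization $xy$ of the tangent cone, and in any case repositioning a root would not alter the square class of the leading coefficient of $q$. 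Hence when $a_2$ and $a_4$ are both nonsquares (e.g.\ $g_1=\zeta\,xy(x^2+y^2)$), the pair $(c_1,c_2)$ can only be brought to $(0,\zeta)$, which lies neither in your claimed set $\{(0,0),(0,1)\}$ for this subcase nor among the five pairs listed in \eqref{SplitNodeReducedEq2}; the subcase analysis therefore does not close as written. (The paper's own proof disposes of case (2) with a one-sentence assertion that suitable $\alpha,\beta$ exist and does not address this square-class obstruction either; your more explicit bookkeeping exposes the difficulty rather than resolving it, and any complete argument must either add the representative $(0,\zeta)$ or show that no trigonal curve realizes that orbit.)
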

\begin{proof}
Considering $z \to z + ax + by$, we can transform the quintic to a quintic
whose coefficients of $x^2yz^2$ and $xy^2z^2$ are zero.
One may write
\begin{eqnarray*}
F &=& xyz^3 + (b_0 x^3 + b_1 y^3) z^2
+ (a_1 x^4 + a_2 x^3 y + a_3 x^2 y^2 + a_4 x y^3 + a_5 y^4) z\\
&& + a_6 x^5 + a_7 x^4 y + a_8 x^3 y^2 + a_9 x^2 y^3 + a_{10} x y^4 + a_{11} y^5.
\end{eqnarray*}

If $(b_0,b_1)\ne (0,0)$, then considering
the exchange of $x$ and $y$
we may assume $b_0\ne 0$.
Consider $(x,y) \to (\alpha x,\beta y)$ and the multiplication by $(\alpha\beta)^{-1}$,
the coefficients $b_0$ and $b_1$ are transformed into $b_0\alpha^2\beta^{-1}$ and $b_1\alpha^{-1}\beta^2$ respectively.
Set $\beta := b_0\alpha^2$; then $b_0$ and $b_1$
become $1$ and $b_0^2b_1\alpha^3$ respectively.
Thus we may assume that $b_0 = 1$ and
$b_1$ is $0$ or a representative of
an element of $K^\times/(K^\times)^3$, i.e.,
$(b_0,b_1) = (1,0), (1,1), (1,\zeta), (1,\zeta^2)$.
Considering the exchange of $x$ and $y$ again, one may reduce to
$(b_0,b_1) = (1,0), (1,1), (1,\zeta)$.

If $(b_0,b_1) = (0,0)$, then there exist elements $\alpha, \beta$ of $K^\times$ such that the transformation $(x,y)\mapsto (\alpha x, \beta y)$
and the multiplication by $(\alpha\beta)^{-1}$ to the whole
make $F$ the form of (2).
\end{proof}

\subsection{Non-split node case}
Let $\epsilon$ be an element of $K^\times \smallsetminus (K^\times)^2$.


\begin{prop}\label{ReductionNonSplitNode}
Any trigonal curve over $K$ of non-split node type
has a quintic model in $\bbP^2$ of the form
\begin{enumerate}
\item[\rm (1)] 
for $a_i\in K$,
\begin{eqnarray}
F &=& (x^2-\epsilon y^2)z^3 + (x(x^2+3\epsilon y^2) + b y(3x^2+\epsilon y^2)) z^2\nonumber\\
&&+ (a_1 x^4 + a_2 x^3 y + a_3 x^2 y^2 + a_4 x y^3 + a_5 y^4) z\nonumber\\
&& + a_6 x^5 + a_7 x^4 y + a_8 x^3 y^2 + a_9 x^2 y^3 + a_{10} x y^4 + a_{11} y^5, \label{NonSplitNodeReducedEq1}
\end{eqnarray}
where $b=0$ if $q \not\equiv -1 \modulo 3$ and
otherwise $b$ has three possibilities determined by the condition that
$(1,b)$ is parallel to $(1,0)A$ with representatives $A$
of $\tilde {\rm C}/\tilde {\rm C}^3$
{\rm (}for example $b=0,6,10$ if $q=11${\rm )}, where
\[
\tilde {\rm C} = \left\{\left.
\begin{pmatrix}r & \epsilon s\\ s & r\end{pmatrix}\right| (r,s) \in K^2,\ (r,s)\ne (0,0) \right\}.
\]
\item[\rm (2)]
 for $c = 1, \zeta$ and for $a_i\in K$,
\begin{eqnarray}
F &=& (x^2-\epsilon y^2)z^3 + (c x^4 + a_2 x^3 y + a_3 x^2 y^2 + a_4 x y^3 + a_5 y^4) z\nonumber\\
&& + a_6 x^5 + a_7 x^4 y + a_8 x^3 y^2 + a_9 x^2 y^3 + a_{10} x y^4 + a_{11} y^5. \label{NonSplitNodeReducedEq2}
\end{eqnarray}
\item[\rm (3)]
for $a_i\in K$,
\begin{eqnarray}
F &=& (x^2-\epsilon y^2)z^3 + a_6 x^5 + a_7 x^4 y + a_8 x^3 y^2 + a_9 x^2 y^3 + a_{10} x y^4 + a_{11} y^5.
\label{NonSplitNodeReducedEq3}
\end{eqnarray}
\end{enumerate}
\end{prop}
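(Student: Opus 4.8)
The strategy is the same "normalise by projective automorphisms" argument as in Proposition \ref{ReductionSplitNode}, but now the quadratic form $Q=x^2-\epsilon y^2$ is anisotropic over $K$, so the relevant automorphism group preserving the singular point and the quadric is not the split torus but its non-split analogue $\tilde{\mathrm C}$. First I would write a general non-split-node quintic in the normal form of Lemma \ref{CharacterizationTrigonal}(1)(i): after a $K$-automorphism of $\bbP^2$ we may assume the singular point is $(0:0:1)$ and
\[
F = (x^2-\epsilon y^2)z^3 + q(x,y)z^2 + r(x,y)z + s(x,y),
\]
where $q,r,s$ are binary forms of degrees $3,4,5$. The automorphisms we are still free to use are exactly those $\mu\in\Aut_K(\bbP^2)$ fixing $(0:0:1)$ and scaling $x^2-\epsilon y^2$ by a constant; these are generated by $z\mapsto z+\ell(x,y)$ with $\ell$ linear, by the scalar multiplications of the whole form, and by the linear substitutions on $(x,y)$ that preserve $x^2-\epsilon y^2$ up to scalar, i.e.\ by matrices in $\tilde{\mathrm C}$ (together with the "conjugation" $\sqrt\epsilon y\mapsto -\sqrt\epsilon y$). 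The key points to record are: (a) $\tilde{\mathrm C}\cup\{0\}$ is a field, namely $K[\sqrt\epsilon]\cong\F_{q^2}$, so $\tilde{\mathrm C}\cong\F_{q^2}^\times$ and $\tilde{\mathrm C}/\tilde{\mathrm C}^3$ is cyclic of order $\gcd(3,q^2-1)$, which is $1$ if $q\not\equiv -1\bmod 3$ and $3$ if $q\equiv -1\bmod 3$; (b) under $g=\bigl(\begin{smallmatrix}r&\epsilon s\\ s&r\end{smallmatrix}\bigr)\in\tilde{\mathrm C}$ the quadric transforms as $x^2-\epsilon y^2\mapsto (r^2-\epsilon s^2)(x^2-\epsilon y^2)$.

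Next I would kill $q$ as far as possible. Completing the $z$-cube via $z\mapsto z+\ell$ changes the $z^2$-coefficient $q$ by $3\ell\cdot(x^2-\epsilon y^2)$; so modulo the ideal generated by $x^2-\epsilon y^2$ we can reach any target for $q$, meaning we may reduce $q$ to any chosen representative of $K[x,y]/(x^2-\epsilon y^2)$, a $2$-dimensional $K$-space spanned (say) by the images of $x^3$ and $y^3$ — equivalently $q$ is reduced to $u\,x(x^2+3\epsilon y^2)+v\,y(3x^2+\epsilon y^2)$ for some $u,v\in K$, these two cubics being the "real and imaginary parts" of $(x+\sqrt\epsilon y)^3$ and the obvious basis of the quotient. (This explains the particular shape of the $z^2$-term in (1).) Then I would act by $\tilde{\mathrm C}$ and the overall scalar to normalise the pair $(u,v)$: the vector $(u,v)\in K^2\cong K[\sqrt\epsilon]$ gets multiplied by the action of $g\in\tilde{\mathrm C}$ (a cube gets absorbed into the simultaneous scaling of $F$), so the orbit of a nonzero $(u,v)$ is a coset of $\tilde{\mathrm C}^3$, and up to the extra conjugation symmetry there are exactly $|\tilde{\mathrm C}/\tilde{\mathrm C}^3|$ orbits of nonzero vectors. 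Thus either $q$ can be made $0$ (always possible if $q\not\equiv-1\bmod 3$, since then $\tilde{\mathrm C}=\tilde{\mathrm C}^3$ forces — no wait, $(u,v)=(0,0)$ is handled below), or $q$ is one of the finitely many listed representatives; concretely for $q=11$ one computes $b\in\{0,6,10\}$. This yields case (1), and case (2)/(3) is the sub-case $q=0$ where one further normalises the quartic term $r$ (and, when $r=0$ too, case (3)).

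Finally, in the $q=0$ branch I would use the residual freedom — the subgroup of $\tilde{\mathrm C}$ together with scalars that fixes $q=0$, which is now the full $\tilde{\mathrm C}$ again — to normalise the leading coefficient of the quartic $r$. Writing $r=c_1x^4+c_2x^3y+\dots$, the pair $(c_1,c_2)$ (more precisely the class of the quartic modulo lower-order-in-$x$ terms that can be absorbed, or the leading element of $r$ in an appropriate quotient) transforms under $\tilde{\mathrm C}$ by a character whose image in $K^\times/(K^\times)^{?}$ is controlled by $r^2-\epsilon s^2$ ranging over the norms, i.e.\ over all of $K^\times$; so the scaling class of $r$ is governed by $K^\times/(K^\times)^2$, giving the two representatives $c=1,\zeta$ of case (2), and the case $r=0$ is case (3). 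The main obstacle is the bookkeeping in this last paragraph: one has to be careful about which monomials of $r$ (and then of $s$) are genuinely normalisable versus which must be left as free parameters $a_i$, because the linear substitution on $(x,y)$ that is still available is only the one-parameter-after-scaling group $\tilde{\mathrm C}/K^\times$, not the full $\GL_2$; this is why only the leading coefficient of the quartic, and nothing in the quintic, gets pinned down. Everything else — the structure of $\tilde{\mathrm C}$, the transformation law of the quadric, and the $z\mapsto z+\ell$ reduction of $q$ — is routine, and I would verify the explicit numbers ($b=0,6,10$ for $q=11$; the choice $c=1,\zeta$) by a direct computation with the primitive element $\zeta$.
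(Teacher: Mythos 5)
Your overall strategy is the same as the paper's: fix the singular point at $(0:0:1)$, use $z\mapsto z+\ell(x,y)$ to remove the part of the cubic $z^2$-coefficient lying in $\langle x(x^2-\epsilon y^2),\,y(x^2-\epsilon y^2)\rangle$, and then act by $\tilde{\rm C}$ and scalars on the invariant complement $V_2=\langle x(x^2+3\epsilon y^2),\,y(3x^2+\epsilon y^2)\rangle$, treating the quartic coefficient separately when the cubic part vanishes. But the two group-theoretic computations that produce the actual lists of representatives are wrong as stated. For case (1) you claim the orbits are counted by $\tilde{\rm C}/\tilde{\rm C}^3\cong E^\times/(E^\times)^3$ (with $E=K[\sqrt{\epsilon}]\cong\F_{q^2}$), of order $\gcd(3,q^2-1)$, and that this is $1$ iff $q\not\equiv-1\pmod 3$. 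That is false: since $p\ge 5$, $\gcd(3,q^2-1)=3$ for \emph{every} $q$, including $q\equiv 1\pmod 3$. The group that actually parametrises the orbits of $K^\times\times\tilde{\rm C}$ on $V_2\smallsetminus\{0\}$ is $E^\times/(K^\times(E^\times)^3)$ — the action on $V_2$ is $v\mapsto\det(\gamma)^{-1}\gamma^3 v$, and the factor $\det(\gamma)^{-1}=N(\alpha)^{-1}$ lands in $K^\times$, which must be quotiented out together with the independent rescalings of $F$ and of $z$ — and only after using $K^\times=N(E^\times)=(E^\times)^{q+1}$ does this have order $\gcd(3,q+1)$, which is $1$ exactly when $q\not\equiv-1\pmod 3$. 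Without that step the dichotomy ``$b=0$ unless $q\equiv-1\pmod 3$'' does not follow from your argument (your own parenthetical ``a cube gets absorbed into the scaling'' is not a valid absorption: $\alpha^3\notin K^\times$ in general).

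Case (2) has a similar problem. The $x^4$-coefficient of the quartic is not an eigenvector for $\tilde{\rm C}$: under $\gamma$ it becomes $r^4a_1+r^3sa_2+r^2s^2a_3+rs^3a_4+s^4a_5$, so it does not ``transform by a character''; and if it did transform by the norm character, the norms sweep out all of $K^\times$ and you could normalise $c$ to $1$, contradicting the two classes $c\in\{1,\zeta\}$. The correct mechanism has two separate halves, both missing from your sketch: (i) if the quartic part is nonzero, some $\gamma\in\tilde{\rm C}$ makes the $x^4$-coefficient nonzero, because a nonzero binary quartic cannot vanish at all $q+1\ge 6$ points of $\bbP^1(K)$; (ii) the residual transformations $(x,y,z)\mapsto(\alpha x,\alpha y,\beta z)$ followed by multiplication by $(\alpha^2\beta^3)^{-1}$ multiply $a_1$ by $(\alpha/\beta)^2$, an arbitrary square — this, and not the norm map, is why the invariant is the class of $a_1$ in $K^\times/(K^\times)^2$ with representatives $1,\zeta$. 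You arrive at the correct normal forms, but in both places the quotient group you name is not the one governing the reduction, so the proof as written does not establish the stated lists of $b$ and $c$.
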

\begin{proof}
Let $V$ be the $K$-vector space consisting of cubic forms in $x,y$ over $K$.
As seen in \cite[Lemma 4.1.1]{KH16},
the representation $V$ of $\tilde{\rm C}$
defined by $\gamma x = rx+\epsilon s y$ and $\gamma y = s x +r y$
for
\[
\gamma = \begin{pmatrix}r & \epsilon s\\ s & r\end{pmatrix}\in\tilde{\rm C}
\]
is decomposed as $V_1\oplus V_2$, where
$V_1 = \langle x(x^2-\epsilon y^2), y(x^2-\epsilon y^2)\rangle$ and
$V_2 = \langle x(x^2+3\epsilon y^2), y(3x^2 + \epsilon y^2)\rangle$.
We write
\begin{eqnarray}
F &=& (x^2-\epsilon y^2)z^3 + 
\left\{c_1 x(x^2-\epsilon y^2) + c_2 y(x^2-\epsilon y^2) +  
b_1x(x^2+3\epsilon y^2) + b_2 y(3x^2+\epsilon y^2)\right\} z^2\nonumber\\
&&+ (a_1 x^4 + a_2 x^3 y + a_3 x^2 y^2 + a_4 x y^3 + a_5 y^4) z\nonumber\\
&& + a_6 x^5 + a_7 x^4 y + a_8 x^3 y^2 + a_9 x^2 y^3 + a_{10} x y^4 + a_{11} y^5.
\end{eqnarray}
Considering the transformation sending $z$ to $z - (c_1/3) x - (c_2/3) y$,
we may eliminate the terms of $x(x^2-\epsilon y^2)z^2$
and $y(x^2-\epsilon y^2)z^2$ from $F$, i.e., we may assume $(c_1,c_2)=(0,0)$.

Put $e_1:=x(x^2+3\epsilon y^2)$ and $e_2:=y(3x^2+\epsilon y^2)$.
It is straightforward to see that
the action of $\tilde{\rm C}$ on $V_2$ is described as
$\bfb \to \det(\gamma)^{-1}\gamma^3\bfb$,
with respect to ${\bfb}=\begin{pmatrix}b_1\\b_2\end{pmatrix}$ for $v=b_1e_1+b_2e_2\in V_2$.
Put $E=K[\sqrt{\epsilon}]$.
Use an isomorphism $E^\times \simeq \tilde{\rm C}$ sending
$\alpha=r+s\sqrt{\epsilon}$ ($r,s\in K$) to $\gamma=\begin{pmatrix}r & \epsilon s\\ s & r\end{pmatrix}$,
where $\det(\gamma)$ is equal to the norm $N(\alpha)=\alpha^{q+1}$ of $\alpha$.
Considering the scalar multiplication by $K^\times$ on $V_2$,
the set of the orbits of $K^\times \times \tilde{\rm C}$ in $V_2\smallsetminus\{0\}$ is bijective to
\[
E^\times/(K^\times (E^\times)^3)
= \begin{cases}
1 & q+1 \not\equiv 0 \modulo 3,\\
E^\times/(E^\times)^3 & q+1 \equiv 0 \modulo 3,
\end{cases}
\]
where the equality follows from $K^\times=N(E^\times)=(E^{\times})^{q+1}$.
Thus if $(b_1,b_2)\ne (0,0)$ is not zero, then we have the reduction of the form (1).

Let us consider the case of $(b_1,b_2) = (0,0)$:
 \begin{eqnarray}
F &=& (x^2-\epsilon y^2)z^3 + (a_1 x^4 + a_2 x^3 y + a_3 x^2 y^2 + a_4 x y^3 + a_5 y^4) z\nonumber\\
&& + a_6 x^5 + a_7 x^4 y + a_8 x^3 y^2 + a_9 x^2 y^3 + a_{10} x y^4 + a_{11} y^5.
\end{eqnarray}
Note that an element 
$\begin{pmatrix}r & \epsilon s\\ s & r\end{pmatrix}$ of $\tilde{\rm C}$
transforms $F$ to a quintic with $x^4$-coefficient
$r^4 a_1 + r^3s a_2 + r^2 s^2 a_3 + r s^3 a_4 + s^4 a_5$.
If either of $a_1,a_2,a_3,a_4,a_5$ is not zero, then
$r^4 a_1 + r^3s a_2 + r^2 s^2 a_3 + r s^3 a_4 + s^4 a_5$
is not zero for some $r,s\in K^\times=\F_q^\times$, since $q\ge 5$.
Thus, it is enough to consider the case of $a_1\ne 0$ and the case of
$a_1 = a_2 = a_3 = a_4 = a_5 = 0$.
The quintic in the latter case is of the form (3). In the former case,
considering a transformation $(x,y,z) \mapsto (\alpha x, \alpha y,\beta z)$
($\alpha,\beta \in K^\times$)
and the multiplication by $(\alpha^2\beta^3)^{-1}$ to the whole,
we can make $a_1$ either of $1,\zeta$, i.e., we have the reduction of the form (2).
\end{proof}

\subsection{Cusp case}

\begin{prop}\label{ReductionCusp}
Any trigonal curve over $K$ of cusp type has a quintic model in $\bbP^2$ of the form
\begin{eqnarray}
F &=& x^2 z^3 + a_1 y^3 z^2 + 
(a_2 x^4 + a_3 x^3 y + a_4 x^2 y^2 + b_1 x y^3 + a_5 y^4) z \nonumber\\
&& + a_6 x^5 + a_7 x^4 y + a_8 x^3 y^2 + a_9 x^2 y^3 + b_2 x y^4 + a_{10} y^5 \label{CuspReducedEq}
\end{eqnarray}
for $a_i\in K$ $(i=1,\ldots, 10)$ with $a_1\ne 0$,
where $b_1 \in \{0,1\}$ and $b_2 \in \{0,1\}$.
\end{prop}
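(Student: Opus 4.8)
The plan is to start from a cusp-type quintic model produced by Lemma~\ref{CharacterizationTrigonal}(1)(ii) and to clear the unwanted coefficients one group at a time, using only $K$-automorphisms of $\bbP^2$ that preserve the shape of the equation. By Lemma~\ref{CharacterizationTrigonal}(1)(ii) such a model can be written as
\[
F = x^2 z^3 + (c_0 x^3 + c_1 x^2 y + c_2 xy^2 + a_1 y^3)z^2 + Q_4(x,y)z + Q_5(x,y),
\]
where $Q_d$ denotes an arbitrary form in $x,y$ of degree $d$ and $a_1\ne 0$. Since $(0:0:1)$ must remain the (unique, $K$-rational) singular point and $x$ must remain the tangent line of the cusp, it suffices to use the substitutions $z\mapsto z+sx+ty$, $y\mapsto \alpha y+\beta x$, $x\mapsto\lambda x$, $z\mapsto \mu z$ (with $\alpha,\lambda,\mu\in K^\times$ and $\beta,s,t\in K$), together with rescaling $F$; each of these keeps $x^2z^3$ the only $z^3$-monomial, so the transformed $F$ is again of the form in Lemma~\ref{CharacterizationTrigonal} and its desingularization is again a trigonal curve of genus $5$ by Lemma~\ref{CharacterizationTrigonal}(2).

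First I would clear the $xy^2z^2$, $x^3z^2$ and $x^2yz^2$ terms, and here the order matters. Applying $y\mapsto y-\tfrac{c_2}{3a_1}x$ (which makes sense because $p\ge5$ and $a_1\ne0$) turns the coefficient of $xy^2$ in the cubic multiplying $z^2$ into $c_2+3a_1\bigl(-\tfrac{c_2}{3a_1}\bigr)=0$, while the pure $y^3$-coefficient stays $a_1\ne 0$ and only $Q_4,Q_5$ get reshuffled. After that the $z^2$-part is $(c_0'x^3+c_1'x^2y+a_1y^3)z^2$, and applying $z\mapsto z-\tfrac{c_0'}{3}x-\tfrac{c_1'}{3}y$ replaces it by $a_1y^3z^2$, because the only contributions to the $z^2$-part after this substitution come from the old $z^2$-part and from $x^2z^3\mapsto x^2(z-\tfrac{c_0'}{3}x-\tfrac{c_1'}{3}y)^3$, neither of which produces an $xy^2z^2$ term; in particular the vanishing achieved in the previous step is preserved, $a_1$ is unchanged, and $x^2z^3$ is still the unique $z^3$-monomial. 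At this point
\[
F = x^2z^3 + a_1 y^3z^2 + Q_4'(x,y)z + Q_5'(x,y),\qquad a_1\ne 0 .
\]

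Finally I would normalize the coefficient $b_1'$ of $xy^3z$ in $Q_4'$ and the coefficient $b_2'$ of $xy^4$ in $Q_5'$, using the remaining freedom $x\mapsto\lambda x$, $y\mapsto\alpha y$, $z\mapsto\mu z$ together with rescaling $F$ by $\rho$. Requiring the coefficient of $x^2z^3$ to stay $1$ forces $\rho=\lambda^{-2}\mu^{-3}$, and then $b_1'$ gets multiplied by $\alpha^3\lambda^{-1}\mu^{-2}$ and $b_2'$ by $\alpha^4\lambda^{-1}\mu^{-3}$, while the $y^3z^2$-coefficient stays nonzero. If $b_1'\ne0\ne b_2'$, the system $\alpha^3\lambda^{-1}\mu^{-2}=(b_1')^{-1}$, $\alpha^4\lambda^{-1}\mu^{-3}=(b_2')^{-1}$ is solved e.g. by $\alpha=1$, $\mu=b_2'/b_1'$, $\lambda=(b_1')^3/(b_2')^2$, so both coefficients become $1$; if exactly one of $b_1',b_2'$ is nonzero, a single rescaling makes it $1$ while the other stays $0$; if both vanish, nothing is needed. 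This produces the form \eqref{CuspReducedEq} with $a_1\ne0$ and $b_1,b_2\in\{0,1\}$. I expect the main obstacle to be getting the bookkeeping of the $z^2$-part exactly right — in particular realizing that $xy^2z^2$ must be eliminated by the $y$-translation, exploiting $a_1\ne0$, \emph{before} the $z$-translation, since the $z$-translation reaches only the $x^3z^2$ and $x^2yz^2$ coefficients — and checking the simultaneous solvability of the two final normalization equations.
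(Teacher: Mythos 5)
Your proposal is correct and follows essentially the same route as the paper's proof: a $y$-translation (using $a_1\ne 0$ and $p\ge 5$) to kill the $xy^2z^2$ term, then a $z$-translation to kill $x^3z^2$ and $x^2yz^2$, and finally a diagonal rescaling plus an overall scalar to normalize $b_1,b_2$ into $\{0,1\}$. The only cosmetic difference is that you allow an extra scaling of $z$ in the last step, whereas the paper scales only $x$ and $y$ and multiplies $F$ by $\alpha^{-2}$; both yield the same normal form.
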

\begin{proof}
Since the coefficient of $y^3z^2$ is not zero,
considering $y \to y + cx$ ($c\in K$), we can transform the quintic to a quintic
whose coefficient of $xy^2z^2$ is zero.
Next considering $z \to z + ax + by$, we can fransform the quintic to a quintic
whose coefficients of $x^3z^2$ and $x^2yz^2$ are zero.
Thus we may write $F$ as
\begin{eqnarray*}
F &=& x^2 z^3 + a_1 y^3 z^2 + 
(a_2 x^4 + a_3 x^3 y + a_4 x^2 y^2 + b_1 x y^3 + a_5 y^4) z\\
&& + a_6 x^5 + a_7 x^4 y + a_8 x^3 y^2 + a_9 x^2 y^3 + b_2 x y^4 + a_{10} y^5.
\end{eqnarray*}
Consider $(x,y) \to (\alpha x,\beta y)$ and the multiplication by $\alpha^{-2}$ to the whole,
the coefficients $b_1$ and $b_2$ are transformed into
$\alpha^{-1}\beta^3b_1$ and $\alpha^{-1}\beta^4b_2$ respectively.
If $b_1\ne 0$, then setting $\alpha := \beta^3b_1$; then $b_1$ and $b_2$
become $1$ and $\beta b_1^{-1}b_2$ respectively.
Hence we may assume that $b_1=1$ and $b_2\in\{0,1\}$ by choosing $\beta$ appropriately.
If $b_1 = 0$, then 
we may assume that $b_2\in\{0,1\}$,
by choosing $\alpha$ appropriately. 
\end{proof}

\section{Main results}\label{sec:main_results}

In this section, we state our main results and prove them.
We put computational parts of the proofs together in Subsection \ref{subsec:comp_result}.
We choose a primitive element $\zeta^{(q)}$ of $\F_q$ for each $q=49$, $11$ and $13$.
Specifically, we set $\zeta^{(49)}:= - 3 - \sqrt{6}$, $\zeta^{(11)}:=2$ and $\zeta^{(13)}:=2$.

\subsection{Superspecial curves}\label{subsec:sscurves}

Our main theorems show the (non-)existence of superspecial trigonal curves of genus $5$ over $\mathbb{F}_q$ for $q=49$, $11$ and $13$.
Specifically, for $q=49$ and $q=13$, there does not exist such a curve, but for $q=11$, there exist precisely $4$ (resp.\ $1$) superspecial trigonal curves of genus $5$ over $\mathbb{F}_{11}$, up to isomorphism over $\mathbb{F}_{11}$ (resp.\ $\overline{\mathbb{F}_{11}}$).

\begin{theor}\label{MainTheorem2}
There is no superspecial trigonal curve of genus $5$ in characteristic $7$.
\end{theor}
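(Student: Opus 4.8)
The plan is to derive Theorem~\ref{MainTheorem2} from the criterion for superspeciality (Corollary~\ref{Criterion_ssp_non-split} and the non-split analogue via $H'$) together with the reductions obtained in Section~\ref{SectionReduction}. In characteristic $p=7$ we have $p-1=6$, so for each reduced quintic form $F$ from Propositions~\ref{ReductionSplitNode}, \ref{ReductionNonSplitNode} and~\ref{ReductionCusp} we must examine the degree-$36$ form $F^{6}$ (or its $X,Y,z$-twist in the non-split case) and impose that the $5$ (respectively $5\times 5$) relevant coefficients — those of the monomials $x^{7i-i'}y^{7j-j'}z^{7k-k'}$ with $(i,j,k),(i',j',k')$ ranging over $(3,1,1),(1,3,1),(2,2,1),(2,1,2),(1,2,2)$ — all vanish. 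This turns the existence question into the solvability over $\F_{49}$ of a finite list of explicit polynomial systems in the free coefficients $a_i$ (and the discrete parameters $b_i, c_i$).

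Next I would organize the case analysis by type, exactly as the reductions dictate: split node case (two sub-forms \eqref{SplitNodeReducedEq1}, \eqref{SplitNodeReducedEq2}), non-split node case (three sub-forms \eqref{NonSplitNodeReducedEq1}, \eqref{NonSplitNodeReducedEq2}, \eqref{NonSplitNodeReducedEq3} with $\epsilon$ a fixed non-square in $\F_{49}$), and cusp case (form \eqref{CuspReducedEq} with $a_1\ne 0$). For each of these, after substituting the permitted discrete values of the $b$'s and $c$'s, the remaining unknowns are the $a_i\in\F_{49}$, constrained also by $a_i$-nonvanishing conditions ($a_1\ne 0$ in the cusp case) and by the requirement that $V(F)$ have a unique singular point. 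I would then feed each system to the hybrid method: brute-force over a chosen subset of the $a_i$ (doubly, as in \cite{KH17}, \cite{KH17a}) and compute a Gr\"obner basis of the ideal generated by the Hasse--Witt entries in the remaining variables, checking that $1$ lies in the ideal — i.e.\ the system has no $\F_{49}$-solution compatible with the genuine-trigonal-curve conditions. Since the claim is non-existence, it suffices to certify emptiness of every variety in the list; the uniqueness-of-singularity test only shrinks the candidate set, so one may even work with the a priori larger system and still conclude.

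The main obstacle is purely computational: even after the reductions each quintic still carries on the order of ten free coefficients over a field of $49$ elements, and $F^{6}$ is a degree-$36$ polynomial, so naively the search space and the Gr\"obner computations are large. The craft is in choosing, for each type, which coefficients to brute-force and which to leave symbolic so that the residual ideals are small enough for Magma to decide membership of $1$; this is where the reductions of Section~\ref{SectionReduction} earn their keep, and where the bookkeeping (recording for every branch that the output set is empty) must be done carefully. All of this is deferred to Subsection~\ref{subsec:comp_result}, where the concrete systems, the brute-force choices, and the Magma log files \cite{HPkudo} are presented; the theorem then follows by exhausting the finitely many branches and finding no curve in any of them.
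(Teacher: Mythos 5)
Your overall route—reduce to the quintic models of Lemma \ref{CharacterizationTrigonal}, apply the normal forms of Propositions \ref{ReductionSplitNode}--\ref{ReductionCusp}, express superspeciality as the vanishing of the $25$ designated coefficients of $F^{6}$ (with the $P^{(p)}H'P^{-1}$ twist in the non-split case), and discharge the resulting finite list of systems by brute force plus Gr\"obner bases in Magma—is exactly the paper's proof, and the concluding remark that the uniqueness-of-singularity test only shrinks the candidate set is a correct observation for a pure non-existence claim.

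There is, however, one step you skip that the paper needs and supplies as the very first line of its proof: the theorem asserts non-existence in \emph{characteristic} $7$, i.e.\ over an arbitrary perfect field of characteristic $7$, whereas your entire computation is carried out over $\mathbb{F}_{49}$ (you take the free coefficients $a_i\in\mathbb{F}_{49}$ and test solvability over $\mathbb{F}_{49}$). As written, this only rules out superspecial trigonal curves defined over $\mathbb{F}_{49}$. The missing link is the descent argument: a superspecial curve of genus $5$ in characteristic $p$ descends to a maximal (hence still superspecial) curve over $\mathbb{F}_{p^2}$ (cf.\ \cite[Lemma 2.2.1]{KH16}), and since the $g^1_3$ is unique for $g\ge 5$ (Section \ref{UniquenessPencil}) the trigonal structure descends with it, so it suffices to certify emptiness over $\mathbb{F}_{49}$. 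You should state this reduction explicitly before fixing $q=49$; with it in place your argument coincides with the paper's. (A minor slip: the number of coefficient conditions is $25$, not $5$, in both the split/cusp and non-split cases, though your description of the monomial set is correct.)
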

\begin{proof}
It suffices to show that there is no superspecial trigonal curve of genus $5$ over $\mathbb{F}_{49}$.
Indeed, a superepecial trigonal curve of genus $5$ in characteristic $7$ descends to a maximal trigonal curve of genus $5$ over $\mathbb{F}_{49}$, which is also superspecial.
Let $C$ be a trigonal curve of genus $5$ over $K=\mathbb{F}_{49}$.
By Lemma \ref{CharacterizationTrigonal}, the trigonal curve $C$ is given as the desingularization of a quintic $C^{\prime}=V(F)$ in $\mathbf{P}^2$ having a node or a cusp which is the unique singular point of $C^{\prime}$.
Here $F$ is a quintic form in $\mathbb{F}_{49} [x,y,z]$.
By Propositions \ref{ReductionSplitNode} -- \ref{ReductionCusp}, we may also assume that $F$ is a quintic form in either of the following five cases:
\begin{enumerate}
\item[1.] Split node case (1):
	\begin{eqnarray*}
	F &=& x y z^3 + (x^3 + b_1 y^3) z^2
	+ (a_1 x^4 + a_2 x^3 y + a_3 x^2 y^2 + a_4 x y^3 + a_5 y^4) z\\
	& & + a_6 x^5 + a_7 x^4 y + a_8 x^3 y^2 + a_9 x^2 y^3 + a_{10} x y^4 + a_{11} y^5
	\end{eqnarray*}
	for $a_i \in \mathbb{F}_{49}$ with $1 \leq i \leq 11$ and for $b_1 \in \{ 0, 1,\zeta^{(49)} \}$.
\item[2.] Split node case (2): 
	\begin{eqnarray*}
	F &=& x y z^3 + (c_1 x^4 + c_2 x^3 y + a_3 x^2 y^2 + a_4 x y^3 + a_5 y^4) z\\
	& & + a_6 x^5 + a_7 x^4 y + a_8 x^3 y^2 + a_9 x^2 y^3 + a_{10} x y^4 + a_{11} y^5
	\end{eqnarray*}
	for $(c_1,c_2) = (0,0), (1,0), (0,1), (1,1), (1,\zeta^{(49)})$ and for $a_i \in \mathbb{F}_{49}$ with $3 \leq i \leq 11$.
\item[3.] Non-split node case (1):
	\begin{eqnarray*}
	F &=& (x^2-\epsilon y^2)z^3 + (x(x^2+3\epsilon y^2) + 0 \cdot y(3 x^2+\epsilon y^2)) z^2\\
	& &+ (a_1 x^4 + a_2 x^3 y + a_3 x^2 y^2 + a_4 x y^3 + a_5 y^4) z\\
	& & + a_6 x^5 + a_7 x^4 y + a_8 x^3 y^2 + a_9 x^2 y^3 + a_{10} x y^4 + a_{11} y^5
	\end{eqnarray*}
	for $a_i \in \mathbb{F}_{49}$ with $1 \leq i \leq 11$, where $\epsilon$ is an element of $\mathbb{F}_{49}^{\times} \smallsetminus (\mathbb{F}_{49}^{\times})^2$.
\item[4.] Non-split node case (2):
	\begin{eqnarray*}
	F & = & (x^2-\epsilon y^2)z^3 + (c x^4 + a_2 x^3 y + a_3 x^2 y^2 + a_4 x y^3 + a_5 y^4) z\\
	& & + a_6 x^5 + a_7 x^4 y + a_8 x^3 y^2 + a_9 x^2 y^3 + a_{10} x y^4 + a_{11} y^5
	\end{eqnarray*}
	for $c \in \{1,\zeta^{(49)} \}$ and for $a_i \in \mathbb{F}_{49}$ with $2 \leq i \leq 11$.
	\item[5.] Non-split node case (3):
	\begin{eqnarray*}
	F & = & (x^2-\epsilon y^2)z^3 + a_6 x^5 + a_7 x^4 y + a_8 x^3 y^2 + a_9 x^2 y^3 + a_{10} x y^4 + a_{11} y^5
	\end{eqnarray*}
	for $a_i \in \mathbb{F}_{49}$ with $6 \leq i \leq 11$.
\item[6.] Cusp case:
	\begin{eqnarray*}
	F &=& x^2 z^3 + a_1 y^3 z^2 + 
	(a_2 x^4 + a_3 x^3 y + a_4 x^2 y^2 + b_1 x y^3 + a_5 y^4) z\\
	&& + a_6 x^5 + a_7 x^4 y + a_8 x^3 y^2 + a_9 x^2 y^3 + b_2 x y^4 + a_{10} y^5
	\end{eqnarray*}
	for $a_i \in \mathbb{F}_{49}$ with $1 \leq i \leq 10$ and $a_1 \neq 0$, where $b_1 \in \{ 0, 1 \}$ and $b_2 \in \{ 0, 1 \}$.
\end{enumerate}
It follows from Propositions \ref{prop:Case1q7} -- \ref{prop:Case5q7} in Subsection \ref{subsec:comp_result} that for each of the above five cases, there does not exist any quintic form $F$ of the form stated in the case such that the desingularization of $V(F)$ is a superspecial trigonal curve of genus $5$.
\end{proof}

\begin{theor}\label{MainTheorem}
Any superspecial trigonal curve of genus $5$ over $\F_{11}$ is $\F_{11}$-isomorphic to the desingularization of
\begin{equation}
x y z^3 + a_1 x^5 + a_2 y^5  = 0 \label{sscurve_F11}
\end{equation}
in $\bbP^2$, where $a_1, a_2 \in \mathbb{F}_{11}^\times$, or the desingularization of
\begin{equation}
 (x^2 - \epsilon y^2) z^3 + a x^5 + b x^4 y + (9 a) x^3 y^2 + 4 b x^2 y^3 + ( 9 a ) x y^4 + 3 b y^5 =0 \label{sscurve_F11_2}
\end{equation}
in $\bbP^2$, where $\epsilon \in \mathbb{F}_{11}^\times \smallsetminus (\mathbb{F}_{11}^\times)^2$ and $(a, b) \in (\mathbb{F}_{11})^{\oplus 2} \smallsetminus \{ (0,0 )\}$.
\end{theor}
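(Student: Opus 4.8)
The plan is to run the enumeration algorithm set up in Sections~\ref{section:2} and \ref{SectionReduction} in the specific case $q=11$, and to verify that the surviving quintic forms are exactly (up to $\F_{11}$-isomorphism) the two families displayed in \eqref{sscurve_F11} and \eqref{sscurve_F11_2}. As in the proof of Theorem~\ref{MainTheorem2}, by Lemma~\ref{CharacterizationTrigonal} any superspecial trigonal curve $C$ of genus $5$ over $\F_{11}$ is the desingularization of a quintic $C'=V(F)$ with a unique singular point, of one of the three types (split node, non-split node, cusp), and by Propositions~\ref{ReductionSplitNode}--\ref{ReductionCusp} we may assume $F$ lies in one of the six reduced normal forms (split node (1) and (2), non-split node (1), (2), (3), and cusp). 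For each normal form, the criterion for superspeciality is Corollary~\ref{Criterion_ssp_non-split} in the split node and cusp cases, and the matrix $P^{(p)}H'P^{-1}$ from the non-split paragraph in the remaining case: setting every entry to zero yields a multivariate polynomial system over $\F_{11}$ in the unknown coefficients $a_i,b_i,c_i$.

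The key steps are then: (i) for each of the six cases, with $p=11$, expand $F^{p-1}=F^{10}$ and read off the relevant coefficients, thereby writing down the explicit superspeciality system; (ii) solve each system over $\F_{11}$ using the hybrid method (brute-force on a few coefficients, Gröbner bases for the rest), as indicated in the introduction, to get the full solution set; (iii) for each solution, substitute back into $F$ and check with the Jacobian criterion that $V(F)$ indeed has a \emph{unique} singular point (discarding the rest); (iv) collect the surviving $F$'s, and finally (v) reduce the resulting list under the remaining $\F_{11}$-automorphisms of $\bbP^2$ preserving the normal form, to match it with the two stated families. Step (v) is where one checks, for instance, that in the split node case only $F=xyz^3+a_1x^5+a_2y^5$ with $a_1,a_2\in\F_{11}^\times$ survives, and that in the non-split node case (2) one lands on the one-parameter family \eqref{sscurve_F11_2} with the specific coefficients $9a,4b,9a,3b$ (and that cases (1), (3) of the non-split type and both cusp subcases and split node case (1) contribute nothing). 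The actual execution of steps (i)--(iii) is deferred to the computational Propositions in Subsection~\ref{subsec:comp_result}; here the work is to assemble those outputs and perform the final isomorphism bookkeeping.

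The main obstacle is twofold. Computationally, it is step (ii): $F^{10}$ over $\F_{11}$ in three variables, with up to eleven unknown coefficients (plus the discrete parameters $b_1$, $(c_1,c_2)$, $c$, $\epsilon$), gives a large system, so the feasibility of the whole argument rests on the hybrid-method strategy and on the prior reduction of the number of indeterminates carried out in Section~\ref{SectionReduction}; without that reduction the Gröbner basis computation would not terminate in practical time. Conceptually, the subtler point is step (v): several raw solutions of the superspeciality system may define the \emph{same} curve, and one must show that the $\F_{11}$-automorphisms of $\bbP^2$ fixing the normal form (the residual torus/permutation actions used in the proofs of Propositions~\ref{ReductionSplitNode}--\ref{ReductionCusp}) identify all of them with a member of \eqref{sscurve_F11} or \eqref{sscurve_F11_2}; this, together with the length-one computation of $\delta$ in Lemma~\ref{CharacterizationTrigonal}, also confirms that the desingularizations are genuinely trigonal of genus $5$ and pairwise distinct only up to the stated parameters. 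I would finish by noting that the finer classification up to $\F_{11}$- and $\overline{\F_{11}}$-isomorphism is postponed to Proposition~\ref{prop:isom}.
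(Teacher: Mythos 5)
Your proposal follows essentially the same route as the paper: reduce to the six normal forms via Lemma \ref{CharacterizationTrigonal} and Propositions \ref{ReductionSplitNode}--\ref{ReductionCusp}, impose the superspeciality criterion (Corollary \ref{Criterion_ssp_non-split}, resp.\ the matrix $P^{(p)}H'P^{-1}$ in the non-split node case), solve the resulting systems by the hybrid method, and check unique-singularity of the survivors; the paper then simply cites the computational Propositions \ref{prop:Case1q11}--\ref{prop:Case5q11}, whose outputs are already literally the two stated families, so your extra isomorphism-bookkeeping step (v) is not actually needed for the theorem as stated (it belongs to Proposition \ref{prop:isom}). This is a presentational difference only, not a genuinely different argument.
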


\begin{proof}
Let $C$ be a trigonal curve of genus $5$ over $K=\mathbb{F}_{11}$.
By the same argument as in the proof of Theorem \ref{MainTheorem2}, the curve $C$ is assumed to be given as the desingularization of the quintic in $\mathbf{P}^2$ defined by $F \in \mathbb{F}_{11}[x,y,z]$, where $F$ is a quintic form in either of the following five cases:
\begin{enumerate}
\item[1.] Split node case (1):
	\begin{eqnarray*}
	F &=& x y z^3 + (x^3 + b_1 y^3) z^2
	+ (a_1 x^4 + a_2 x^3 y + a_3 x^2 y^2 + a_4 x y^3 + a_5 y^4) z\\
	& & + a_6 x^5 + a_7 x^4 y + a_8 x^3 y^2 + a_9 x^2 y^3 + a_{10} x y^4 + a_{11} y^5
	\end{eqnarray*}
	for $a_i \in \mathbb{F}_{11}$ with $1 \leq i \leq 11$ and for $b_1 \in \{ 0, 1 \}$.
\item[2.] Split node case (2): 
	\begin{eqnarray*}
	F &=& x y z^3 + (c_1 x^4 + c_2 x^3 y + a_3 x^2 y^2 + a_4 x y^3 + a_5 y^4) z\\
	& & + a_6 x^5 + a_7 x^4 y + a_8 x^3 y^2 + a_9 x^2 y^3 + a_{10} x y^4 + a_{11} y^5
	\end{eqnarray*}
	for $(c_1,c_2) = (0,0), (1,0), (0,1), (1,1), (1,\zeta^{(11)})$ and for $a_i \in \mathbb{F}_{11}$ with $3 \leq i \leq 11$.
\item[3.] Non-split node case (1):
	\begin{eqnarray*}
	F &=& (x^2-\epsilon y^2)z^3 + (x(x^2+3\epsilon y^2) + b y(3 x^2+\epsilon y^2)) z^2\\
	& &+ (a_1 x^4 + a_2 x^3 y + a_3 x^2 y^2 + a_4 x y^3 + a_5 y^4) z\\
	& & + a_6 x^5 + a_7 x^4 y + a_8 x^3 y^2 + a_9 x^2 y^3 + a_{10} x y^4 + a_{11} y^5
	\end{eqnarray*}
	for $a_i \in \mathbb{F}_{11}$ with $1 \leq i \leq 11$ and for $b \in \{ 0, 6, 10 \}$, where $\epsilon$ is an element of $\mathbb{F}_{11}^{\times} \smallsetminus (\mathbb{F}_{11}^{\times})^2$.
\item[4.] Non-split node case (2):
	\begin{eqnarray*}
	F & = & (x^2-\epsilon y^2)z^3 + (c x^4 + a_2 x^3 y + a_3 x^2 y^2 + a_4 x y^3 + a_5 y^4) z\\
	& & + a_6 x^5 + a_7 x^4 y + a_8 x^3 y^2 + a_9 x^2 y^3 + a_{10} x y^4 + a_{11} y^5
	\end{eqnarray*}
	for $c \in \{1,\zeta^{(11)}\}$ and for $a_i \in \mathbb{F}_{11}$ with $2 \leq i \leq 11$.
	\item[5.] Non-split node case (3):
	\begin{eqnarray*}
	F & = & (x^2-\epsilon y^2)z^3 + a_6 x^5 + a_7 x^4 y + a_8 x^3 y^2 + a_9 x^2 y^3 + a_{10} x y^4 + a_{11} y^5
	\end{eqnarray*}
	for $a_i \in \mathbb{F}_{11}$ with $6 \leq i \leq 11$.
\item[6.] Cusp case:
	\begin{eqnarray*}
	F &=& x^2 z^3 + a_1 y^3 z^2 + 
	(a_2 x^4 + a_3 x^3 y + a_4 x^2 y^2 + b_1 x y^3 + a_5 y^4) z\\
	&& + a_6 x^5 + a_7 x^4 y + a_8 x^3 y^2 + a_9 x^2 y^3 + b_2 x y^4 + a_{10} y^5
	\end{eqnarray*}
	for $a_i \in \mathbb{F}_{11}$ with $1 \leq i \leq 10$ and $a_1 \neq 0$, where $b_1 \in \{ 0, 1 \}$ and $b_2 \in \{ 0, 1 \}$.
\end{enumerate}
It follows from Propositions \ref{prop:Case1q11}, \ref{prop:Case3q11} -- \ref{prop:Case5q11} in Subsection \ref{subsec:comp_result} that for each of the cases 1, 3, 4 and 6, there does not exist any quintic form $F$ of the form stated in the case such that the desingularization of $V(F)$ is a superspecial trigonal curve of genus $5$.
By Propositions \ref{prop:Case2q11} and \ref{prop:Case4q11} in Subsection \ref{subsec:comp_result}, the claim holds.
\end{proof}

\begin{theor}\label{MainTheorem3}
There is no superspecial trigonal curve of genus $5$ over $\mathbb{F}_{13}$.
\end{theor}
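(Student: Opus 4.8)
The plan is to run the same three-step argument used for Theorems \ref{MainTheorem2} and \ref{MainTheorem}, now with $p=q=13$. Because $13$ is realized only by the exponent $a=1$, which is odd, and the number of $\overline{\mathbb{F}_{13}}$-isomorphism classes of superspecial curves depends only on the parity of $a$, it suffices to show that there is no superspecial trigonal curve of genus $5$ over $\mathbb{F}_{13}$ itself; in contrast to the characteristic-$7$ case there is no need to pass to a quadratic extension.

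First I would apply Lemma \ref{CharacterizationTrigonal}: a trigonal curve $C$ of genus $5$ over $\mathbb{F}_{13}$ is the desingularization of a quintic $C'=V(F)$ in $\bbP^2$ with a single singular point, which is a split node, a non-split node, or a cusp, and $F$ is a quintic form. Propositions \ref{ReductionSplitNode}, \ref{ReductionNonSplitNode} and \ref{ReductionCusp} then let me assume $F$ belongs to one of the six reduced families (split node (1) and (2), non-split node (1), (2) and (3), and the cusp case), exactly as displayed in the proof of Theorem \ref{MainTheorem2} with $\mathbb{F}_{49}$ replaced by $\mathbb{F}_{13}$. Since $13\equiv 1\pmod{3}$, in split node case (1) the parameter $b_1$ ranges over $\{0,1,\zeta^{(13)}\}$ with $\zeta^{(13)}=2$, while in non-split node case (1) we have $b=0$; the remaining families carry the same parameters as before.

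Next, for the split-node and cusp families I would use Corollary \ref{Criterion_ssp_non-split}: treating the $a_i$ (and any $b_j$) as indeterminates, superspeciality of the desingularization of $V(F)$ is equivalent to the simultaneous vanishing of the ten prescribed coefficients of $F^{p-1}=F^{12}$, an explicit multivariate system over $\mathbb{F}_{13}$. For the non-split-node families I would instead use the formula $H=P^{(p)}H'P^{-1}$ from Section \ref{section:2}, reading off the entries of $H'$ as coefficients of $F\big(\tfrac{X+Y}{2},\, \tfrac{X-Y}{-2\sqrt{\epsilon}},\, z\big)^{12}$ and requiring them all to vanish. Each system is then attacked by the hybrid method — brute force over a few of the coefficients combined with a Gr\"obner basis computation over $\mathbb{F}_{13}$, doubly nesting the brute force as in \cite{KH17} and \cite{KH17a} — and for every candidate root one substitutes back into $F$ and checks, via Lemma \ref{CharacterizationTrigonal}, whether $V(F)$ genuinely has a unique singular point. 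The conclusion, recorded in the Propositions of Subsection \ref{subsec:comp_result}, is that in none of the six families does such an $F$ exist, which proves the theorem.

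The obstacle here is computational, not conceptual: with $p=13$ one must expand $F^{12}$, so the polynomial systems have higher degree and more spurious structure than in the $p=7$ and $p=11$ cases, and the Gr\"obner basis steps — most acutely in non-split node case (1), which retains the largest number of free coefficients — are the bottleneck. The brute-forcing must be arranged carefully so that each Gr\"obner basis call terminates in practical time and memory, and one must make sure the final \emph{unique singularity} test is not passed vacuously by a reducible or everywhere-singular quintic.
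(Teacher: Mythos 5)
Your proposal takes essentially the same route as the paper: reduce via Lemma \ref{CharacterizationTrigonal} and Propositions \ref{ReductionSplitNode}--\ref{ReductionCusp} to the six reduced families over $\mathbb{F}_{13}$, translate superspeciality into polynomial systems using Corollary \ref{Criterion_ssp_non-split} for the split-node and cusp cases and $H=P^{(p)}H'P^{-1}$ for the non-split-node cases, and dispose of each family by the doubly-nested brute-force/Gr\"obner computation, which is exactly what Propositions \ref{prop:Case1q13}--\ref{prop:Case5q13} record. One small correction: the criterion requires the vanishing of all $25$ coefficients (the full $5\times 5$ Hasse--Witt matrix), not ten.
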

\begin{proof}
Similarly to the proof of Theorem \ref{MainTheorem2}, the claim follows from Propositions \ref{prop:Case1q13} and \ref{prop:Case2q13} -- \ref{prop:Case5q13} in Subsection \ref{subsec:comp_result}.
\end{proof}

\begin{rem}
As we mentioned in Section \ref{sec:Intro}, it is theoretically proved that there does not exist any superspecial trigonal curve of genus $5$ over $\mathbb{F}_{25}$.
We have also checked this in a way similar to the (computational) proofs of Theorems \ref{MainTheorem2} -- \ref{MainTheorem3}, but let us omit to write details in this paper.
\end{rem}

\subsection{Algorithms for enumerating superspecial trigonal curves of genus $5$}
\label{subsec:algorithm}
This subsection presents algorithms for enumerating superspecial trigonal curves of genus $5$ over $K=\mathbb{F}_q$.
Let $C$ be a trigonal curve of genus $5$ over $\mathbb{F}_q$.
As we showed in Subsection \ref{TrigonalCurvesGenus5}, the curve $C$ is given as the desingularization of a quintic $C^{\prime}$ in $\mathbf{P}^2$ having a node or a cusp which is the unique singular point of $C^{\prime}$.
Let $F$ be a quintic form in $\mathbb{F}_q[x,y,z]$ defining $C^{\prime}$.
The quintic form $F$ is written as
\begin{eqnarray}
F = \sum_{i=1}^t a_i p_i + \sum_{j=1}^u b_j q_j , \label{eq:quintic}
\end{eqnarray}
where $p_i$'s and $q_j$'s are monomials of degree $5$ or quintic forms with a few monomials, and where $a_i$'s and $b_j$'s are elements in $\mathbb{F}_q$.
Assume that each $b_j$, which is the coefficient of $q_j$, takes an element in a small range, e.g., $\{ 0, 1 \}$.
As we showed in Section \ref{SectionReduction}, the forms of $F$ have three cases: (Split node case), (Non-split node case) and (Cusp case).

Thus, for enumerating all superspecial trigonal curves of genus $5$ over $K=\mathbb{F}_q$, it suffices to enumerate irreducible quintic forms $F \in \mathbb{F}_q [x,y,z]$ of the form \eqref{eq:quintic} such that
\begin{itemize}
\item $V(F)$ has a node or a cusp which is its unique singular point, and
\item the desingularization of $V (F)$ is superspecial and it has (geometric) genus $5$,
\end{itemize}
in each of the three cases.

\subsubsection{Algorithm for (Split node case) and (Cusp case)}

Here, we give an algorithm for (Split node case) and (Cusp case).
For simplicity, we assume that $F$ is of the form $F = \sum_{i=1}^t a_i p_i$ for some quintic forms $p_i$'s in $\mathbb{F}_q[x,y,z]$.

\paragraph{Enumeration Algorithm 1}
\begin{description}
\item[{\it Input}{\rm :}] (i) A rational prime $p$, (ii) a power $q$ of the prime $p$, and (iii) a set $\{ p_1, \ldots , p_t \}$ of quintic forms in $\mathbb{F}_q [x,y,z]$.
\item[{\it Output}{\rm :}] A list of quintic forms of the form $\sum_{i=1}^t a_i p_i$ for $a_i \in \mathbb{F}_q$ with $1 \leq i \leq t$.
\end{description}
Let $\mathcal{F}:=\emptyset$, and
\begin{enumerate}
\item[{\rm (0)}]
Regard some unknown coefficients in $F = \sum_{i=1}^t a_i p_i$ as indeterminates.
Specifically, choose $1 \leq s_1 \leq t$ and indices $1 \leq k_1 < \cdots < k_{s_1} \leq t$, and then regard $a_{k_1}, \ldots , a_{k_{s_1}}$ as indeterminates.
Assume for simplicity that the first $s_1$ coefficients $a_1, \ldots , a_{s_1}$ with $s_1 \leq t$ are indeterminates here, i.e., we take $(k_1, \ldots , k_{s_1})$ to be $(1, \ldots , s_1)$.
The remaining part $(a_{s_1+1}, \ldots , a_{t})$ runs through a subset $\mathcal{A}_1 \subset (\mathbb{F}_q)^{\oplus t-s_1}$.
\end{enumerate}
For each $(c_{s_1+1}, \ldots , c_{t}) \in \mathcal{A}_1$, proceed with the following three steps:
\begin{enumerate}
	\item[{\rm (1)}] Substitute respectively $c_{s_1+1}, \ldots , c_{t}$ into $a_{s_1+1}, \ldots , a_{t}$ in $F$, and compute $h:=F^{p-1}$ over $\mathbb{F}_{q} [a_1, \ldots, a_{s_1}] [x, y, z]$.
	\item[{\rm (2)}] Regard some unknown coefficients among $a_1, \ldots , a_{s_1}$ as indeterminates.
	Specifically, choose $1 \leq s_2 \leq s_1$ and indices $1 \leq i_1 < \cdots < i_{s_2} \leq s_1$, and then regard $a_{i_1}, \ldots , a_{i_{s_2}}$ as indeterminates.
Assume for simplicity that the first $s_2$ coefficients $a_{1}, \ldots , a_{s_2}$ with $s_2 \leq s_1$ are indeterminates here, i.e., we take $(i_1, \ldots , i_{s_2})$ to be $(1, \ldots , s_2)$.
The remaining part $(a_{s_2+1}, \ldots , a_{s_1})$ runs through a subset $\mathcal{A}_2 \subset (\mathbb{F}_q)^{\oplus s_1-s_2}$.
	\item[{\rm (3)}] Let $\mathcal{S} \subset \mathbb{F}_{q} [a_1, \ldots, a_{s_1}]$ be the set of the coefficients of the $25$ monomials in $h=F^{p-1}$, given in Corollary \ref{Criterion_ssp_non-split}.
We proceed with the following three steps for each $(c_{s_2+1}, \ldots , c_{s_1} ) \in \mathcal{A}_2$: 
	\begin{enumerate}
		\item[{\rm (a)}] For each $f \in \mathcal{S}$, substitute respectively $c_{s_2+1}, \ldots , c_{s_1}$ into $a_{s_2+1}, \ldots , a_{s_1}$ in $f$.
		Put 
		\[
		\mathcal{S}^{\prime}:= \{ f (a_1, \ldots , a_{s_2}, c_{s_2+1}, \ldots , c_{s_1}) : f \in \mathcal{S} \} \cup \{ a_{s_2+1} - c_{s_2+1}, \ldots a_{s_1} - c_{s_1} \} .
		\]
		\item[{\rm (b)}] Solve the multivariate system $g  = 0$ for all $g \in \mathcal{S}^{\prime}$ over $\mathbb{F}_{q}$ with known algorithms via the Gr\"{o}bner basis computation (e.g., an algorithm given in \cite[Subsection 2.3]{KHS17}).
		\item[{\rm (c)}] For each root of the above system, substitute it into corresponding unknown coefficients in $F$, and decide whether $C^{\prime} = V ( F )$ has just one singular point or not. 
		If $C^{\prime}$ has just one singular point, replace $\mathcal{F}$ by $\mathcal{F} \cup \{ F \}$.
	\end{enumerate}
\end{enumerate}
Return $\mathcal{F}$.

\begin{prop}
With notation as above, {\rm Enumeration Algorithm 1} outputs the list of all the quintic forms of the form $F=\sum_{i=1}^t a_i p_i$ such that if $F$ is irreducible over $\overline{\mathbb{F}_q}$, the desingularizations of $V(F) \subset \mathbf{P}^2$ are superspecial trigonal curves of genus $5$ over $\mathbb{F}_q$. 
\end{prop}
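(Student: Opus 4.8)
The plan is to establish the two implications that together constitute the claim: \textbf{soundness}, that every quintic the algorithm places in $\mathcal F$ has the asserted property, and \textbf{completeness}, that every quintic $F=\sum_{i=1}^{t}a_ip_i$ of the prescribed shape whose desingularization is a superspecial trigonal curve of genus $5$ over $\mathbb F_q$ is placed in $\mathcal F$. The whole argument will pivot on the bookkeeping fact that the output $\mathcal F$ equals, exactly, the set of $F=\sum_{i=1}^{t}a_ip_i$ with $a_i\in\mathbb F_q$ such that $V(F)\subset\bbP^2$ has exactly one singular point and all $25$ coefficients of $F^{p-1}$ singled out in Corollary \ref{Criterion_ssp_non-split} vanish. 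Here one uses that, for the inputs relevant to the (split node) and (cusp) cases, the $p_i$ are chosen so that every $F=\sum a_ip_i$ is of one of the shapes \eqref{SplitNodeReducedEq1}, \eqref{SplitNodeReducedEq2}, \eqref{CuspReducedEq}; in particular the point $(0:0:1)$ is automatically a double point (a node in the split node cases, a cusp in the cusp case, where $a_1\ne 0$), so ``$V(F)$ has exactly one singular point'' is precisely ``$V(F)$ is a quintic of the form in Lemma \ref{CharacterizationTrigonal}(1) with a single singular point'', and the only thing step~(c) has to rule out is a \emph{second} singular point (any such point would force the geometric genus below $5$).

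Granting this identity, soundness is immediate from Lemma \ref{CharacterizationTrigonal} and Corollary \ref{Criterion_ssp_non-split}: if $F\in\mathcal F$ is irreducible over $\overline{\mathbb F_q}$, then $V(F)$ is an irreducible quintic with a single double point, so by Lemma \ref{CharacterizationTrigonal}(2) its desingularization $C$ is a trigonal curve of genus $5$; by Proposition \ref{HW_ssp_non-split} the Hasse--Witt matrix of $C$ is built from exactly the $25$ coefficients in question, so by Corollary \ref{Criterion_ssp_non-split} their vanishing forces $C$ to be superspecial. (If $F$ is not absolutely irreducible the conditional statement is vacuously true, which is why no irreducibility test is needed inside the loop.) Completeness is the converse use of the same two results: if $F=\sum a_ip_i$ is absolutely irreducible and its desingularization is a superspecial trigonal curve of genus $5$, then by Lemma \ref{CharacterizationTrigonal}(1) the quintic $V(F)$ must have a unique singular point, and by Corollary \ref{Criterion_ssp_non-split} all $25$ coefficients of $F^{p-1}$ vanish; hence $F$ lies in the set above, i.e.\ in $\mathcal F$.

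It remains to verify the identity, which amounts to tracing the two nested brute-force loops. Taking $\mathcal A_1$ and $\mathcal A_2$ to be the full Cartesian products $\mathbb F_q^{t-s_1}$ and $\mathbb F_q^{s_1-s_2}$, the outer loop runs over all values $(c_{s_1+1},\dots,c_t)$ of $(a_{s_1+1},\dots,a_t)$ and the inner loop over all values $(c_{s_2+1},\dots,c_{s_1})$ of $(a_{s_2+1},\dots,a_{s_1})$, so the split of the coefficients into ``brute-forced'' and ``symbolic'' parts loses nothing when combined with the output of step~(b). For fixed values of the brute-forced coordinates, $h=F^{p-1}$ lies in $\mathbb F_q[a_1,\dots,a_{s_1}][x,y,z]$ and $\mathcal S$ is the set of its $25$ relevant coefficients; the set $\mathcal S'$ of step~(a) is obtained from $\mathcal S$ by the current specialization and by adjoining the linear polynomials $a_{s_2+1}-c_{s_2+1},\dots,a_{s_1}-c_{s_1}$, whose sole purpose is to make the Gr\"obner-basis solver of step~(b) return the full vector $(a_1,\dots,a_{s_1})$ rather than only $(a_1,\dots,a_{s_2})$. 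By correctness of that solver (e.g.\ \cite[Subsection~2.3]{KHS17}), step~(b) returns exactly the $\mathbb F_q$-rational tuples $(a_1,\dots,a_{s_1})$ for which, after the current specialization, all $25$ coefficients of $F^{p-1}$ vanish; letting both loops run, one obtains precisely the tuples $(a_1,\dots,a_t)\in\mathbb F_q^{t}$ with this vanishing property. Step~(c) then discards exactly those $F$ for which $V(F)$ fails to have a single singular point --- an effective test via the Jacobian criterion on the affine charts --- and what survives is precisely the set described above.

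The part requiring the most care is not any single computation but keeping the logical chain watertight: Corollary \ref{Criterion_ssp_non-split} is a criterion for the superspeciality of the \emph{normalization} $C$ of $V(F)$ and already presupposes that $C$ is a trigonal curve of genus $5$ --- a fact that Lemma \ref{CharacterizationTrigonal}(2) supplies only when $V(F)$ has a single singular point of multiplicity $2$ --- so the test in step~(c) is genuinely needed, and the conclusion of the proposition must be (and is) phrased conditionally on the absolute irreducibility of $F$. A secondary point is to confirm that the brute-force ranges $\mathcal A_1$, $\mathcal A_2$ are taken to be the full products of the admissible ranges of the corresponding coefficients, since otherwise completeness would fail; the analogous statement for the general form $F=\sum_i a_ip_i+\sum_j b_jq_j$ follows verbatim after prepending one more outermost loop over the $b_j$.
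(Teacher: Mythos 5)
Your argument is correct and is essentially the paper's own proof, which simply states that the proposition ``follows immediately from Corollary \ref{Criterion_ssp_non-split} together with the construction of the algorithm''; you have merely spelled out the soundness/completeness bookkeeping that the paper leaves implicit. No discrepancy in approach or substance.
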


\begin{proof}
This follows immediately from Corollary \ref{Criterion_ssp_non-split} together with the construction of the algorithm.
\end{proof}

\subsubsection{Algorithm for (Non-split node case)}

We give an algorithm for the non-split node case.
For simplicity, we assume that $F$ is of the form $F = \sum_{i=1}^t a_i p_i$ for some quintic forms $p_i$'s in $\mathbb{F}_q[x,y,z]$.
Let $\epsilon$ be an element in $\mathbb{F}_q^{\times} \smallsetminus (\mathbb{F}_q^{\times})^2$.

\paragraph{Enumeration Algorithm 2}
\begin{description}
\item[{\it Input}{\rm :}] (i) A rational prime $p$, (ii) a power $q$ of the prime $p$, and (iii) a set $\{ p_1, \ldots , p_t \}$ of quintic forms in $\mathbb{F}_q [x,y,z]$.
\item[{\it Output}{\rm :}] A list of quintic forms of the form $\sum_{i=1}^t a_i p_i$ for $a_i \in \mathbb{F}_q$ with $1 \leq i \leq t$.
\end{description}
Construct the quadratic extension field $K^{\prime}:=\mathbb{F}_q [t] / \langle t^2 - \epsilon \rangle \cong \mathbb{F}_{q^2}$, where we interpret $t = \sqrt{\epsilon}$.
Compute
\[
P = \begin{pmatrix}
1 & 1 & 0 & 0 & 0\\
\sqrt{\epsilon}^{-1}& -\sqrt{\epsilon}^{-1} & 0 & 0 & 0\\
0 & 0 & 1& 0 &0\\
0 & 0 &  0& 1 & 1 \\
0 & 0 & 0 & \sqrt{\epsilon}^{-1}& -\sqrt{\epsilon}^{-1}\\
\end{pmatrix}
\]
and $P^{-1}$.
Compute $P^{(p)}$, which is the matrix obtained by taking the $p$-th power of each entry of $P$.
Let $\mathcal{F}:=\emptyset$, and
\begin{enumerate}
\item[{\rm (0)}]
Regard some unknown coefficients in $F = \sum_{i=1}^t a_i p_i$ as indeterminates.
Specifically, choose $1 \leq s_1 \leq t$ and indices $1 \leq k_1 < \cdots < k_{s_1} \leq t$, and then regard $a_{k_1}, \ldots , a_{k_{s_1}}$ as indeterminates.
Assume for simplicity that the first $s_1$ coefficients $a_1, \ldots , a_{s_1}$ with $s_1 \leq t$ are indeterminates here, i.e., we take $(k_1, \ldots , k_{s_1})$ to be $(1, \ldots , s_1)$.
The remaining part $(a_{s_1+1}, \ldots , a_{t})$ runs through a subset $\mathcal{A}_1 \subset (\mathbb{F}_q)^{\oplus t-s_1}$.
\end{enumerate}
For each $(c_{s_1+1}, \ldots , c_{t}) \in \mathcal{A}_1$, proceed with the following five steps:
\begin{enumerate}
	\item[{\rm (1)}] Substitute respectively $c_{s_1+1}, \ldots , c_{t}$ into $a_{s_1+1}, \ldots , a_{t}$ in $F$, and compute $h:=F^{p-1}$ over $K^{\prime} [a_1, \ldots, a_{s_1}] [x, y, z]$.
	\item[{\rm (2)}] Regard some unknown coefficients among $a_1, \ldots , a_{s_1}$ as indeterminates.
	Specifically, choose $1 \leq s_2 \leq s_1$ and indices $1 \leq i_1 < \cdots < i_{s_2} \leq s_1$, and then regard $a_{i_1}, \ldots , a_{i_{s_2}}$ as indeterminates.
Assume for simplicity that the first $s_2$ coefficients $a_{1}, \ldots , a_{s_2}$ with $s_2 \leq s_1$ are indeterminates here, i.e., we take $(i_1, \ldots , i_{s_2})$ to be $(1, \ldots , s_2)$.
The remaining part $(a_{s_2+1}, \ldots , a_{s_1})$ runs through a subset $\mathcal{A}_2 \subset (\mathbb{F}_q)^{\oplus s_1-s_2}$.
	\item[{\rm (3)}] Substitute $\frac{X+Y}{2}$ and $\frac{X-Y}{-2\sqrt{\epsilon}}$ into $x$ and $y$ in $h=F^{p-1}$.
	Let $h^{\prime}$ denote the polynomial transformed from $h$ by the above substitution.
	\item[{\rm (4)}] Let $H^{\prime}$ be the Hasse-Witt matrix with respect to \eqref{BasisForH'}, i.e., the $(i,j)$-entry of $H^{\prime}$ ($1 \leq i,j \leq 5$) is the coefficient of the monomial $B_i B_j^{-p}$ in $h^{\prime}$.
	Here $B_i$ for $1 \leq i \leq 5$ are given in \eqref{BasisForH'}.
	\item[{\rm (5)}] Compute $H:=P^{(p)} H^{\prime} P^{-1}$.
	Let $\mathcal{S}$ be the set of the $25$ entries of $H$.
	Note that $\mathcal{S} \subset \mathbb{F}_{q} [a_1, \ldots, a_{s_1}]$.
We proceed with the following three steps for each $(c_{s_2+1}, \ldots , c_{s_1} ) \in \mathcal{A}_2$: 
	\begin{enumerate}
		\item[{\rm (a)}] For each $f \in \mathcal{S}$, substitute respectively $c_{s_2+1}, \ldots , c_{s_1}$ into $a_{s_2+1}, \ldots , a_{s_1}$ in $f$.
		Put 
		\[
		\mathcal{S}^{\prime}:= \{ f (a_1, \ldots , a_{s_2}, c_{s_2+1}, \ldots , c_{s_1}) : f \in \mathcal{S} \} \cup \{ a_{s_2+1} - c_{s_2+1}, \ldots a_{s_1} - c_{s_1} \} .
		\]
		\item[{\rm (b)}] Solve the multivariate system $g  = 0$ for all $g \in \mathcal{S}^{\prime}$ over $\mathbb{F}_{q}$ with known algorithms via the Gr\"{o}bner basis computation.
		\item[{\rm (c)}] For each root of the above system, substitute it into corresponding unknown coefficients in $F$, and decide whether $C^{\prime} = V ( F )$ has just one singular point or not. 
		If $C^{\prime}$ has just one singular point, replace $\mathcal{F}$ by $\mathcal{F} \cup \{ F \}$.
	\end{enumerate}
\end{enumerate}
Return $\mathcal{F}$.

\begin{prop}
With notation as above, {\rm Enumeration Algorithm 2} outputs the list of all the quintic forms of the form $F=\sum_{i=1}^t a_i p_i$ such that if $F$ is irreducible over $\overline{\mathbb{F}_q}$, the desingularizations of $V(F) \subset \mathbf{P}^2$ are superspecial trigonal curves of genus $5$ over $\mathbb{F}_q$. 
\end{prop}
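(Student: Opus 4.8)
The plan is to argue exactly as in the split node and cusp cases, replacing the coefficient criterion of Corollary \ref{Criterion_ssp_non-split} by the Hasse-Witt matrix formula $H = P^{(p)} H' P^{-1}$ established for the non-split node case in Section \ref{section:2}. Concretely, I would first observe that steps (3)--(5) of Enumeration Algorithm 2 are a faithful symbolic transcription of that formula: the substitution $x \mapsto (X+Y)/2$, $y \mapsto (X-Y)/(-2\sqrt{\epsilon})$ in $h = F^{p-1}$ produces $F\!\left(\tfrac{X+Y}{2},\tfrac{X-Y}{-2\sqrt{\epsilon}},z\right)^{p-1}$; reading off the coefficient of $B_i B_j^{-p}$, with $B_i$ as in \eqref{BasisForH'}, yields the matrix $H'$; and $H := P^{(p)} H' P^{-1}$ is then, by the construction in Section \ref{section:2}, the Hasse-Witt matrix of the desingularization $C$ of $V(F)$ with respect to the $K$-basis of $H^1(C,\mathcal{O}_C)$ given by the right-hand sides of \eqref{CCM}. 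Since $C$ is superspecial if and only if its Hasse-Witt matrix vanishes, the vanishing of the $25$ entries of $H$ is exactly the superspeciality condition, now expressed as polynomial equations in the symbolic coefficients $a_1,\dots,a_{s_1}$ with coefficients in $\mathbb{F}_q$.

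Next I would check that the double brute-force does not lose any solution: the outer loop over $\mathcal{A}_1$ and the inner loop over $\mathcal{A}_2$ simply range over a partition of the relevant portion of the parameter space $(\mathbb{F}_q)^{\oplus t}$, while $a_1,\dots,a_{s_2}$ are kept as indeterminates; augmenting $\mathcal{S}$ with the linear forms $a_{s_2+1}-c_{s_2+1},\dots,a_{s_1}-c_{s_1}$ pins the brute-forced values, so the Gr\"obner basis computation in step (b) returns precisely the $\mathbb{F}_q$-points of the vanishing locus of $H$ compatible with the chosen specialization. Running over all choices, the algorithm therefore collects every $F=\sum_{i=1}^{t} a_i p_i$ for which the Hasse-Witt matrix of the desingularization of $V(F)$ is zero; step (c) then retains exactly those for which $V(F)$ has a single singular point. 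Combining this with Lemma \ref{CharacterizationTrigonal}: if such an $F$ is irreducible over $\overline{\mathbb{F}_q}$, then by part (2) its desingularization is a trigonal curve of genus $5$, and by the vanishing of $H$ it is superspecial; conversely, by part (1) together with Proposition \ref{ReductionNonSplitNode}, every superspecial trigonal curve of genus $5$ of non-split node type admits a model $V(F)$ with $F$ of the prescribed shape, so nothing is missed. This yields the claimed output.

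The one point that needs care — and which I expect to be the main (though mild) obstacle — is that the Hasse-Witt matrix is assembled from intermediate data living over the quadratic extension $K' = \mathbb{F}_q[\sqrt{\epsilon}] \cong \mathbb{F}_{q^2}$ (the entries of $H'$ and of $P$ involve $\sqrt{\epsilon}$), whereas the final matrix $H = P^{(p)} H' P^{-1}$ must have entries in $\mathbb{F}_q[a_1,\dots,a_{s_1}]$ for the subsequent computation over $\mathbb{F}_q$ to make sense. This is exactly guaranteed by the fact that the basis \eqref{CCM} is defined over $K$, i.e.\ its right-hand sides are rational functions in $x,y,z$ with coefficients in $\mathbb{F}_q$, equivalently that it is $\Gal(K'/K)$-stable; one must verify that the algorithm's arithmetic indeed lands back in $\mathbb{F}_q$ after forming $P^{(p)} H' P^{-1}$, which follows from the compatibility of the whole construction with base change along $K'/K$ noted in Section \ref{section:2}. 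Granting this, the proposition follows formally from the construction of the algorithm together with the superspeciality criterion, just as in the split node and cusp cases.
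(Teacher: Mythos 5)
Your proposal is correct and follows essentially the same route as the paper, whose entire proof is the one-line observation that the claim "follows from the superspeciality criterion together with the construction of the algorithm"; you have simply spelled out the details (the faithfulness of steps (3)--(5) to the formula $H = P^{(p)}H'P^{-1}$, the exhaustiveness of the double brute-force, and the $\mathbb{F}_q$-rationality of $H$), all of which are consistent with Section 2. No discrepancy to report.
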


\begin{proof}
This follows from Corollary \ref{Criterion_ssp_non-split} together with the construction of the algorithm.
\end{proof}

\subsection{Computational parts of our proofs of the main theorems}\label{subsec:comp_result}
In this subsection, we give computational results for the proofs of our main theorems (Theorems \ref{MainTheorem2}, \ref{MainTheorem} and \ref{MainTheorem3}).
The computational results are obtained by executing Enumeration Algorithms 1 and 2 in Subsection \ref{subsec:algorithm}.
We implemented and executed the algorithms over Magma V2.22-7 \cite{Magma} in its 64-bit version (for details on the implementation, see Subsection \ref{subsec:imple}).

\subsubsection{Split node case (1) with $q = p^2 = 49$}

\begin{prop}\label{prop:Case1q7}
Consider the quintic form
\begin{equation}
\begin{split}
F  = &  x y z^3 + (x^3 + b_1 y^3) z^2 + (a_1 x^4 + a_2 x^3 y + a_3 x^2 y^2 + a_4 x y^3 + a_5 y^4) z \\
& + a_6 x^5 + a_7 x^4 y + a_8 x^3 y^2 + a_9 x^2 y^3 + a_{10} x y^4 + a_{11} y^5,
\end{split}\label{eq:quintic1}
\end{equation}
where $a_i \in \mathbb{F}_{49}$ for $1 \leq i \leq 11$ and $b_1 \in \{ 0, 1 , \zeta^{(49)} \}$.
Then there does not exist any quintic form $F$ of the form \eqref{eq:quintic1} such that the desingularization of $V (F) \subset \mathbf{P}^2$ is a superspecial trigonal curve of genus $5$. 
\end{prop}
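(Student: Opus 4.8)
The plan is to invoke Corollary \ref{Criterion_ssp_non-split}: since we are in the split node case, the desingularization of $V(F)$ is superspecial if and only if the $25$ designated coefficients of $F^{p-1} = F^{48}$ over $\mathbb{F}_{49}$ all vanish. So the task reduces to showing that the multivariate system obtained by equating those $25$ coefficients to zero has no solution $(a_1,\ldots,a_{11}) \in \mathbb{F}_{49}^{\oplus 11}$ (for each of the three choices $b_1 \in \{0,1,\zeta^{(49)}\}$) that simultaneously yields a $V(F)$ with a single singular point of the required type. Concretely, I would run Enumeration Algorithm 1 with input $p = 7$, $q = 49$, and the list of quintic forms $p_i$ corresponding to the monomials $xyz^3$, $x^3z^2$, $y^3z^2$ (the last two packaged via $b_1$), $x^4z, x^3yz, \ldots, y^4z$, and $x^5, x^4y, \ldots, y^5$; the brute-force parameter $b_1$ plus the doubly-nested brute-force on a chosen subset of the $a_i$ keeps each Gr\"obner basis computation tractable.

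The key steps, in order, are: (1) fix $b_1 \in \{0,1,\zeta^{(49)}\}$ and designate which of $a_1,\ldots,a_{11}$ are brute-forced (outer loop $\mathcal{A}_1$) versus treated as indeterminates; (2) for each assignment in $\mathcal{A}_1$, expand $h := F^{48}$ symbolically over $\mathbb{F}_{49}[a_1,\ldots,a_{s_1}][x,y,z]$ and extract the $25$ coefficients of the monomials $x^{7i-i'}y^{7j-j'}z^{7k-k'}$ with $(i,j,k),(i',j',k')$ ranging over $(3,1,1),(1,3,1),(2,2,1),(2,1,2),(1,2,2)$; (3) perform a second brute-force (loop $\mathcal{A}_2$) on a further subset of the remaining indeterminates, appending the linear equations $a_{s_2+1}-c_{s_2+1},\ldots$ to pin those down; (4) compute a Gr\"obner basis of the resulting zero-dimensional (or checked-to-be-inconsistent) ideal and read off that the variety is empty; (5) in the rare event a root survives, substitute back into $F$ and verify via the singular-locus Jacobian criterion that $V(F)$ fails to have exactly one singular point, so it does not give a genuine genus-$5$ trigonal curve. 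Aggregating over all branches of the brute-force and all three values of $b_1$ gives the non-existence claim.

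The main obstacle is the scale of the symbolic computation: $F^{48}$ in three variables with up to eleven parameters is a large polynomial, and the coefficient ideal, even after one layer of brute-force, can be slow to resolve by Gr\"obner bases over $\mathbb{F}_{49}$. The practical remedy — already built into Enumeration Algorithm 1 via the hybrid method of \cite{BFP} and the double brute-force used in \cite{KH17}, \cite{KH17a} — is to choose the brute-forced subsets $\mathcal{A}_1, \mathcal{A}_2$ large enough that each residual Gr\"obner basis computation is cheap, at the cost of iterating over many cases; the reductions of Section \ref{SectionReduction} (which already cut $b_1$ down to three values and fixed the $z^2$-coefficient shape) are what make this feasible. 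I would therefore expect the proof to consist of: the reduction to Corollary \ref{Criterion_ssp_non-split} (immediate), plus a certified computer run of Enumeration Algorithm 1 whose output list $\mathcal{F}$ is empty — with the computation details, timings, and source references deferred to the implementation subsection. No essentially new mathematical idea beyond the Hasse-Witt criterion is needed; correctness rests on the verified exactness of the enumeration algorithm (proved in the accompanying proposition) and on the Gr\"obner basis engine.
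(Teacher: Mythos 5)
Your proposal matches the paper's proof: both reduce superspeciality to the vanishing of the $25$ coefficients of $F^{48}$ via Corollary \ref{Criterion_ssp_non-split} and then certify emptiness of the solution set by running Enumeration Algorithm 1 for each $b_1\in\{0,1,\zeta^{(49)}\}$ with a brute-force/Gr\"obner-basis hybrid (the paper's specific choice is $\mathcal{A}_1=\emptyset$, all eleven $a_i$ symbolic, and an inner brute force on $(a_1,a_3)\in(\mathbb{F}_{49})^{\oplus 2}$), followed by the single-singular-point check on any surviving roots. This is essentially the same argument, differing only in which coefficients are chosen for the inner loop.
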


\begin{proof}
Put $t = 11$, $u = 3$ and
\begin{eqnarray}
\{ p_1, \ldots , p_t \} & = & \{ x^4 z, x^3 y z, x^2 y^2 z, x y^3 z, y^4 z,  x^5, x^4 y, x^3 y^2, x^2 y^3, x y^4, y^5 \}, \nonumber \\
\{ q_1, \ldots , q_u \} & = & \{ y^3 z^2, x y z^3, x^3 z^2 \}. \nonumber 
\end{eqnarray}
For each $b_1 \in \{ 0, 1, \zeta^{(49)} \}$, execute Enumeration Algorithm 1 given in Subsection \ref{subsec:algorithm}.
We here give an outline of our computation together with our choices of $s_1$, $s_2$, $\{ k_1, \ldots , k_{s_1} \}$, $\{ i_1, \ldots , i_{s_2} \}$, $\mathcal{A}_1$, $\mathcal{A}_2$ and a term ordering in the algorithm.
\begin{enumerate}
\item[{\rm (0)}]
We set $s_1:= 11$, and $( k_1, \ldots , k_{s_1} ):= ( 1, \ldots , 11)$ (we regard the $11$ coefficients $a_1$, $a_2$, $a_3$, $a_4$, $a_5$, $a_6$, $a_7$, $a_8$, $a_9$, $a_{10}$ and $a_{11}$ as indeterminates).
Let $\mathcal{A}_1:=\emptyset$.
\end{enumerate}
We proceed with the following three steps:
\begin{enumerate}
	\item[{\rm (1)}] Compute $F:= \sum_{i=1}^t a_i p_i + \sum_{j=1}^u b_j q_j$ and $h:= ( F )^{p-1}$ over $\mathbb{F}_{49} [a_1, \ldots, a_{11}] [x, y, z]$, where $a_1, \ldots, a_{11}$ are indeterminates, and $(b_2, b_3)=(1,1)$.
	\item[{\rm (2)}] We set $s_2 := 9$, and $(i_1, \ldots , i_{s_2}) := ( 2, 4, 5, 6, 7, 8, 9, 10, 11)$ (we regard the $9$ coefficients $a_2$, $a_4$, $a_5$, $a_6$, $a_7$, $a_8$, $a_9$, $a_{10}$ and $a_{11}$ as indeterminates).
	For the Gr\"{o}bner basis computation in $\mathbb{F}_{49} [ a_1, a_2, a_3, a_4, a_5, a_6, a_7, a_8, a_9, a_{10}, a_{11}]$ below, we adopt the graded reverse lexicographic (grevlex) order with
\[
a_{11} \prec a_{10} \prec a_9 \prec a_8 \prec a_7 \prec a_6 \prec a_5 \prec a_4 \prec a_3 \prec a_2 \prec a_1,
\]
	whereas for that in $\mathbb{F}_{49} [ x, y, z]$, the grevlex order with $z \prec y \prec x$ is adopted.
	Put $\mathcal{A}_2 := ( \mathbb{F}_{49} )^{\oplus 2}$.
	\item[{\rm (3)}] Let $\mathcal{S} \subset \mathbb{F}_{49} [a_1, \ldots, a_{11}]$ be the set of the coefficients of the $25$ monomials in $h$, given in Corollary \ref{Criterion_ssp_non-split}.
We proceed with the following three steps for each $( c_1, c_3 ) \in \mathcal{A}_2= ( \mathbb{F}_{49} )^{\oplus 2}$: 
	\begin{enumerate}
		\item[{\rm (a)}] For each $f \in \mathcal{S}$, substitute respectively $c_1$ and $c_3$ into $a_1$ and $a_3$ in $f$.
		Put 
		\[
		\mathcal{S}^{\prime}:= \{ f (c_1, a_2, c_3, a_4, \ldots , a_{11}) : f \in \mathcal{S} \} \cup \{ a_1 - c_1, a_3 - c_3 \} .
		\]
		\item[{\rm (b)}] Solve the multivariate system $g  = 0$ for all $g \in \mathcal{S}^{\prime}$ over $\mathbb{F}_{49}$ with known algorithms via the Gr\"{o}bner basis computation.
		\item[{\rm (c)}] For each root of the above system, substitute it into corresponding unknown coefficients in $F$, and decide whether $C = V ( F )$ has just one node or not. 
	\end{enumerate}
\end{enumerate}
From the outputs, we have that there does not exist any quintic form $F$ of the form \eqref{eq:quintic1} such that the desingularization of $V (F) \subset \mathbf{P}^2$ is a superspecial trigonal curve of genus $5$. 
\end{proof}

\subsubsection{Split node case (2) with $q = p^2 = 49$}

\begin{prop}\label{prop:Case2q7}
Consider the quintic form
\begin{equation}
\begin{split}
F  = &  x y z^3 + (c_1 x^4 + c_2 x^3 y + a_3 x^2 y^2 + a_4 x y^3 + a_5 y^4) z \\
& + a_6 x^5 + a_7 x^4 y + a_8 x^3 y^2 + a_9 x^2 y^3 + a_{10} x y^4 + a_{11} y^5,
\end{split}\label{eq:quintic2}
\end{equation}
where $a_i \in \mathbb{F}_{49}$ for $3 \leq i \leq 11$ and $(c_1, c_2) \in \{ (0,0), (1,0), (0,1), (1,1), (1,\zeta^{(49)}) \}$.
Then there does not exist any quintic form $F$ of the form \eqref{eq:quintic2} such that the desingularization of $V (F) \subset \mathbf{P}^2$ is a superspecial trigonal curve of genus $5$. 
\end{prop}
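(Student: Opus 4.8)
The plan is to prove this in the same way as Proposition~\ref{prop:Case1q7}, by running Enumeration Algorithm~1 with the monomial data adapted to \eqref{eq:quintic2}. Since the quintic in \eqref{eq:quintic2} has the form \eqref{SplitNodeReducedEq2}, the trigonal curve $C$ is of split node type, so $H^1(C,\mathcal{O}_C)$ is the quotient of $H^2(\mathbf{P}^2,\mathcal{O}_{\mathbf{P}^2}(-5))$ by the line spanned by $1/(xyz^3)$, and by Corollary~\ref{Criterion_ssp_non-split} the curve $C$ is superspecial if and only if the $25$ coefficients of the monomials $x^{7i-i'}y^{7j-j'}z^{7k-k'}$ in $F^{6}$ all vanish, where $(i,j,k)$ and $(i',j',k')$ run over $(3,1,1),(1,3,1),(2,2,1),(2,1,2),(1,2,2)$. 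Thus enumerating the superspecial examples reduces, for each admissible pair $(c_1,c_2)$, to solving a polynomial system in the nine free coefficients $a_3,\dots,a_{11}$, together with a check that $V(F)$ has a single node (which, by Lemma~\ref{CharacterizationTrigonal}, makes the desingularization a genus-$5$ trigonal curve).

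Concretely I would take $u=3$ with $\{q_1,q_2,q_3\}=\{x^4z,\ x^3yz,\ xyz^3\}$ and $t=9$ with $\{p_1,\dots,p_9\}=\{x^2y^2z,\ xy^3z,\ y^4z,\ x^5,\ x^4y,\ x^3y^2,\ x^2y^3,\ xy^4,\ y^5\}$, and for each of the five pairs $(c_1,c_2)\in\{(0,0),(1,0),(0,1),(1,1),(1,\zeta^{(49)})\}$ set $(b_1,b_2)=(c_1,c_2)$ and $b_3=1$. Following the brute-force split used in Proposition~\ref{prop:Case1q7}, at step~(0) I would declare all nine coefficients $a_3,\dots,a_{11}$ indeterminate ($s_1=9$, $\mathcal{A}_1=\emptyset$); at step~(1) compute $h=F^{6}$ over $\mathbb{F}_{49}[a_3,\dots,a_{11}][x,y,z]$; at step~(2) keep seven of these as Gr\"{o}bner basis indeterminates and let the remaining two run through $\mathcal{A}_2=(\mathbb{F}_{49})^{\oplus 2}$, using a graded reverse lexicographic order on the $a_i$ and on $x,y,z$ with $z\prec y\prec x$; and at step~(3) form the set $\mathcal{S}$ of the $25$ coefficients singled out by Corollary~\ref{Criterion_ssp_non-split}, solve $\mathcal{S}^{\prime}=0$ by a Gr\"{o}bner basis computation for each specialization in $\mathcal{A}_2$, and for every root decide whether $V(F)$ has a single node. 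The expected output is that no specialization, over any of the five pairs, yields a quintic whose desingularization is a superspecial trigonal curve of genus $5$, which is precisely the assertion; correctness of this reduction is immediate from Corollary~\ref{Criterion_ssp_non-split} and the description of Enumeration Algorithm~1.

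The main obstacle is purely computational: $F^{6}$ is a polynomial of degree $30$ in $x,y,z$ whose coefficients lie in a polynomial ring in nine variables over $\mathbb{F}_{49}$, so the Gr\"{o}bner basis computations of step~(3)---repeated over the $49^{2}$ specializations in $\mathcal{A}_2$ and over the five pairs $(c_1,c_2)$---must be arranged so that the whole run terminates in practice. As in the genus-$4$ enumerations of \cite{KH17} and \cite{KH17a}, feasibility hinges on a good choice of which two coefficients to brute-force and of the term orders; with a suitable choice the computation completes, and once the resulting output list is seen to contain no valid $F$ the proposition follows.
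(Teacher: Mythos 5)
Your proposal is correct and follows essentially the same route as the paper: reduce superspeciality to the vanishing of the $25$ coefficients singled out by Corollary~\ref{Criterion_ssp_non-split}, run Enumeration Algorithm~1 for each of the five pairs $(c_1,c_2)$, and check the single-node condition on every root. The only divergence is a tuning choice inside the algorithm: the paper takes $\mathcal{A}_2$ to be the five-element set of pairs $(c_1,c_2)$ itself and solves a single Gr\"obner system in all nine free coefficients $a_3,\dots,a_{11}$ per pair, whereas you additionally brute-force two of the $a_i$ over $(\mathbb{F}_{49})^{\oplus 2}$, which is equally valid but multiplies the number of Gr\"obner computations by $49^2$.
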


\begin{proof}
Put $t = 11$, $u = 1$ and
\begin{eqnarray}
\{ p_1, \ldots , p_t \} & = & \{  x^4 z, x^3 y z, x^2 y^2 z, x y^3 z, y^4 z,  x^5, x^4 y, x^3 y^2, x^2 y^3, x y^4, y^5 \}, \nonumber \\
\{ q_1, \ldots , q_u \} & = & \{ x y z^3 \}. \nonumber 
\end{eqnarray}
Execute Enumeration Algorithm 1 given in Subsection \ref{subsec:algorithm}.
We here give an outline of our computation together with our choices of $s_1$, $s_2$, $\{ k_1, \ldots , k_{s_1} \}$, $\{ i_1, \ldots , i_{s_2} \}$, $\mathcal{A}_1$, $\mathcal{A}_2$ and a term ordering in the algorithm.
\begin{enumerate}
\item[{\rm (0)}]
We set $s_1:= 11$, and $( k_1, \ldots , k_{s_1} ):= ( 1, \ldots , 11)$ (we regard the $11$ coefficients $a_1$, $a_2$, $a_3$, $a_4$, $a_5$, $a_6$, $a_7$, $a_8$, $a_9$, $a_{10}$ and $a_{11}$ as indeterminates).
Let $\mathcal{A}_1:=\emptyset$.
\end{enumerate}
We proceed with the following three steps:
\begin{enumerate}
	\item[{\rm (1)}] Compute $F:= \sum_{i=1}^t a_i p_i + \sum_{j=1}^u b_j q_j$ and $h:= ( F )^{p-1}$ over $\mathbb{F}_{49} [a_1, \ldots, a_{11}] [x, y, z]$, where $a_1, \ldots, a_{11}$ are indeterminates and $b_1=1$. 
	\item[{\rm (2)}] We set $s_2 := 9$, and $(i_1, \ldots , i_{s_2}) := ( 3, 4, 5, 6, 7, 8, 9, 10, 11)$ (we regard the $9$ coefficients $a_3$, $a_4$, $a_5$, $a_6$, $a_7$, $a_8$, $a_9$, $a_{10}$ and $a_{11}$ as indeterminates).
	For the Gr\"{o}bner basis computation in $\mathbb{F}_{49} [ a_1, a_2, a_3, a_4, a_5, a_6, a_7, a_8, a_9, a_{10}, a_{11}]$ in (3), we adopt the graded reverse lexicographic (grevlex) order with
\[
a_{11} \prec a_{10} \prec a_9 \prec a_8 \prec a_7 \prec a_6 \prec a_5 \prec a_4 \prec a_3 \prec a_2 \prec a_1,
\]
	whereas for that in $\mathbb{F}_{49} [ x, y, z]$, the grevlex order with $z \prec y \prec x$ is adopted.
	Put $\mathcal{A}_2 := \{ (0,0), (1,0), (0,1), (1,1), (1,\zeta^{(49)}) \} $.
	\item[{\rm (3)}] Let $\mathcal{S} \subset \mathbb{F}_{49} [a_1, \ldots, a_{11}]$ be the set of the coefficients of the $25$ monomials in $h$, given in Corollary \ref{Criterion_ssp_non-split}.
	For each $( c_1, c_2 ) \in \mathcal{A}_2$, conduct procedures similar to the proof of Proposition \ref{prop:Case1q7}.
\end{enumerate}
From the outputs, we have that there does not exist any quintic form $F$ of the form \eqref{eq:quintic2} such that the desingularization of $V (F) \subset \mathbf{P}^2$ is a superspecial trigonal curve of genus $5$. 
\end{proof}

\subsubsection{Non-split node case (1) with $q = p^2 = 49$}

\begin{prop}\label{prop:Case3q7}
Consider the quintic form
\begin{equation}
\begin{split}
F  = &  (x^2 - \epsilon y^2 ) z^3 + x (x^2 + 3 \epsilon y^2 ) z^2 + 0 \cdot y (3 x^2 + \epsilon y^2 ) z^2 \\
& + (a_1 x^4 + a_2 x^3 y + a_3 x^2 y^2 + a_4 x y^3 + a_5 y^4) z \\
& + a_6 x^5 + a_7 x^4 y + a_8 x^3 y^2 + a_9 x^2 y^3 + a_{10} x y^4 + a_{11} y^5,
\end{split}\label{eq:quintic3}
\end{equation}
where $a_i \in \mathbb{F}_{49}$ for $1 \leq i \leq 11$, and $\epsilon \in \mathbb{F}_{49}^{\times} \smallsetminus (\mathbb{F}_{49}^{\times})^2$.
Then there does not exist any quintic form $F$ of the form \eqref{eq:quintic3} such that the desingularization of $V (F) \subset \mathbf{P}^2$ is a superspecial trigonal curve of genus $5$. 
\end{prop}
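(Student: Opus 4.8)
The plan is to run Enumeration Algorithm 2 of Subsection \ref{subsec:algorithm}, in complete analogy with the way Propositions \ref{prop:Case1q7} and \ref{prop:Case2q7} invoke Enumeration Algorithm 1 for the split‑node cases. First I would write $F$ in the shape \eqref{eq:quintic} by taking $\{p_1,\dots,p_{11}\}=\{x^4z,\,x^3yz,\,x^2y^2z,\,xy^3z,\,y^4z,\,x^5,\,x^4y,\,x^3y^2,\,x^2y^3,\,xy^4,\,y^5\}$ with indeterminate coefficients $a_1,\dots,a_{11}$, and putting the fixed part $(x^2-\epsilon y^2)z^3+x(x^2+3\epsilon y^2)z^2$ into the $b_jq_j$ with all $b_j$ prescribed; since the parameter $b$ of Proposition \ref{ReductionNonSplitNode}\,(1) equals $0$ here, there is no brute force over it. Because $\epsilon$ is a non‑square in $\mathbb{F}_{49}$, the Hasse--Witt matrix only makes sense over $K'=\mathbb{F}_{49}[t]/(t^2-\epsilon)\cong\mathbb{F}_{49^2}$: I would compute $h:=F^{p-1}=F^{6}$ over $K'[a_1,\dots,a_{s_1}][x,y,z]$, substitute $x\mapsto\frac{X+Y}{2}$ and $y\mapsto\frac{X-Y}{-2\sqrt{\epsilon}}$ to obtain $h'$, read off the matrix $H'$ whose $(i,j)$-entry is the coefficient of $B_iB_j^{-p}$ in $h'$ for the basis \eqref{BasisForH'}, and form $H=P^{(p)}H'P^{-1}$ with $P$ the displayed coordinate‑change matrix, whose entries land back in $\mathbb{F}_{49}[a_1,\dots,a_{11}]$. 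By Corollary \ref{Criterion_ssp_non-split} the desingularization of $V(F)$ is superspecial exactly when all $25$ entries of $H$ vanish.

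Next I would solve the polynomial system requiring all entries of $H$ to vanish, in $a_1,\dots,a_{11}$ over $\mathbb{F}_{49}$, by the hybrid method: keep $s_1=11$ coefficients symbolic at the outer stage (so $\mathcal{A}_1=\emptyset$), then choose $s_2$ of them to stay symbolic while the remaining $11-s_2$ run over a set $\mathcal{A}_2\subseteq\mathbb{F}_{49}^{\oplus(11-s_2)}$; for each such tuple, substitute and compute a Gr\"obner basis of the specialized ideal (grevlex on the $a_i$, grevlex with $z\prec y\prec x$ for the polynomial‑ring arithmetic). For every common zero produced I would substitute back into $F$ and test whether $V(F)\subset\mathbf{P}^2$ has exactly one singular point; by Lemma \ref{CharacterizationTrigonal} this last test is precisely what guarantees the desingularization is a genus‑$5$ trigonal curve. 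The assertion to verify is that, after exhausting all of $\mathcal{A}_2$, the output list $\mathcal F$ is empty.

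The difficulty is purely computational, not conceptual. Unlike the split‑node cases, here step (1) and the coefficient extraction are heavier because they take place over $K'\cong\mathbb{F}_{7^4}$, and the outer brute force ranges over $\mathbb{F}_{49}$, whose size multiplies the number of Gr\"obner‑basis calls. The main thing to get right is the choice of $s_2$ and of which $a_i$ to brute‑force, so that each specialized ideal is small enough to resolve quickly while the total branch count stays tractable; once the variable layout is fixed, forming $F^{6}$, performing the linear substitution, and conjugating by $P$ are all mechanical. As in the genus‑$4$ analogues of \cite{KH17} and \cite{KH17a}, I expect that one suitably chosen double brute force suffices to finish in practical time and returns an empty list, thereby proving the proposition.
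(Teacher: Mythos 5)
Your proposal is correct and follows essentially the same route as the paper: the authors likewise cast $F$ as $\sum a_ip_i+\sum b_jq_j$ with the eleven coefficients $a_1,\dots,a_{11}$ symbolic, run Enumeration Algorithm 2 over $K'=\mathbb{F}_{49}[T]/(T^2-\epsilon)$, form $H=P^{(7)}H'P^{-1}$, and apply the double brute force (their concrete choice is $s_2=9$, ranging $(a_2,a_4)$ over $\mathbb{F}_{49}^{\oplus 2}$) before checking the unique-singular-point condition and observing that the output list is empty. The only detail you leave open, the specific choice of which coefficients to specialize, is exactly the implementation parameter the paper fixes.
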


\begin{proof}
Put $t = 11$, $u = 3$ and
\begin{eqnarray}
\{ p_1, \ldots , p_t \} & = & \{ x^4 z, x^3 y z, x^2 y^2 z, x y^3 z, y^4 z,  x^5, x^4 y, x^3 y^2, x^2 y^3, x y^4, y^5 \}, \nonumber \\
\{ q_1, \ldots , q_u \} & = & \{ (x^2 - \epsilon y^2 ) z^3,  x (x^2 + 3 \epsilon y^2 ) z^2, y (3 x^2 + \epsilon y^2 ) z^2 \}. \nonumber 
\end{eqnarray}
Execute Enumeration Algorithm 2 given in Subsection \ref{subsec:algorithm}.
We here give an outline of our computation together with our choices of $s_1$, $s_2$, $\{ k_1, \ldots , k_{s_1} \}$, $\{ i_1, \ldots , i_{s_2} \}$, $\mathcal{A}_1$, $\mathcal{A}_2$ and a term ordering in the algorithm.

First construct the quadratic extension field $K^{\prime}:=\mathbb{F}_{49} [T] / \langle T^2 - \epsilon \rangle \cong \mathbb{F}_{49^2}$, where we interpret $T = \sqrt{\epsilon}$.
Compute
\[
P = \begin{pmatrix}
1 & 1 & 0 & 0 & 0\\
\sqrt{\epsilon}^{-1}& -\sqrt{\epsilon}^{-1} & 0 & 0 & 0\\
0 & 0 & 1& 0 &0\\
0 & 0 &  0& 1 & 1 \\
0 & 0 & 0 & \sqrt{\epsilon}^{-1}& -\sqrt{\epsilon}^{-1}\\
\end{pmatrix}
\]
and $P^{-1}$.
Compute $P^{(7)}$, which is the matrix obtained by taking the $7$-th power of each entry of $P$.
\begin{enumerate}
\item[{\rm (0)}]
We set $s_1:= 11$, and $( k_1, \ldots , k_{s_1} ):= ( 1, \ldots , 11)$ (we regard the $11$ coefficients $a_1$, $a_2$, $a_3$, $a_4$, $a_5$, $a_6$, $a_7$, $a_8$, $a_9$, $a_{10}$ and $a_{11}$ as indeterminates).
Let $\mathcal{A}_1:=\emptyset$.
\end{enumerate}
We proceed with the following five steps:
\begin{enumerate}
	\item[{\rm (1)}] Compute $F:= \sum_{i=1}^t a_i p_i + \sum_{j=1}^u b_j q_j$ and $h:= ( F )^{p-1}$ over $\mathbb{F}_{49} [a_1, \ldots, a_{11}] [x, y, z]$, where $a_1, \ldots, a_{11}$ are indeterminates and $(b_1, b_2, b_3)=(1,1,0)$.
	\item[{\rm (2)}] We set $s_2 := 9$, and $(i_1, \ldots , i_{s_2}) := ( 1, 3, 5, 6, 7, 8, 9, 10, 11)$ (we regard the $9$ coefficients $a_1$, $a_3$, $a_5$, $a_6$, $a_7$, $a_8$, $a_9$, $a_{10}$ and $a_{11}$ as indeterminates).
	For the Gr\"{o}bner basis computation in $\mathbb{F}_{49} [ a_1, a_2, a_3, a_4, a_5, a_6, a_7, a_8, a_9, a_{10}, a_{11}]$ in (5), we adopt the graded reverse lexicographic (grevlex) order with
\[
a_{11} \prec a_{10} \prec a_9 \prec a_8 \prec a_7 \prec a_6 \prec a_5 \prec a_4 \prec a_3 \prec a_2 \prec a_1,
\]
	whereas for that in $\mathbb{F}_{49} [ x, y, z]$, the grevlex order with $z \prec y \prec x$ is adopted.
	Put $\mathcal{A}_2 := ( \mathbb{F}_{49} )^{\oplus 2}$.
	\item[{\rm (3)}] Substitute $\frac{X+Y}{2}$ and $\frac{X-Y}{-2\sqrt{\epsilon}}$ into $x$ and $y$ in $h$.
	Let $h^{\prime}$ denote the polynomial transformed from $h$ by the above substitution.
	\item[{\rm (4)}] Let $H^{\prime}$ be the Hasse-Witt matrix with respect to \eqref{BasisForH'}, i.e., the $(i,j)$-entry of $H^{\prime}$ ($1 \leq i,j \leq 5$) is the coefficient of the monomial $B_i B_j^{-p}$ in $h^{\prime}$.
	Here $B_i$ for $1 \leq i \leq 5$ are given in \eqref{BasisForH'}.
	\item[{\rm (5)}] Compute $H:=P^{(7)} H^{\prime} P^{-1}$.
	Let $\mathcal{S}$ be the set of the $25$ entries of $H$.
	Note that $\mathcal{S} \subset \mathbb{F}_{49} [a_1, \ldots, a_{11}]$.
	For each $( c_2, c_4 ) \in \mathcal{A}_2$, conduct procedures similar to the proof of Proposition \ref{prop:Case1q7}.
	\end{enumerate}
From the outputs, we have that there does not exist any quintic form $F$ of the form \eqref{eq:quintic3} such that the desingularization of $V (F) \subset \mathbf{P}^2$ is a superspecial trigonal curve of genus $5$. 
\end{proof}

\subsubsection{Non-split node case (2) and (3) with $q = p^2 = 49$}

\begin{prop}\label{prop:Case4q7}
Consider the quintic form
\begin{equation}
\begin{split}
F  = &  (x^2 - \epsilon y^2 ) z^3 + (c x^4 + a_2 x^3 y + a_3 x^2 y^2 + a_4 x y^3 + a_5 y^4) z \\
& + a_6 x^5 + a_7 x^4 y + a_8 x^3 y^2 + a_9 x^2 y^3 + a_{10} x y^4 + a_{11} y^5,
\end{split}\label{eq:quintic4}
\end{equation}
where $a_i \in \mathbb{F}_{49}$ for $2 \leq i \leq 11$, $c \in \{ 0, 1, \zeta^{(49)} \}$ and $\epsilon \in \mathbb{F}_{49}^{\times} \smallsetminus (\mathbb{F}_{49}^{\times})^2$.
Then there does not exist any quintic form $F$ of the form \eqref{eq:quintic4} such that the desingularization of $V (F) \subset \mathbf{P}^2$ is a superspecial trigonal curve of genus $5$. 
\end{prop}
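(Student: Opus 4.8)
The proof will be computational, along the lines of Proposition \ref{prop:Case3q7}. The plan is to run Enumeration Algorithm 2 of Subsection \ref{subsec:algorithm}, whose correctness rests on Corollary \ref{Criterion_ssp_non-split} and on the formula $H = P^{(p)} H' P^{-1}$ for the Hasse--Witt matrix in the non-split node case. Concretely I would take $t = 11$, $u = 1$, let $\{ p_1, \ldots , p_{11} \}$ be the degree-$5$ monomials not divisible by $z^3$, namely $\{ x^4 z, x^3 y z, x^2 y^2 z, x y^3 z, y^4 z, x^5, x^4 y, x^3 y^2, x^2 y^3, x y^4, y^5 \}$, and $q_1 = (x^2 - \epsilon y^2) z^3$, so that $F = \sum_{i=1}^{11} a_i p_i + q_1$ with $a_1 = c$. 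Letting $c$ range over $\{0, 1, \zeta^{(49)}\}$ treats Non-split node case (2) (where $c \in \{1, \zeta^{(49)}\}$) and Non-split node case (3) (the instance $c = 0$, $a_2 = \cdots = a_5 = 0$) in a single sweep.

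For each fixed $c$ I would then carry out the prescribed steps of Enumeration Algorithm 2 over $K' = \mathbb{F}_{49}[T] / \langle T^2 - \epsilon \rangle$: expand $h = F^{p-1}$ over $K'[a_2, \ldots, a_{11}][x,y,z]$, apply the substitution $x \mapsto (X+Y)/2$, $y \mapsto (X-Y)/(-2\sqrt{\epsilon})$ to obtain $h'$, read off $H'$ as the matrix whose $(i,j)$-entry is the coefficient of $B_i B_j^{-p}$ in $h'$ with $B_i$ as in \eqref{BasisForH'}, and form $H = P^{(7)} H' P^{-1}$. The $25$ entries of $H$ lie in $\mathbb{F}_{49}[a_2, \ldots, a_{11}]$ despite the intermediate passage through $K'$, and their common zeros parametrize the superspecial members. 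I would solve this system over $\mathbb{F}_{49}$ by the hybrid method --- brute-forcing a well-chosen subset of the $a_i$, keeping the remaining ones as indeterminates, and using a grevlex order on both the coefficient ring and $\mathbb{F}_{49}[x,y,z]$ --- and, for each solution, substitute back into $F$ and test that $V(F)$ has exactly one singular point and geometric genus $5$.

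The genuine difficulty, exactly as in the earlier cases, is making the elimination terminate in practice: even after the reductions of Section \ref{SectionReduction} the system has ten free coefficients over $\mathbb{F}_{49}$, which is too large for a single Gr\"{o}bner basis computation. The crux is therefore the choice of the hybrid split --- how many and which of the $a_i$ to fix by exhaustive search versus keep symbolic --- so that each branch is cheap while the number of branches remains manageable; I expect a split analogous to that of Proposition \ref{prop:Case3q7}, say $s_1 = 10$, $\mathcal{A}_1 = \emptyset$, and $s_2$ around $8$ with two coefficients brute-forced, to suffice. Once the resulting finite list of candidate quintics has been produced for all $c \in \{0, 1, \zeta^{(49)}\}$, one inspects it and checks that no candidate with $F$ irreducible over $\overline{\mathbb{F}_{49}}$ has desingularization a superspecial trigonal curve of genus $5$; this proves the proposition.
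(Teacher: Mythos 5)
Your proposal matches the paper's proof in all essentials: the paper also runs Enumeration Algorithm 2 with $t=11$, $u=1$, the same monomial sets, the same passage through $K'=\mathbb{F}_{49}[T]/\langle T^2-\epsilon\rangle$ and the formula $H=P^{(7)}H'P^{-1}$, and resolves the system by the hybrid method, brute-forcing $a_1=c$ over $\{0,1,\zeta^{(49)}\}$ together with $a_2$ over $\mathbb{F}_{49}$ while keeping $a_3,\dots,a_{11}$ symbolic ($s_2=9$ rather than your suggested $8$, a difference of implementation tuning only). Since the content of the proposition is the outcome of this computation, your plan is the paper's proof up to these minor parameter choices.
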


\begin{proof}
Put $t = 11$, $u = 1$ and
\begin{eqnarray}
\{ p_1, \ldots , p_t \} & = & \{ x^4 z, x^3 y z, x^2 y^2 z, x y^3 z, y^4 z,  x^5, x^4 y, x^3 y^2, x^2 y^3, x y^4, y^5 \}, \nonumber \\
\{ q_1, \ldots , q_u \} & = & \{ (x^2 - \epsilon y^2 ) z^3 \}. \nonumber 
\end{eqnarray}
Execute Enumeration Algorithm 2 given in Subsection \ref{subsec:algorithm}.
We here give an outline of our computation together with our choices of $s_1$, $s_2$, $\{ k_1, \ldots , k_{s_1} \}$, $\{ i_1, \ldots , i_{s_2} \}$, $\mathcal{A}_1$, $\mathcal{A}_2$ and a term ordering in the algorithm.

First construct the quadratic extension field $K^{\prime}:=\mathbb{F}_{49} [T] / \langle T^2 - \epsilon \rangle \cong \mathbb{F}_{49^2}$, where we interpret $T = \sqrt{\epsilon}$.
Compute
\[
P = \begin{pmatrix}
1 & 1 & 0 & 0 & 0\\
\sqrt{\epsilon}^{-1}& -\sqrt{\epsilon}^{-1} & 0 & 0 & 0\\
0 & 0 & 1& 0 &0\\
0 & 0 &  0& 1 & 1 \\
0 & 0 & 0 & \sqrt{\epsilon}^{-1}& -\sqrt{\epsilon}^{-1}\\
\end{pmatrix}
\]
and $P^{-1}$.
Compute $P^{(7)}$, which is the matrix obtained by taking the $7$-th power of each entry of $P$.
\begin{enumerate}
\item[{\rm (0)}]
We set $s_1:= 11$, and $( k_1, \ldots , k_{s_1} ):= ( 1, \ldots , 11)$ (we regard the $11$ coefficients $a_1$, $a_2$, $a_3$, $a_4$, $a_5$, $a_6$, $a_7$, $a_8$, $a_9$, $a_{10}$ and $a_{11}$ as indeterminates).
Let $\mathcal{A}_1:=\emptyset$.
\end{enumerate}
We proceed with the following five steps:
\begin{enumerate}
	\item[{\rm (1)}] Compute $F:= \sum_{i=1}^t a_i p_i + \sum_{j=1}^u b_j q_j$ and $h:= ( F )^{p-1}$ over $\mathbb{F}_{49} [a_1, \ldots, a_{11}] [x, y, z]$, where $a_1, \ldots, a_{11}$ are indeterminates and $b_1=1$.
	\item[{\rm (2)}] We set $s_2 := 9$, and $(i_1, \ldots , i_{s_2}) := ( 3, 4, 5, 6, 7, 8, 9, 10, 11)$ (we regard the $9$ coefficients $a_3$, $a_4$, $a_5$, $a_6$, $a_7$, $a_8$, $a_9$, $a_{10}$ and $a_{11}$ as indeterminates).
	For the Gr\"{o}bner basis computation in $\mathbb{F}_{49} [ a_1, a_2, a_3, a_4, a_5, a_6, a_7, a_8, a_9, a_{10}, a_{11}]$ in (5), we adopt the graded reverse lexicographic (grevlex) order with
\[
a_{11} \prec a_{10} \prec a_9 \prec a_8 \prec a_7 \prec a_6 \prec a_5 \prec a_4 \prec a_3 \prec a_2 \prec a_1,
\]
	whereas for that in $\mathbb{F}_{49} [ x, y, z]$, the grevlex order with $z \prec y \prec x$ is adopted.
	Put $\mathcal{A}_2 := \{0, 1, \zeta^{(49)} \} \oplus \mathbb{F}_{49}$.
	\item[{\rm (3)}] Substitute $\frac{X+Y}{2}$ and $\frac{X-Y}{-2\sqrt{\epsilon}}$ into $x$ and $y$ in $h$.
	Let $h^{\prime}$ denote the polynomial transformed from $h$ by the above substitution.
	\item[{\rm (4)}] Let $H^{\prime}$ be the Hasse-Witt matrix with respect to \eqref{BasisForH'}, i.e., the $(i,j)$-entry of $H^{\prime}$ ($1 \leq i,j \leq 5$) is the coefficient of the monomial $B_i B_j^{-p}$ in $h^{\prime}$.
	\item[{\rm (5)}] Compute $H:=P^{(7)} H^{\prime} P^{-1}$.
	Let $\mathcal{S}$ be the set of the $25$ entries of $H$.
	Note that $\mathcal{S} \subset \mathbb{F}_{49} [a_1, \ldots, a_{11}]$.
	For each $( c_1, c_2 ) \in \mathcal{A}_2$, conduct procedures similar to the proof of Proposition \ref{prop:Case1q7}.
	\end{enumerate}
From the outputs, we have that there does not exist any quintic form $F$ of the form \eqref{eq:quintic4} such that the desingularization of $V (F) \subset \mathbf{P}^2$ is a superspecial trigonal curve of genus $5$. 
\end{proof}

\subsubsection{Cusp case with $q = p^2 = 49$}

\begin{prop}\label{prop:Case5q7}
Consider the quintic form
\begin{equation}
\begin{split}
F  = &  x^2 z^3 + a_1 y^3 z^2 + (a_2 x^4 + a_3 x^3 y + a_4 x^2 y^2 + b_1 x y^3 + a_5 y^4) z \\
& + a_6 x^5 + a_7 x^4 y + a_8 x^3 y^2 + a_9 x^2 y^3 + b_2 x y^4 + a_{10} y^5,
\end{split}\label{eq:quintic5}
\end{equation}
where $a_i \in \mathbb{F}_{49}$ for $1 \leq i \leq 10$ with $a_1 \neq 0$ and $(b_1, b_2) \in \{ (0,0), (1,0), (0,1), (1,1) \}$.
Then there does not exist any quintic form $F$ of the form \eqref{eq:quintic5} such that the desingularization of $V (F) \subset \mathbf{P}^2$ is a superspecial trigonal curve of genus $5$. 
\end{prop}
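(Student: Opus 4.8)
This is the cusp-type case of the same computer-assisted enumeration performed in the split-node and non-split-node settings in Propositions \ref{prop:Case1q7}--\ref{prop:Case4q7}. Since for a cusp model $C'=V(F)$ the space $H^1(C,\mathcal{O}_C)$ is the quotient of $H^2(\bbP^2,\mathcal{O}_{\bbP^2}(-5))$ by the line spanned by $1/(xyz^3)$, the superspeciality test is precisely Corollary \ref{Criterion_ssp_non-split}, so the plan is to feed the family \eqref{eq:quintic5} into Enumeration Algorithm~1.

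Concretely, writing $F$ in the shape \eqref{eq:quintic}, I would take $t=10$ with
\[
\{p_1,\dots,p_{10}\}=\{y^3z^2,\ x^4z,\ x^3yz,\ x^2y^2z,\ y^4z,\ x^5,\ x^4y,\ x^3y^2,\ x^2y^3,\ y^5\},
\]
so that $a_1,\dots,a_{10}$ are the coefficients in \eqref{eq:quintic5}, and $u=3$ with $\{q_1,q_2,q_3\}=\{x^2z^3,\ xy^3z,\ xy^4\}$, the coefficient of $q_1$ fixed to $1$ and $(b_1,b_2)$ running over the four pairs in the statement (exactly as the parameters $b_1$ and $c_i$ are handled in Propositions \ref{prop:Case1q7}--\ref{prop:Case4q7}). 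For each $(b_1,b_2)$ I would run Enumeration Algorithm~1: with $\mathcal{A}_1=\emptyset$ and $s_1=10$, compute $h:=F^{p-1}=F^6$ over $\mathbb{F}_{49}[a_1,\dots,a_{10}][x,y,z]$; in step~(2) keep most of the $a_i$ symbolic and let $\mathcal{A}_2$ sweep two of them, with $a_1$ ranging over $\mathbb{F}_{49}^\times$ so as to impose the constraint $a_1\neq 0$ coming from the cusp normalization (Lemma~\ref{CharacterizationTrigonal}(1)(ii)); extract from $h$ the $25$ coefficients of the monomials $x^{pi-i'}y^{pj-j'}z^{pk-k'}$ of Corollary \ref{Criterion_ssp_non-split}, adjoin the linear relations $a_j-c_j$ pinning the swept variables, and solve the resulting system over $\mathbb{F}_{49}$ by Gr\"{o}bner bases, using grevlex with $a_{10}\prec\cdots\prec a_1$ on the coefficient ring and grevlex with $z\prec y\prec x$ on $\mathbb{F}_{49}[x,y,z]$; in step~(c) discard any solution whose $V(F)$ does not have a single singular point. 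The claim amounts to the output $\mathcal{F}$ being empty for all four $(b_1,b_2)$; together with Proposition \ref{ReductionCusp} and Lemma \ref{CharacterizationTrigonal}, this gives the asserted non-existence.

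The obstacle is entirely computational, not structural. Even though $p-1=6$ is small, $F^6$ is dense in a ten-variable coefficient ring, so the $25$ polynomials whose common zero set must be computed are sizeable, and the Gr\"{o}bner basis step is the bottleneck; as in the genus-$4$ enumerations \cite{KH17}, \cite{KH17a}, its feasibility hinges on a good choice of which $a_i$ to brute-force and of the term orders, and it is the hybrid method \cite{BFP} that renders the run practical. The only other point needing care is that the restriction $a_1\neq 0$ and the single-singularity check in step~(c) are enforced faithfully, so that no genuine cusp-type superspecial curve of genus $5$ over $\mathbb{F}_{49}$ is overlooked and no inadmissible model is retained; correctness of the final statement then follows from Corollary \ref{Criterion_ssp_non-split}.
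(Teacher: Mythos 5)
Your proposal matches the paper's proof: the same decomposition into $\{p_i\}$ and $\{q_j\}$, the same use of Corollary \ref{Criterion_ssp_non-split} (justified by the cusp-case computation of the image of $H^0(\delta)$ as the line spanned by $1/(xyz^3)$), and the same run of Enumeration Algorithm~1 over the four choices of $(b_1,b_2)$ with a Gr\"{o}bner-basis solve and a final single-singularity check. The only deviation is a trivial tuning detail — the paper brute-forces only $a_1$ over $\mathbb{F}_{49}^\times$ (i.e.\ $s_2=9$) rather than two coefficients — which does not affect correctness.
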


\begin{proof}
Put $t = 10$, $u = 3$ and
\begin{eqnarray}
\{ p_1, \ldots , p_t \} & = & \{ y^3 z^2, x^4 z, x^3 y z, x^2 y^2 z, y^4 z,  x^5, x^4 y, x^3 y^2, x^2 y^3, y^5 \}, \nonumber \\
\{ q_1, \ldots , q_u \} & = & \{ x y^3 z, x y^4,  x^2 z^3 \}. \nonumber 
\end{eqnarray}
For each $(b_1, b_2) \in \{ (0,0), (1,0), (0,1), (1,1) \}$, execute Enumeration Algorithm 1 given in Subsection \ref{subsec:algorithm}.
We here give an outline of our computation together with our choices of $s_1$, $s_2$, $\{ k_1, \ldots , k_{s_1} \}$, $\{ i_1, \ldots , i_{s_2} \}$, $\mathcal{A}_1$, $\mathcal{A}_2$ and a term ordering in the algorithm.
\begin{enumerate}
\item[{\rm (0)}]
We set $s_1:= 10$, and $( k_1, \ldots , k_{s_1} ):= ( 1, \ldots , 10)$ (we regard the $10$ coefficients $a_1$, $a_2$, $a_3$, $a_4$, $a_5$, $a_6$, $a_7$, $a_8$, $a_9$ and $a_{10}$ as indeterminates).
Let $\mathcal{A}_1:=\emptyset$.
\end{enumerate}
We proceed with the following three steps:
\begin{enumerate}
	\item[{\rm (1)}] Compute $F:= \sum_{i=1}^t a_i p_i + \sum_{j=1}^u b_j q_j$ and $h:= ( F )^{p-1}$ over $\mathbb{F}_{49} [a_1, \ldots, a_{10}] [x, y, z]$, where $a_1, \ldots, a_{10}$ are indeterminates and $b_3=1$. 
	\item[{\rm (2)}] We set $s_2 := 9$, and $(i_1, \ldots , i_{s_2}) := ( 2, 3, 4, 5, 6, 7, 8, 9, 10)$ (we regard the $9$ coefficients $a_2$, $a_3$, $a_4$, $a_5$, $a_6$, $a_7$, $a_8$, $a_9$ and $a_{10}$ as indeterminates).
	For the Gr\"{o}bner basis computation in $\mathbb{F}_{49} [ a_1, a_2, a_3, a_4, a_5, a_6, a_7, a_8, a_9, a_{10}]$ in (3), we adopt the graded reverse lexicographic (grevlex) order with
\[
a_{10} \prec a_9 \prec a_8 \prec a_7 \prec a_6 \prec a_5 \prec a_4 \prec a_3 \prec a_2 \prec a_1,
\]
	whereas for that in $\mathbb{F}_{49} [ x, y, z]$, the grevlex order with $z \prec y \prec x$ is adopted.
	Put $\mathcal{A}_2 := \mathbb{F}_{49}^{\times}$.
	\item[{\rm (3)}] Let $\mathcal{S} \subset \mathbb{F}_{49} [a_1, \ldots, a_{10}]$ be the set of the coefficients of the $25$ monomials in $h$, given in Corollary \ref{Criterion_ssp_non-split}.
	For each $c_1 \in \mathcal{A}_2$, conduct procedures similar to the proof of Proposition \ref{prop:Case1q7}.
\end{enumerate}
From the outputs, we have that there does not exist any quintic form $F$ of the form \eqref{eq:quintic5} such that the desingularization of $V (F) \subset \mathbf{P}^2$ is a superspecial trigonal curve of genus $5$.
\end{proof}

\subsubsection{Split node case (1) with $q = p = 11$}

\begin{prop}\label{prop:Case1q11}
Consider the quintic form
\begin{equation}
\begin{split}
F  = &  x y z^3 + (x^3 + b_1 y^3) z^2 + (a_1 x^4 + a_2 x^3 y + a_3 x^2 y^2 + a_4 x y^3 + a_5 y^4) z \\
& + a_6 x^5 + a_7 x^4 y + a_8 x^3 y^2 + a_9 x^2 y^3 + a_{10} x y^4 + a_{11} y^5,
\end{split}\label{eq:quintic1q11}
\end{equation}
where $a_i \in \mathbb{F}_{11}$ for $1 \leq i \leq 11$ and $b_1 \in \{ 0, 1 \}$.
Then there does not exist any quintic form $F$ of the form \eqref{eq:quintic1q11} such that the desingularization of $V (F) \subset \mathbf{P}^2$ is a superspecial trigonal curve of genus $5$.
\end{prop}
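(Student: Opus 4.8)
The plan is to repeat over $\mathbb{F}_{11}$ the computational argument already used for Proposition \ref{prop:Case1q7}: run Enumeration Algorithm 1 of Subsection \ref{subsec:algorithm} for each admissible value of $b_1$ and verify that the output list is empty. Concretely, I would put $t=11$, $u=3$,
\[
\{p_1,\ldots,p_t\}=\{x^4z,\,x^3yz,\,x^2y^2z,\,xy^3z,\,y^4z,\,x^5,\,x^4y,\,x^3y^2,\,x^2y^3,\,xy^4,\,y^5\}
\]
and $\{q_1,q_2,q_3\}=\{y^3z^2,\,xyz^3,\,x^3z^2\}$, so that the quintic in \eqref{eq:quintic1q11} is $F=\sum_{i=1}^{11}a_ip_i+b_1q_1+q_2+q_3$ with $a_i\in\mathbb{F}_{11}$ and $b_1\in\{0,1\}$. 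By Corollary \ref{Criterion_ssp_non-split} the desingularization of $V(F)$ is superspecial precisely when the $25$ designated coefficients of $F^{p-1}=F^{10}$ vanish, and by Lemma \ref{CharacterizationTrigonal} the unique singular point (a node) forces geometric genus $5$, so it suffices to enumerate the $F$ satisfying these coefficient equations and then discard the reducible ones and those whose zero locus has more than one singular point.

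For each of the two values of $b_1$ I would follow steps (0)--(3) of the algorithm exactly as in Proposition \ref{prop:Case1q7}: in step (0) take $s_1=11$, regarding all of $a_1,\ldots,a_{11}$ as indeterminates (so $\mathcal{A}_1:=\emptyset$, i.e.\ there is no outer brute-force loop); in step (1) compute $h=F^{10}\in\mathbb{F}_{11}[a_1,\ldots,a_{11}][x,y,z]$; in step (2) take $s_2=9$, choosing $a_1$ and $a_3$ as the two coefficients to be enumerated over $\mathcal{A}_2=(\mathbb{F}_{11})^{\oplus 2}$, and fixing the grevlex orders $a_{11}\prec\cdots\prec a_1$ on the coefficient ring and $z\prec y\prec x$ on $x,y,z$; and in step (3) let $\mathcal{S}$ be the set of the $25$ coefficients of $h$ from Corollary \ref{Criterion_ssp_non-split}, and for each $(c_1,c_3)\in\mathcal{A}_2$ solve the specialized zero-dimensional system $\mathcal{S}'$ by a Gr\"obner basis computation, substitute each solution back into $F$, and decide via a Jacobian-ideal (or primary decomposition) computation whether $V(F)$ has exactly one singular point. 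Finding the output list empty for both $b_1=0$ and $b_1=1$ then proves the proposition, exactly as in Propositions \ref{prop:Case1q7} and \ref{prop:Case2q7}.

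The principal obstacle is purely computational: the $2\times 121$ Gr\"obner basis computations in $\mathbb{F}_{11}[a_1,\ldots,a_{11}]$ arise from equations that are quadratic in the $a_i$, and their feasibility depends delicately on the hybrid-method choices in step (2) -- which two coefficients to brute-force, and in what term order -- so one may need to experiment with these choices, as was done for the genus $4$ cases in \cite{KH17} and \cite{KH17a}, until each subsystem is solved in practical time. A secondary point requiring care is that Enumeration Algorithm 1 only certifies superspeciality of the \emph{normalization of the quintic} $V(F)$, not that this normalization is a genus-$5$ trigonal curve; hence any surviving $F$ that is reducible over $\overline{\mathbb{F}_{11}}$, or whose zero locus has a second singular point (which would drop the geometric genus below $5$), must be discarded in step (3)(c), and one should confirm that after this filtering nothing remains.
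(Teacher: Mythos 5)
Your proposal is correct and follows essentially the same route as the paper: the paper's proof of this proposition is exactly an application of Enumeration Algorithm 1 with the same $p_i$, $q_j$, the $25$ coefficient conditions from Corollary \ref{Criterion_ssp_non-split}, a Gr\"obner-basis solve, and the final one-singular-point check. The only differences are implementation-level tuning that you already flag as adjustable — the paper splits into the cases $b_1=1$ and $b_1=0$ (to avoid memory exhaustion) and brute-forces three resp.\ four coefficients ($s_2=8$ resp.\ $s_2=7$) rather than your two, reflecting that the coefficient equations coming from $F^{10}$ have degree up to $10$ in the $a_i$ (not quadratic, as you state in passing), which makes the $q=11$ systems noticeably harder than the $q=49$ ones.
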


\begin{proof}
Put $t = 11$, $u = 3$ and
\begin{eqnarray}
\{ p_1, \ldots , p_t \} & = & \{ x^4 z, x^3 y z, x^2 y^2 z, x y^3 z, y^4 z,  x^5, x^4 y, x^3 y^2, x^2 y^3, x y^4, y^5 \}, \nonumber \\
\{ q_1, \ldots , q_u \} & = & \{ y^3 z^2, x^3 z^2, x y z^3 \}. \nonumber 
\end{eqnarray}
We divide our computation into the following two cases (this is our technical strategy to avoid the out of memory errors). 
\begin{description}
\item[{\bf (i) Case of $b_1 \neq 0$} (i.e., $b_1 = 1$).]
Execute Enumeration Algorithm 1 given in Subsection \ref{subsec:algorithm}.
We here give an outline of our computation together with our choices of $s_1$, $s_2$, $\{ k_1, \ldots , k_{s_1} \}$, $\{ i_1, \ldots , i_{s_2} \}$, $\mathcal{A}_1$, $\mathcal{A}_2$ and a term ordering in the algorithm.
\begin{enumerate}
\item[{\rm (0)}]
We set $s_1:= 11$, and $( k_1, \ldots , k_{s_1} ):= ( 1, \ldots , 11)$ (we regard the $11$ coefficients $a_1$, $a_2$, $a_3$, $a_4$, $a_5$, $a_6$, $a_7$, $a_8$, $a_9$, $a_{10}$ and $a_{11}$ as indeterminates).
Let $\mathcal{A}_1:=\emptyset$.
\end{enumerate}
We proceed with the following three steps:
\begin{enumerate}
	\item[{\rm (1)}] Compute $F:= \sum_{i=1}^t a_i p_i + \sum_{j=1}^u b_j q_j$ and $h:= ( F )^{p-1}$ over $\mathbb{F}_{11} [a_1, \ldots, a_{11}] [x, y, z]$, where $a_1, \ldots, a_{11}$ are indeterminates and $(b_2,b_3)=(1,1)$. 
	\item[{\rm (2)}] We set $s_2 := 8$, and $(i_1, \ldots , i_{s_2}) := ( 1, 2, 3, 6, 7, 8, 9, 10)$ (we regard the $8$ coefficients $a_1$, $a_2$, $a_3$, $a_6$, $a_7$, $a_8$, $a_9$ and $a_{10}$ as indeterminates).
	For the Gr\"{o}bner basis computation in $\mathbb{F}_{11} [ a_1, a_2, a_3, a_4, a_5, a_6, a_7, a_8, a_9, a_{10}, a_{11}]$ in (3), we adopt the graded reverse lexicographic (grevlex) order with
\[
a_{11} \prec a_{10} \prec a_9 \prec a_8 \prec a_7 \prec a_6 \prec a_5 \prec a_4 \prec a_3 \prec a_2 \prec a_1,
\]
	whereas for that in $\mathbb{F}_{11} [ x, y, z]$, the grevlex order with $z \prec y \prec x$ is adopted.
	Put $\mathcal{A}_2 := ( \mathbb{F}_{11} )^{\oplus 3}$.
	\item[{\rm (3)}] Let $\mathcal{S} \subset \mathbb{F}_{11} [a_1, \ldots, a_{11}]$ be the set of the coefficients of the $25$ monomials in $h$, given in Corollary \ref{Criterion_ssp_non-split}.
	For each $( c_4, c_5, c_{11} ) \in \mathcal{A}_2$, conduct procedures similar to the proof of Proposition \ref{prop:Case1q7}.
\end{enumerate}

\item[{\bf (ii) Case of $b_1 = 0$}.]
Execute Enumeration Algorithm 1 given in Subsection \ref{subsec:algorithm}.
We here give an outline of our computation together with our choices of $s_1$, $s_2$, $\{ k_1, \ldots , k_{s_1} \}$, $\{ i_1, \ldots , i_{s_2} \}$, $\mathcal{A}_1$, $\mathcal{A}_2$ and a term ordering in the algorithm.
\begin{enumerate}
\item[{\rm (0)}]
We set $s_1:= 11$, and $( k_1, \ldots , k_{s_1} ):= ( 1, \ldots , 11)$ (we regard the $11$ coefficients $a_1$, $a_2$, $a_3$, $a_4$, $a_5$, $a_6$, $a_7$, $a_8$, $a_9$, $a_{10}$ and $a_{11}$ as indeterminates).
Let $\mathcal{A}_1:=\emptyset$.
\end{enumerate}
We proceed with the following three steps:
\begin{enumerate}
	\item[{\rm (1)}] Compute $F:= \sum_{i=1}^t a_i p_i + \sum_{j=1}^u b_j q_j$ and $h:= ( F )^{p-1}$ over $\mathbb{F}_{11} [a_1, \ldots, a_{11}] [x, y, z]$, where $a_1, \ldots, a_{11}$ are indeterminates and $(b_2,b_3)=(1,1)$. 
	\item[{\rm (2)}] We set $s_2 := 7$, and $(i_1, \ldots , i_{s_2}) := ( 5, 6, 7, 8, 9, 10, 11)$ (we regard the $7$ coefficients $a_5$, $a_6$, $a_7$, $a_8$, $a_9$, $a_{10}$ and $a_{11}$ as indeterminates).
	For the Gr\"{o}bner basis computation in $\mathbb{F}_{11} [ a_1, a_2, a_3, a_4, a_5, a_6, a_7, a_8, a_9, a_{10}, a_{11}]$ in (3), we adopt the graded reverse lexicographic (grevlex) order with
\[
a_{11} \prec a_{10} \prec a_9 \prec a_8 \prec a_7 \prec a_6 \prec a_5 \prec a_4 \prec a_3 \prec a_2 \prec a_1,
\]
	whereas for that in $\mathbb{F}_{11} [ x, y, z]$, the grevlex order with $z \prec y \prec x$ is adopted.
	Put $\mathcal{A}_2 := ( \mathbb{F}_{11} )^{\oplus 4}$.
	\item[{\rm (3)}] Let $\mathcal{S} \subset \mathbb{F}_{11} [a_1, \ldots, a_{11}]$ be the set of the coefficients of the $25$ monomials in $h$, given in Corollary \ref{Criterion_ssp_non-split}.
	For each $( c_1, c_2, c_3, c_4 ) \in \mathcal{A}_2$, conduct procedures similar to the proof of Proposition \ref{prop:Case1q7}.
\end{enumerate}
\end{description}
From the outputs, we have that there does not exist any quintic form $F$ of the form \eqref{eq:quintic1q11} such that the desingularization of $V (F) \subset \mathbf{P}^2$ is a superspecial trigonal curve of genus $5$. 
\end{proof}

\subsubsection{Split node case (2) with $q = p = 11$}

\begin{prop}\label{prop:Case2q11}
Consider the quintic form
\begin{equation}
\begin{split}
F  = &  x y z^3 + (c_1 x^4 + c_2 x^3 y + a_3 x^2 y^2 + a_4 x y^3 + a_5 y^4) z \\
& + a_6 x^5 + a_7 x^4 y + a_8 x^3 y^2 + a_9 x^2 y^3 + a_{10} x y^4 + a_{11} y^5,
\end{split}\label{eq:quintic2q11}
\end{equation}
where $a_i \in \mathbb{F}_{11}$ for $3 \leq i \leq 11$ and $(c_1, c_2) \in \{ (0,0), (1,0), (0,1), (1,1), (1,\zeta^{(11)}) \}$.
Then the desingularization of $V (F) \subset \mathbf{P}^2$ is superspecial if and only if $a_6, a_{11} \in \F_{11}^\times$, $a_i = 0$ for $i =3, \ldots , 5, 7, \ldots 10$ and $c_1 = c_2 = 0$.
\end{prop}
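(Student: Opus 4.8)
The plan is to use Corollary~\ref{Criterion_ssp_non-split}, which says that the desingularization of $V(F)$ is superspecial if and only if the $25$ coefficients of the monomials $x^{11i-i'}y^{11j-j'}z^{11k-k'}$ in $F^{10}$ all vanish, where $(i,j,k)$ and $(i',j',k')$ range over $(3,1,1),(1,3,1),(2,2,1),(2,1,2),(1,2,2)$; this turns superspeciality into a system of $25$ polynomial equations in $c_1,c_2,a_3,\dots,a_{11}$, and I would prove the two implications separately.

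For the ``if'' direction I expect a short by-hand argument to suffice. Substituting $c_1=c_2=0$ and $a_3=a_4=a_5=a_7=a_8=a_9=a_{10}=0$ gives $F=xyz^3+a_6x^5+a_{11}y^5$, and by the multinomial theorem every monomial of $F^{10}$ is a scalar multiple of $x^{\alpha+5\beta}y^{\alpha+5\gamma}z^{3\alpha}$ with $\alpha+\beta+\gamma=10$. The $z$-exponents $11k-k'$ occurring in the $25$ target monomials form $\{9,10,20,21\}$, so $3\alpha\in\{9,10,20,21\}$ forces $\alpha\in\{3,7\}$; but then the $x$- and $y$-exponents are $\equiv\alpha\pmod 5$, i.e.\ $\equiv 3$ (for $\alpha=3$, which pairs with $z$-exponent $9$) or $\equiv 2$ (for $\alpha=7$, $z$-exponent $21$), whereas a direct inspection of the six relevant monomials in each case shows none of the required $x$- or $y$-exponents $11i-i'$, $11j-j'$ is $\equiv 3$, resp.\ $\equiv 2$, modulo $5$. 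Hence no target monomial occurs in $F^{10}$, so the Hasse--Witt matrix vanishes. I would then check that for $a_6,a_{11}\in\F_{11}^\times$ the three partials of $F$ vanish simultaneously only at $(0:0:1)$, which is a node (affine tangent cone $xy$); thus $F$ is of the form in Lemma~\ref{CharacterizationTrigonal}~(1)(i) with a single singular point, so by Lemma~\ref{CharacterizationTrigonal}~(2) the desingularization is a trigonal curve of genus $5$, and a genus-$5$ curve with zero Hasse--Witt matrix is superspecial.

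For the ``only if'' direction I would run Enumeration Algorithm~1 of Subsection~\ref{subsec:algorithm} over the whole parameter space: brute-force over the five admissible pairs $(c_1,c_2)$ and over a well-chosen subset of $a_3,\dots,a_{11}$ (splitting into subcases if memory requires, as in the proof of Proposition~\ref{prop:Case1q11}), treat the remaining coefficients as indeterminates, and for each resulting instance solve the $25$-equation system over $\F_{11}$ by a Gr\"obner basis computation with the grevlex orders used in the proof of Proposition~\ref{prop:Case1q7}. For each solution I would substitute back into $F$ and use step~(c) to test whether $V(F)$ has exactly one singular point, discarding reducible quintics (in particular those with $a_6=0$ or $a_{11}=0$) and those with more than one singularity; the expectation is that the surviving quintics are precisely those with $c_1=c_2=0$, $a_3=a_4=a_5=a_7=a_8=a_9=a_{10}=0$ and $a_6,a_{11}\in\F_{11}^\times$, giving the claim.

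The hard part will be the ``only if'' direction: the $25$-equation system in up to eleven unknowns is far too large for a direct Gr\"obner basis computation, so the split between brute-forced and symbolic coefficients has to be chosen with care---both to make each Gr\"obner computation terminate and to stay within memory---while ensuring that the brute force together with the $(c_1,c_2)$-loop really exhausts every $F$ of the form \eqref{eq:quintic2q11}; the post-processing filter on the number of singular points of $V(F)$ is what discards the reducible or multiply-singular solutions of the system.
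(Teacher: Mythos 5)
Your proposal is correct and follows essentially the same route as the paper: the paper's proof of this proposition is exactly the computational one you describe for the ``only if'' direction (Enumeration Algorithm 1 with $(c_1,c_2)$ and two of the $a_i$ brute-forced, the remaining seven coefficients symbolic, Gr\"obner bases for the $25$-equation system from Corollary \ref{Criterion_ssp_non-split}, and a final singularity/irreducibility filter via Appendix \ref{sec:irr}). Your by-hand multinomial verification of the ``if'' direction (checking that $3\alpha\in\{9,10,20,21\}$ forces $\alpha\in\{3,7\}$ and that the residues mod $5$ of the target $x$-exponents then rule out every monomial) is a correct and welcome independent check that the paper itself omits, relying instead on the computation for both directions; just note that invoking Lemma \ref{CharacterizationTrigonal}~(2) also requires the irreducibility of $xyz^3+a_6x^5+a_{11}y^5$, which follows quickly from your singular-locus computation together with B\'ezout.
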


\begin{proof}
Put $t = 11$, $u = 1$ and
\begin{eqnarray}
\{ p_1, \ldots , p_t \} & = & \{  x^4 z, x^3 y z, x^2 y^2 z, x y^3 z, y^4 z,  x^5, x^4 y, x^3 y^2, x^2 y^3, x y^4, y^5 \}, \nonumber \\
\{ q_1, \ldots , q_u \} & = & \{ x y z^3 \}. \nonumber 
\end{eqnarray}
Execute Enumeration Algorithm 1 given in Subsection \ref{subsec:algorithm}.
We here give an outline of our computation together with our choices of $s_1$, $s_2$, $\{ k_1, \ldots , k_{s_1} \}$, $\{ i_1, \ldots , i_{s_2} \}$, $\mathcal{A}_1$, $\mathcal{A}_2$ and a term ordering in the algorithm.
\begin{enumerate}
\item[{\rm (0)}]
We set $s_1:= 11$, and $( k_1, \ldots , k_{s_1} ):= ( 1, \ldots , 11)$ (we regard the $11$ coefficients $a_1$, $a_2$, $a_3$, $a_4$, $a_5$, $a_6$, $a_7$, $a_8$, $a_9$, $a_{10}$ and $a_{11}$ as indeterminates).
Let $\mathcal{A}_1:=\emptyset$.
\end{enumerate}
We proceed with the following three steps:
\begin{enumerate}
	\item[{\rm (1)}] Compute $F:= \sum_{i=1}^t a_i p_i + \sum_{j=1}^u b_j q_j$ and $h:= ( F )^{p-1}$ over $\mathbb{F}_{11} [a_1, \ldots, a_{11}] [x, y, z]$, where $a_1, \ldots, a_{11}$ are indeterminates and $b_1=1$. 
	\item[{\rm (2)}] We set $s_2 := 7$, and $(i_1, \ldots , i_{s_2}) := ( 5, 6, 7, 8, 9, 10, 11)$ (we regard the $7$ coefficients $a_5$, $a_6$, $a_7$, $a_8$, $a_9$, $a_{10}$ and $a_{11}$ as indeterminates).
	For the Gr\"{o}bner basis computation in the polynomial ring $\mathbb{F}_{11} [ a_1, a_2, a_3, a_4, a_5, a_6, a_7, a_8, a_9, a_{10}, a_{11}]$ in (3), we adopt the graded reverse lexicographic (grevlex) order with
\[
a_{11} \prec a_{10} \prec a_9 \prec a_8 \prec a_7 \prec a_6 \prec a_5 \prec a_4 \prec a_3 \prec a_2 \prec a_1.
\]
	Put $\mathcal{A}_2 := \{ (0,0), (1,0), (0,1), (1,1), (1,\zeta^{(11)}) \} \oplus (\mathbb{F}_{11})^{\oplus 2}$.
	\item[{\rm (3)}] Let $\mathcal{S} \subset \mathbb{F}_{11} [a_1, \ldots, a_{11}]$ be the set of the coefficients of the $25$ monomials in $h$, given in Corollary \ref{Criterion_ssp_non-split}.
	For each $( c_1, c_2, c_3, c_4 ) \in \mathcal{A}_2$, conduct procedures similar to the proof of Proposition \ref{prop:Case1q7}.
\end{enumerate}
Each quintic form obtained as an element of the output is irreducible over $\overline{\mathbb{F}_{11}}$, which we check by the method for the irreducibility test given in Appendix \ref{sec:irr}.
From the outputs, we have that the desingularization of $V (F) \subset \mathbf{P}^2$ is superspecial if and only if $a_6, a_{11} \in \F_{11}^\times$, $a_i = 0$ for $i =3, \ldots , 5, 7, \ldots 10$ and $c_1 = c_2 = 0$.
\end{proof}

\subsubsection{Non-split node case (1) with $q = p= 11$}

\begin{prop}\label{prop:Case3q11}
Consider the quintic form
\begin{equation}
\begin{split}
F  = &  (x^2 - \epsilon y^2 ) z^3 + x (x^2 + 3 \epsilon y^2 ) z^2 + b y (3 x^2 + \epsilon y^2) ) z^2 \\
& + (a_1 x^4 + a_2 x^3 y + a_3 x^2 y^2 + a_4 x y^3 + a_5 y^4) z \\
& + a_6 x^5 + a_7 x^4 y + a_8 x^3 y^2 + a_9 x^2 y^3 + a_{10} x y^4 + a_{11} y^5,
\end{split}\label{eq:quintic3q11}
\end{equation}
where $a_i \in \mathbb{F}_{11}$ for $1 \leq i \leq 11$, $b \in \{ 0, 6, 10 \}$ and $\epsilon \in \mathbb{F}_{11}^{\times} \smallsetminus (\mathbb{F}_{11}^{\times})^2$.
Then there does not exist any quintic form $F$ of the form \eqref{eq:quintic3q11} such that the desingularization of $V (F) \subset \mathbf{P}^2$ is superspecial. 
\end{prop}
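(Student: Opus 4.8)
The plan is to follow the strategy of the proof of Proposition \ref{prop:Case3q7} (the analogous non-split node case over $\mathbb{F}_{49}$), adapted to $q = p = 11$ and with the extra loop over $b \in \{0, 6, 10\}$. First I would set $t = 11$, $u = 3$, and
\begin{eqnarray*}
\{ p_1, \ldots , p_t \} & = & \{ x^4 z, x^3 y z, x^2 y^2 z, x y^3 z, y^4 z,  x^5, x^4 y, x^3 y^2, x^2 y^3, x y^4, y^5 \}, \\
\{ q_1, \ldots , q_u \} & = & \{ (x^2 - \epsilon y^2 ) z^3,  x (x^2 + 3 \epsilon y^2 ) z^2, y (3 x^2 + \epsilon y^2 ) z^2 \},
\end{eqnarray*}
so that $F = \sum_{i=1}^{11} a_i p_i + \sum_{j=1}^{3} b_j q_j$ with $(b_1, b_2, b_3) = (1, 1, b)$. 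Since we are in the non-split node case, the superspeciality test is the vanishing of the Hasse-Witt matrix $H = P^{(p)} H' P^{-1}$, where $H'$ is read off from the basis \eqref{BasisForH'}; accordingly I would invoke Enumeration Algorithm 2 of Subsection \ref{subsec:algorithm}, first constructing $K' = \mathbb{F}_{11}[T]/\langle T^2 - \epsilon\rangle$ (with $T = \sqrt{\epsilon}$) together with the matrices $P$, $P^{-1}$, $P^{(11)}$, and then running the algorithm once for each $b \in \{0, 6, 10\}$.

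For the hybrid-method parameters I would, as in Proposition \ref{prop:Case3q7}, regard all eleven coefficients $a_1, \ldots, a_{11}$ as indeterminates at stage (0) (so $s_1 = 11$, $\mathcal{A}_1 = \emptyset$), compute $h = F^{p-1}$ over $\mathbb{F}_{11}[a_1, \ldots, a_{11}][x,y,z]$, substitute $x \mapsto (X+Y)/2$ and $y \mapsto (X-Y)/(-2\sqrt{\epsilon})$ to obtain $h'$, extract $H'$ from the coefficients of the monomials $B_i B_j^{-p}$, and form the $25$ entries of $H = P^{(11)} H' P^{-1}$. Then I would brute-force a handful of the $a_i$ — tentatively $s_2 = 9$ remaining indeterminates and $\mathcal{A}_2 = (\mathbb{F}_{11})^{\oplus 2}$, specialising $a_2, a_4$ as in the $q = 49$ case — and for each specialisation solve the resulting polynomial system $\{g = 0 : g \in \mathcal{S}'\}$ over $\mathbb{F}_{11}$ by a Gr\"obner basis computation (grevlex with $a_{11} \prec \cdots \prec a_1$ on the coefficient side and $z \prec y \prec x$ on the monomial side), as in \cite[Subsection 2.3]{KHS17} within the hybrid-method framework of \cite{BFP}. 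For each root one substitutes back into $F$ and checks whether $V(F)$ has a unique singular point.

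The expected conclusion is that for all three values of $b$ the output list $\mathcal{F}$ is empty, which by the correctness statement for Enumeration Algorithm 2 shows that no $F$ of the form \eqref{eq:quintic3q11} has superspecial desingularization. The hard part will be purely computational: the eleven-variable Gr\"obner basis computations over the small field $\mathbb{F}_{11}$ can blow up in memory, and if the direct run does not terminate I would split into subcases — for instance separating the three values of $b$ further, or casing on whether one of $a_1, \ldots, a_5$ vanishes — exactly as was necessary in the two-case split of Proposition \ref{prop:Case1q11}. As a built-in sanity check I would verify that the answer is stable under the conjugation of $E/K$ and, up to isomorphism, independent of the chosen nonsquare $\epsilon$, consistent with the base-change compatibility used in realizing $H^1(C,\mathcal{O}_C)$ via \eqref{CCM}.
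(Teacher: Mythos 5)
Your proposal follows essentially the same route as the paper: for each $b\in\{0,6,10\}$ run Enumeration Algorithm 2 with $s_1=11$, $\mathcal{A}_1=\emptyset$, form $H=P^{(11)}H'P^{-1}$ from the basis \eqref{BasisForH'}, solve the resulting systems by the Gr\"obner-basis hybrid method, and conclude from the empty output list. The only divergence is in the tuning of the brute-force split --- the paper takes $s_2=6$ (keeping only $a_6,\dots,a_{11}$ as Gr\"obner indeterminates) and brute-forces $(a_1,\dots,a_5)$ over $\mathcal{A}_2=(\mathbb{F}_{11})^{\oplus 5}$, rather than your tentative $s_2=9$ with $\mathcal{A}_2=(\mathbb{F}_{11})^{\oplus 2}$ carried over from the $q=49$ case --- a practical adjustment of exactly the kind you already anticipate.
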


\begin{proof}
Put $t = 11$, $u = 3$ and
\begin{eqnarray}
\{ p_1, \ldots , p_t \} & = & \{ x^4 z, x^3 y z, x^2 y^2 z, x y^3 z, y^4 z,  x^5, x^4 y, x^3 y^2, x^2 y^3, x y^4, y^5 \}, \nonumber \\
\{ q_1, \ldots , q_u \} & = & \{ y (3 x^2 + \epsilon y^2) ) z^2, (x^2 - \epsilon y^2 ) z^3,  x (x^2 + 3 \epsilon y^2 ) z^2 \}. \nonumber 
\end{eqnarray}
For each $b \in \{ 0, 6, 10 \}$, execute Enumeration Algorithm 2 given in Subsection \ref{subsec:algorithm}.
We here give an outline of our computation together with our choices of $s_1$, $s_2$, $\{ k_1, \ldots , k_{s_1} \}$, $\{ i_1, \ldots , i_{s_2} \}$, $\mathcal{A}_1$, $\mathcal{A}_2$ and a term ordering in the algorithm.

First construct the quadratic extension field $K^{\prime}:=\mathbb{F}_{11} [T] / \langle T^2 - \epsilon \rangle \cong \mathbb{F}_{121}$, where we interpret $T = \sqrt{\epsilon}$.
Compute
\[
P = \begin{pmatrix}
1 & 1 & 0 & 0 & 0\\
\sqrt{\epsilon}^{-1}& -\sqrt{\epsilon}^{-1} & 0 & 0 & 0\\
0 & 0 & 1& 0 &0\\
0 & 0 &  0& 1 & 1 \\
0 & 0 & 0 & \sqrt{\epsilon}^{-1}& -\sqrt{\epsilon}^{-1}\\
\end{pmatrix}
\]
and $P^{-1}$.
Compute $P^{(11)}$, which is the matrix obtained by taking the $11$-th power of each entry of $P$.
\begin{enumerate}
\item[{\rm (0)}]
We set $s_1:= 11$, and $( k_1, \ldots , k_{s_1} ):= ( 1, \ldots , 11)$ (we regard the $11$ coefficients $a_1$, $a_2$, $a_3$, $a_4$, $a_5$, $a_6$, $a_7$, $a_8$, $a_9$, $a_{10}$ and $a_{11}$ as indeterminates).
Let $\mathcal{A}_1:=\emptyset$.
\end{enumerate}
We proceed with the following five steps:
\begin{enumerate}
	\item[{\rm (1)}] Compute $F:= \sum_{i=1}^t a_i p_i + \sum_{j=1}^u b_j q_j$ and $h:= ( F )^{p-1}$ over $\mathbb{F}_{11} [a_1, \ldots, a_{11}] [x, y, z]$, where $a_1, \ldots, a_{11}$ are indeterminates and $(b_1,b_2,b_3)=(b,1,1)$.
	\item[{\rm (2)}] We set $s_2 := 6$, and $(i_1, \ldots , i_{s_2}) := ( 6, 7, 8, 9, 10, 11)$ (we regard the $6$ coefficients $a_6$, $a_7$, $a_8$, $a_9$, $a_{10}$ and $a_{11}$ as indeterminates).
	For the Gr\"{o}bner basis computation in the polynomial ring $\mathbb{F}_{11} [ a_1, a_2, a_3, a_4, a_5, a_6, a_7, a_8, a_9, a_{10}, a_{11}]$ in (5), we adopt the graded reverse lexicographic (grevlex) order with
\[
a_{11} \prec a_{10} \prec a_9 \prec a_8 \prec a_7 \prec a_6 \prec a_{5} \prec a_4 \prec a_3 \prec a_2 \prec a_1,
\]
	whereas for that in $\mathbb{F}_{11} [ x, y, z]$, the grevlex order with $z \prec y \prec x$ is adopted.
	Put $\mathcal{A}_2 := ( \mathbb{F}_{11} )^{\oplus 5}$.
	\item[{\rm (3)}] Substitute $\frac{X+Y}{2}$ and $\frac{X-Y}{-2\sqrt{\epsilon}}$ into $x$ and $y$ in $h$.
	Let $h^{\prime}$ denote the polynomial transformed from $h$ by the above substitution.
	\item[{\rm (4)}] Let $H^{\prime}$ be the Hasse-Witt matrix with respect to \eqref{BasisForH'}, i.e., the $(i,j)$-entry of $H^{\prime}$ ($1 \leq i,j \leq 5$) is the coefficient of the monomial $B_i B_j^{-p}$ in $h^{\prime}$.
	\item[{\rm (5)}] Compute $H:=P^{(11)} H^{\prime} P^{-1}$.
	Let $\mathcal{S}$ be the set of the $25$ entries of $H$.
	Note that $\mathcal{S} \subset \mathbb{F}_{11} [a_1, \ldots, a_{11}]$.
	For each $( c_1, c_2, c_3, c_4, c_5 ) \in \mathcal{A}_2$, conduct procedures similar to the proof of Proposition \ref{prop:Case1q7}.
	\end{enumerate}
From the outputs, we have that there does not exist any quintic form $F \in \mathbb{F}_{11}[x,y,z]$ of the form \eqref{eq:quintic3q11} such that the desingularization of $V (F) \subset \mathbf{P}^2$ is a superspecial trigonal curve of genus $5$.
\end{proof}

\subsubsection{Non-split node case (2) and (3) with $q = p= 11$}

\begin{prop}\label{prop:Case4q11}
Consider the quintic form
\begin{equation}
\begin{split}
F  = &  (x^2 - \epsilon y^2 ) z^3  + (c x^4 + a_2 x^3 y + a_3 x^2 y^2 + a_4 x y^3 + a_5 y^4) z \\
& + a_6 x^5 + a_7 x^4 y + a_8 x^3 y^2 + a_9 x^2 y^3 + a_{10} x y^4 + a_{11} y^5,
\end{split}\label{eq:quintic4q11}
\end{equation}
where $a_i \in \mathbb{F}_{11}$ for $2 \leq i \leq 11$, $c \in \{ 0, 1, \zeta^{(11)} \}$ and $\epsilon \in \mathbb{F}_{11}^{\times} \smallsetminus (\mathbb{F}_{11}^{\times})^2$.
Then the desingularization of $V (F)$ is superspecial if and only if $(a_6, a_{7}) \in (\F_{11})^{\oplus 2} \smallsetminus \{ (0,0) \}$, $a_8 = a_{10} = 9 a_6$, $a_9 = 4 a_7$, $a_{11}= 3 a_7$, $a_i = 0$ for $i =2, 3, 4 , 5$ and $c = 0$.
\end{prop}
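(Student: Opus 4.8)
The plan is to derive the statement from the output of Enumeration Algorithm~2 of Subsection~\ref{subsec:algorithm}, run on the family \eqref{eq:quintic4q11}, in exactly the way Proposition~\ref{prop:Case3q11} is proved. First I would write $F = \sum_{i=1}^{t} a_i p_i + \sum_{j=1}^{u} b_j q_j$ with $t = 11$, $u = 1$, the $p_i$ being the monomials $x^4 z$, $x^3 y z$, $x^2 y^2 z$, $x y^3 z$, $y^4 z$, $x^5$, $x^4 y$, $x^3 y^2$, $x^2 y^3$, $x y^4$, $y^5$ and $q_1 = (x^2 - \epsilon y^2) z^3$, so that $b_1 = 1$ and $a_1$ (the coefficient of $x^4 z$) is the symbol standing for $c$. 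Since we are in the non-split case, I first build the quadratic extension $K' = \mathbb{F}_{11}[T]/\langle T^2 - \epsilon\rangle \cong \mathbb{F}_{121}$ with $T = \sqrt{\epsilon}$, together with the matrix $P$ of \eqref{CCM}, its inverse $P^{-1}$, and $P^{(11)}$.

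For the hybrid step I would take $s_1 = 11$, $(k_1, \ldots, k_{11}) = (1, \ldots, 11)$ and $\mathcal{A}_1 = \emptyset$, compute $h := F^{p-1}$ over $\mathbb{F}_{11}[a_1, \ldots, a_{11}][x, y, z]$, substitute $x \mapsto \tfrac{X+Y}{2}$ and $y \mapsto \tfrac{X-Y}{-2\sqrt{\epsilon}}$, read off the matrix $H'$ of \eqref{BasisForH'} from the transformed polynomial, and form $H := P^{(11)} H' P^{-1}$, whose $25$ entries then lie in $\mathbb{F}_{11}[a_1, \ldots, a_{11}]$. A workable --- and, for $q = 11$, inexpensive --- choice for the inner brute force is $s_2 = 6$ with $(i_1, \ldots, i_6) = (6, 7, 8, 9, 10, 11)$ and $\mathcal{A}_2 = \{0, 1, \zeta^{(11)}\} \oplus (\mathbb{F}_{11})^{\oplus 4}$, so that $a_6, \ldots, a_{11}$ stay symbolic while $(a_1, a_2, a_3, a_4, a_5)$ is enumerated with $a_1 = c$ confined to $\{0, 1, \zeta^{(11)}\}$; for each tuple I solve the system $\{g = 0 : g \in \mathcal{S}'\}$ by a Gr\"obner basis computation, using the grevlex order on $a_1, \ldots, a_{11}$ with $a_{11} \prec \cdots \prec a_1$ and the grevlex order on $x, y, z$ with $z \prec y \prec x$. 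For every root I substitute back into $F$ and keep it precisely when $V(F)$ has exactly one singular point --- which is then automatically a node, since the leading quadratic $x^2 - \epsilon y^2$ is non-degenerate and non-split, so by Lemma~\ref{CharacterizationTrigonal} the desingularization is a trigonal curve of genus $5$ --- after which I verify that every retained $F$ is irreducible over $\overline{\mathbb{F}_{11}}$ by the test of Appendix~\ref{sec:irr}.

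By the correctness statement for Enumeration Algorithm~2, the resulting list is exactly the set of $F$ of the form \eqref{eq:quintic4q11} whose desingularization is a superspecial trigonal curve of genus $5$, and I expect it to coincide with the locus $c = 0$, $a_2 = a_3 = a_4 = a_5 = 0$, $a_8 = a_{10} = 9 a_6$, $a_9 = 4 a_7$, $a_{11} = 3 a_7$, $(a_6, a_7) \neq (0, 0)$. This would prove both directions at once: a superspecial member necessarily lies in that locus; and, conversely, since the algorithm has already filtered by the unique-singularity test and irreducibility is confirmed separately, every member of the locus is an irreducible quintic with a single node satisfying the vanishing Hasse--Witt criterion, hence its desingularization is superspecial. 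As a consistency check I would also match this locus against the family \eqref{sscurve_F11_2} of Theorem~\ref{MainTheorem}, which is the same set written with $(a, b) = (a_6, a_7)$.

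The step I expect to be the main obstacle is the Gr\"obner basis computation itself. In the non-split case the entries of $H$ are polynomials over $\mathbb{F}_{121}$ in up to eleven variables, so solving the full system symbolically is infeasible; as already in Propositions~\ref{prop:Case3q7} and \ref{prop:Case3q11}, the run only terminates in practice if the symbolic/brute-force split is chosen so that each individual Gr\"obner basis stays small, and one may have to split further by the value of $c$ (or of one more coefficient) to avoid exhausting memory. A secondary delicate point is to carry out the substitution $x \mapsto \tfrac{X+Y}{2}$, $y \mapsto \tfrac{X-Y}{-2\sqrt{\epsilon}}$ and the conjugation $H = P^{(11)} H' P^{-1}$ correctly, so that the $25$ quantities whose vanishing is tested are genuinely the entries of the Hasse--Witt matrix of the desingularization $C$ and not of the singular model $C'$.
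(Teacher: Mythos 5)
Your proposal follows the paper's proof essentially verbatim: the same decomposition $t=11$, $u=1$ with the same $p_i$ and $q_1$, the same run of Enumeration Algorithm~2 with $s_1=11$, $s_2=6$, $(i_1,\ldots,i_6)=(6,\ldots,11)$, $\mathcal{A}_2=\{0,1,\zeta^{(11)}\}\oplus(\mathbb{F}_{11})^{\oplus 4}$, the same term orderings, the same conjugation $H=P^{(11)}H'P^{-1}$, and the same irreducibility check via Appendix~\ref{sec:irr}. This matches the paper's argument, so no further comparison is needed.
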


\begin{proof}
Put $t = 11$, $u = 1$ and
\begin{eqnarray}
\{ p_1, \ldots , p_t \} & = & \{ x^4 z, x^3 y z, x^2 y^2 z, x y^3 z, y^4 z,  x^5, x^4 y, x^3 y^2, x^2 y^3, x y^4, y^5 \}, \nonumber \\
\{ q_1, \ldots , q_u \} & = & \{ (x^2 - \epsilon y^2 ) z^3 \}. \nonumber 
\end{eqnarray}
Execute Enumeration Algorithm 2 given in Subsection \ref{subsec:algorithm}.
We here give an outline of our computation together with our choices of $s_1$, $s_2$, $\{ k_1, \ldots , k_{s_1} \}$, $\{ i_1, \ldots , i_{s_2} \}$, $\mathcal{A}_1$, $\mathcal{A}_2$ and a term ordering in the algorithm.

First construct the quadratic extension field $K^{\prime}:=\mathbb{F}_{11} [T] / \langle T^2 - \epsilon \rangle \cong \mathbb{F}_{121}$, where we interpret $T = \sqrt{\epsilon}$.
Compute
\[
P = \begin{pmatrix}
1 & 1 & 0 & 0 & 0\\
\sqrt{\epsilon}^{-1}& -\sqrt{\epsilon}^{-1} & 0 & 0 & 0\\
0 & 0 & 1& 0 &0\\
0 & 0 &  0& 1 & 1 \\
0 & 0 & 0 & \sqrt{\epsilon}^{-1}& -\sqrt{\epsilon}^{-1}\\
\end{pmatrix}
\]
and $P^{-1}$.
Compute $P^{(11)}$, which is the matrix obtained by taking the $11$-th power of each entry of $P$.
\begin{enumerate}
\item[{\rm (0)}]
We set $s_1:= 11$, and $( k_1, \ldots , k_{s_1} ):= ( 1, \ldots , 11)$ (we regard the $11$ coefficients $a_1$, $a_2$, $a_3$, $a_4$, $a_5$, $a_6$, $a_7$, $a_8$, $a_9$, $a_{10}$ and $a_{11}$ as indeterminates).
Let $\mathcal{A}_1:=\emptyset$.
\end{enumerate}
We proceed with the following five steps:
\begin{enumerate}
	\item[{\rm (1)}] Compute $F:= \sum_{i=1}^t a_i p_i + \sum_{j=1}^u b_j q_j$ and $h:= ( F )^{p-1}$ over $\mathbb{F}_{11} [a_1, \ldots, a_{11}] [x, y, z]$, where $a_1, \ldots, a_{11}$ are indeterminates and $b_1=1$.
	\item[{\rm (2)}] We set $s_2 :=6 $, and $(i_1, \ldots , i_{s_2}) := ( 6, 7, 8, 9, 10, 11)$ (we regard the $6$ coefficients $a_6$, $a_7$, $a_8$, $a_9$, $a_{10}$ and $a_{11}$ as indeterminates).
	For the Gr\"{o}bner basis computation in the polynomial ring $\mathbb{F}_{11} [ a_1, a_2, a_3, a_4, a_5, a_6, a_7, a_8, a_9, a_{10}, a_{11}]$ in (5), we adopt the graded reverse lexicographic (grevlex) order with
\[
a_{11} \prec a_{10} \prec a_9 \prec a_8 \prec a_7 \prec a_6 \prec a_{5} \prec a_4 \prec a_3 \prec a_2 \prec a_1,
\]
	whereas for that in $\mathbb{F}_{11} [ x, y, z]$, the grevlex order with $z \prec y \prec x$ is adopted.
	Put $\mathcal{A}_2 := \{0, 1, \zeta^{(11)} \} \oplus (\mathbb{F}_{11})^{\oplus 4}$.
	\item[{\rm (3)}] Substitute $\frac{X+Y}{2}$ and $\frac{X-Y}{-2\sqrt{\epsilon}}$ into $x$ and $y$ in $h$.
	Let $h^{\prime}$ denote the polynomial transformed from $h$ by the above substitution.
	\item[{\rm (4)}] Let $H^{\prime}$ be the Hasse-Witt matrix with respect to \eqref{BasisForH'}, i.e., the $(i,j)$-entry of $H^{\prime}$ ($1 \leq i,j \leq 5$) is the coefficient of the monomial $B_i B_j^{-p}$ in $h^{\prime}$.
	\item[{\rm (5)}] Compute $H:=P^{(11)} H^{\prime} P^{-1}$.
	Let $\mathcal{S}$ be the set of the $25$ entries of $H$.
	Note that $\mathcal{S} \subset \mathbb{F}_{11} [a_1, \ldots, a_{11}]$.
	For each $( c_1, c_2, c_3, c_4, c_5 ) \in \mathcal{A}_2$, conduct procedures similar to the proof of Proposition \ref{prop:Case1q7}.
	\end{enumerate}
	Each quintic form obtained as an element of the output is irreducible over $\overline{\mathbb{F}_{11}}$, which we check by the method for the irreducibility test given in Appendix \ref{sec:irr}.
From the outputs, we have that the desingularization of $V (F)$ is superspecial if and only if $(a_6, a_{7}) \in (\F_{11})^{\oplus 2} \smallsetminus \{ (0,0) \}$, $a_8 = a_{10} = 9 a_6$, $a_9 = 4 a_7$, $a_{11}= 3 a_7$, $a_i = 0$ for $i =2, 3, 4 , 5$ and $c = 0$.
\end{proof}

\subsubsection{Cusp case with $q = p=11$}

\begin{prop}\label{prop:Case5q11}
Consider the quintic form
\begin{equation}
\begin{split}
F  = &  x^2 z^3 + a_1 y^3 z^2 + (a_2 x^4 + a_3 x^3 y + a_4 x^2 y^2 + b_1 x y^3 + a_5 y^4) z \\
& + a_6 x^5 + a_7 x^4 y + a_8 x^3 y^2 + a_9 x^2 y^3 + b_2 x y^4 + a_{10} y^5,
\end{split}\label{eq:quintic5q11}
\end{equation}
where $a_i \in \mathbb{F}_{11}$ for $1 \leq i \leq 10$ with $a_1 \neq 0$ and $(b_1, b_2) \in \{ (0,0), (1,0), (0,1), (1,1) \}$.
Then there does not exist any quintic form $F$ of the form \eqref{eq:quintic5q11} such that the desingularization of $V (F) \subset \mathbf{P}^2$ is a superspecial trigonal curve of genus $5$. 
\end{prop}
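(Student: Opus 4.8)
The plan is to reuse the computational template of the preceding propositions, specializing Enumeration Algorithm 1 (which covers the split node and cusp cases) to the cusp case over $\mathbb{F}_{11}$. First I would write the quintic $F$ of \eqref{eq:quintic5q11} as $F = \sum_{i=1}^{10} a_i p_i + \sum_{j=1}^{3} b_j q_j$ with $\{ p_1,\dots,p_{10}\} = \{ y^3 z^2,\, x^4 z,\, x^3 y z,\, x^2 y^2 z,\, y^4 z,\, x^5,\, x^4 y,\, x^3 y^2,\, x^2 y^3,\, y^5 \}$ and $\{ q_1, q_2, q_3 \} = \{ x y^3 z,\, x y^4,\, x^2 z^3 \}$, so that $b_3 = 1$ records the normalized leading term $x^2 z^3$ and $(b_1, b_2)$ ranges over the four pairs $\{(0,0),(1,0),(0,1),(1,1)\}$ permitted by Proposition \ref{ReductionCusp}. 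By Corollary \ref{Criterion_ssp_non-split}, the desingularization of $V(F)$ is superspecial if and only if the coefficients in $F^{10}$ of the $25$ monomials $x^{11i-i'} y^{11j-j'} z^{11k-k'}$ all vanish, where $(i,j,k)$ and $(i',j',k')$ run through $(3,1,1),(1,3,1),(2,2,1),(2,1,2),(1,2,2)$; this reduces the statement to deciding solvability over $\mathbb{F}_{11}$ of the resulting polynomial system in $a_1,\dots,a_{10}$.

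Next I would run the hybrid method exactly as in Proposition \ref{prop:Case5q7}: since $a_1 \neq 0$ is imposed by the cusp condition (it is the coefficient of $y^3 z^2$, forced to be nonzero by Lemma \ref{CharacterizationTrigonal}), the outer brute force runs over $a_1 \in \mathbb{F}_{11}^{\times}$, and over the four pairs $(b_1,b_2)$, while $a_2,\dots,a_{10}$ are treated as indeterminates, i.e.\ $s_1 = 10$, $(k_1,\dots,k_{s_1}) = (1,\dots,10)$, $\mathcal{A}_1 = \emptyset$, $s_2 = 9$, $(i_1,\dots,i_{s_2}) = (2,3,\dots,10)$ and $\mathcal{A}_2 = \mathbb{F}_{11}^{\times}$. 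For each such choice one substitutes into $F^{10}$, forms the ideal generated by the $25$ coefficient polynomials together with the linear relation $a_1 - c_1$ pinning down the substituted value, and computes a Gr\"obner basis with respect to the grevlex order $a_{10} \prec \cdots \prec a_1$ to enumerate the $\mathbb{F}_{11}$-rational solutions, while the grevlex order $z \prec y \prec x$ is used for the polynomial arithmetic in $\mathbb{F}_{11}[x,y,z]$. For every solution found one substitutes it back into $F$ and verifies via the Jacobian criterion whether $V(F)$ has a unique singular point, keeping only those that do.

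The expected outcome is that every branch returns the empty list, which is the assertion of the proposition. The main obstacle is the sheer size of $F^{10}$, a degree-$50$ form over a polynomial ring in ten parameters, so the practical point is to choose the brute-force/indeterminate split and the term orders so that each Gr\"obner basis call terminates within memory; fixing $a_1$ together with $b_1, b_2$ \emph{before} expanding $F^{10}$, in the same spirit as the genus-$4$ enumerations of \cite{KH17} and \cite{KH17a}, is what makes the computation feasible. Once the systems are solved, the uniqueness-of-singular-point check is routine and mirrors step (c) of Enumeration Algorithm 1.
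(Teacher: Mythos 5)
Your proposal follows essentially the same route as the paper: the same decomposition of $F$ into the $p_i$'s and $q_j$'s, the same reduction via Corollary \ref{Criterion_ssp_non-split} to the vanishing of the $25$ coefficients of $F^{10}$, and the same hybrid brute-force/Gr\"obner-basis enumeration of Enumeration Algorithm 1 with the final uniqueness-of-singularity check. The only divergence is a tuning choice: you brute-force only over $a_1\in\mathbb{F}_{11}^{\times}$ and leave nine unknowns per Gr\"obner basis call ($s_2=9$), whereas the paper additionally specializes $a_3$ and $a_5$ (so $s_2=7$ and $\mathcal{A}_2=\mathbb{F}_{11}^{\times}\oplus(\mathbb{F}_{11})^{\oplus 2}$) precisely to keep each Gr\"obner computation within memory --- a practical, not a logical, difference.
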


\begin{proof}
Put $t = 10$, $u = 3$ and
\begin{eqnarray}
\{ p_1, \ldots , p_t \} & = & \{ y^3 z^2, x^4 z, x^3 y z, x^2 y^2 z, y^4 z,  x^5, x^4 y, x^3 y^2, x^2 y^3, y^5 \}, \nonumber \\
\{ q_1, \ldots , q_u \} & = & \{ x y^3 z, x y^4, x^2 z^3 \}. \nonumber 
\end{eqnarray}
For each $(b_1, b_2) \in \{ (0,0), (1,0), (0,1), (1,1) \}$, execute Enumeration Algorithm 1 given in Subsection \ref{subsec:algorithm}.
We here give an outline of our computation together with our choices of $s_1$, $s_2$, $\{ k_1, \ldots , k_{s_1} \}$, $\{ i_1, \ldots , i_{s_2} \}$, $\mathcal{A}_1$, $\mathcal{A}_2$ and a term ordering in the algorithm.
\begin{enumerate}
\item[{\rm (0)}]
We set $s_1:= 10$, and $( k_1, \ldots , k_{s_1} ):= ( 1, \ldots , 10)$ (we regard the $10$ coefficients $a_1$, $a_2$, $a_3$, $a_4$, $a_5$, $a_6$, $a_7$, $a_8$, $a_9$ and $a_{10}$ as indeterminates).
Let $\mathcal{A}_1:=\emptyset$.
\end{enumerate}
We proceed with the following three steps:
\begin{enumerate}
	\item[{\rm (1)}] Compute $F:= \sum_{i=1}^t a_i p_i + \sum_{j=1}^u b_j q_j$ and $h:= ( F )^{p-1}$ over $\mathbb{F}_{11} [a_1, \ldots, a_{10}] [x, y, z]$, where $a_1, \ldots, a_{10}$ are indeterminates and $b_3=1$. 
	\item[{\rm (2)}] We set $s_2 := 7$, and $(i_1, \ldots , i_{s_2}) := ( 2, 4, 6, 7, 8, 9, 10)$ (we regard the $7$ coefficients $a_2$, $a_4$, $a_6$, $a_7$, $a_8$, $a_9$ and $a_{10}$ as indeterminates).
	For the Gr\"{o}bner basis computation in the polynomial ring $\mathbb{F}_{11} [ a_1, a_2, a_3, a_4, a_5, a_6, a_7, a_8, a_9, a_{10}]$ in (3), we adopt the graded reverse lexicographic (grevlex) order with
\[
a_{10} \prec a_9 \prec a_8 \prec a_7 \prec a_6 \prec a_5 \prec a_4 \prec a_3 \prec a_2 \prec a_1,
\]
	whereas for that in $\mathbb{F}_{11} [ x, y, z]$, the grevlex order with $z \prec y \prec x$ is adopted.
	Put $\mathcal{A}_2 := \mathbb{F}_{11}^{\times} \oplus (\mathbb{F}_{11})^{\oplus 2}$.
	\item[{\rm (3)}] Let $\mathcal{S} \subset \mathbb{F}_{11} [a_1, \ldots, a_{10}]$ be the set of the coefficients of the $25$ monomials in $h$, given in Corollary \ref{Criterion_ssp_non-split}.
	For each $( c_1, c_3, c_5 ) \in \mathcal{A}_2$, conduct procedures similar to the proof of Proposition \ref{prop:Case1q7}.
\end{enumerate}
From the outputs, we have that there does not exist any quintic form $F \in \mathbb{F}_{11}[x,y,z]$ of the form \eqref{eq:quintic5q11} such that the desingularization of $V (F) \subset \mathbf{P}^2$ is a superspecial trigonal curve of genus $5$.
\end{proof}

\subsubsection{Split node case (1) with $q = p=13$}

\begin{prop}\label{prop:Case1q13}
Consider the quintic form
\begin{equation}
\begin{split}
F  = &  x y z^3 + (x^3 + b_1 y^3) z^2 + (a_1 x^4 + a_2 x^3 y + a_3 x^2 y^2 + a_4 x y^3 + a_5 y^4) z \\
& + a_6 x^5 + a_7 x^4 y + a_8 x^3 y^2 + a_9 x^2 y^3 + a_{10} x y^4 + a_{11} y^5,
\end{split}\label{eq:quintic1q13}
\end{equation}
where $a_i \in \mathbb{F}_{13}$ for $1 \leq i \leq 11$ and $b_1 \in \{ 0, 1 , \zeta^{(13)} \}$.
Then there does not exist any quintic form $F$ of the form \eqref{eq:quintic1q13} such that the desingularization of $V (F) \subset \mathbf{P}^2$ is a superspecial trigonal curve of genus $5$. 
\end{prop}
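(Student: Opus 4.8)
The plan is to mirror the proof of Proposition \ref{prop:Case1q11}, running Enumeration Algorithm 1 with the prime $p = 13$. First I would put $t = 11$, $u = 3$, take $\{p_1,\ldots,p_{11}\}$ to be the eleven quintic monomials
\[
x^4 z,\ x^3 y z,\ x^2 y^2 z,\ x y^3 z,\ y^4 z,\ x^5,\ x^4 y,\ x^3 y^2,\ x^2 y^3,\ x y^4,\ y^5,
\]
and $\{q_1,q_2,q_3\} = \{y^3 z^2,\, x^3 z^2,\, x y z^3\}$, fixing $(b_2, b_3) = (1,1)$ so that $F = \sum_{i=1}^{11} a_i p_i + b_1 q_1 + q_2 + q_3$ is exactly the quintic \eqref{eq:quintic1q13}. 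For each of the three admissible values $b_1 \in \{0, 1, \zeta^{(13)}\}$ one executes the algorithm: in step (0) all eleven coefficients $a_1, \ldots, a_{11}$ are regarded as indeterminates, and in step (1) one expands $h := F^{12}$ over $\mathbb{F}_{13}[a_1,\ldots,a_{11}][x,y,z]$.

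Next, in step (2) I would select which of the $a_i$ to keep symbolic and over which to brute-force (a subset $\mathcal{A}_2 \subset \mathbb{F}_{13}^{\oplus k}$), together with a grevlex term order on the remaining $a_i$; by Corollary \ref{Criterion_ssp_non-split}, superspeciality of the desingularization of $V(F)$ amounts to the vanishing of the $25$ designated coefficients of $h$, which constitute the system $\mathcal{S}$. In step (3), for each point of $\mathcal{A}_2$ one substitutes, solves the resulting zero-dimensional polynomial system by a Gr\"obner basis computation, and for every solution checks whether $V(F)$ has a single singular point, so that the desingularization has geometric genus $5$. By the correctness of Enumeration Algorithm 1 together with Corollary \ref{Criterion_ssp_non-split}, the union of the output lists over the three values of $b_1$ is precisely the set of quintics \eqref{eq:quintic1q13} whose desingularizations are superspecial trigonal curves of genus $5$; the claim is that this union is empty.

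The main obstacle is purely computational. Since $p = 13$, the polynomial $F^{12}$ is considerably larger and denser than the powers $F^{6}$ and $F^{10}$ arising in the $q = 49$ and $q = 11$ cases, and $\mathbb{F}_{13}$ is larger than $\mathbb{F}_{11}$, so both the symbolic expansion in step (1) and the Gr\"obner basis runs in step (3) are heavier; keeping each Magma run within available memory is the delicate point. As in the proof of Proposition \ref{prop:Case1q11}, I would split the computation into subcases when necessary --- for instance treating $b_1 = 0$ and $b_1 \neq 0$ separately, and adjusting in each the partition of the $a_i$ into brute-forced and symbolic ones --- so that every individual run terminates. Once all runs complete and return empty lists, the non-existence statement follows.
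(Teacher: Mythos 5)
Your proposal follows essentially the same route as the paper: run Enumeration Algorithm 1 with the same monomial sets $\{p_i\}$, $\{q_j\}$, reduce superspeciality to the vanishing of the $25$ coefficients from Corollary \ref{Criterion_ssp_non-split}, and dispose of the resulting systems by the brute-force/Gr\"obner hybrid, checking the single-singular-point condition on each solution. The only difference is a parameter choice you correctly anticipate needing: to control memory the paper takes $\mathcal{A}_1=\mathbb{F}_{13}$ (brute-forcing $a_1$ \emph{before} expanding $F^{12}$, so $s_1=10$) and then $s_2=6$ with $(a_2,\ldots,a_5)$ ranging over $\mathcal{A}_2=(\mathbb{F}_{13})^{\oplus 4}$, splitting the runs by $b_1=0$ versus $b_1\neq 0$.
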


\begin{proof}
Put $t = 11$, $u = 3$ and
\begin{eqnarray}
\{ p_1, \ldots , p_t \} & = & \{ x^4 z, x^3 y z, x^2 y^2 z, x y^3 z, y^4 z,  x^5, x^4 y, x^3 y^2, x^2 y^3, x y^4, y^5 \}, \nonumber \\
\{ q_1, \ldots , q_u \} & = & \{ y^3 z^2, x y z^3, x^3 z^2 \}. \nonumber 
\end{eqnarray}
For each $b_1 \in \{ 0, 1, \zeta^{(13)} \}$, execute Enumeration Algorithm 1 given in Subsection \ref{subsec:algorithm}.
We here give an outline of our computation together with our choices of $s_1$, $s_2$, $\{ k_1, \ldots , k_{s_1} \}$, $\{ i_1, \ldots , i_{s_2} \}$, $\mathcal{A}_1$, $\mathcal{A}_2$ and a term ordering in the algorithm.
\begin{enumerate}
\item[{\rm (0)}]
We set $s_1:= 10$, and $( k_1, \ldots , k_{s_1} ):= ( 2, \ldots , 11)$ (we regard the $10$ coefficients $a_2$, $a_3$, $a_4$, $a_5$, $a_6$, $a_7$, $a_8$, $a_9$, $a_{10}$ and $a_{11}$ as indeterminates).
Let $\mathcal{A}_1:=\mathbb{F}_{13}$.
\end{enumerate}
For each $c_1 \in \mathcal{A}_1$, we proceed with the following three steps:
\begin{enumerate}
	\item[{\rm (1)}] Substitute $c_1$ into $a_1$ in $F$, and compute $F:= \sum_{i=1}^t a_i p_i + \sum_{j=1}^u b_j q_j$ and $h:= ( F )^{p-1}$ over $\mathbb{F}_{13} [a_2, \ldots, a_{11}] [x, y, z]$, where $a_2, \ldots, a_{11}$ are indeterminates and $(b_2,b_3)=(1,1)$. 
	\item[{\rm (2)}] We set $s_2 := 6$, and $(i_1, \ldots , i_{s_2}) := (  6, 7, 8, 9, 10, 11)$ (we regard the $6$ coefficients $a_6$, $a_7$, $a_8$, $a_9$, $a_{10}$ and $a_{11}$ as indeterminates).
	For the Gr\"{o}bner basis computation in the polynomial ring $\mathbb{F}_{13} [ a_2, a_3, a_4, a_5, a_6, a_7, a_8, a_9, a_{10}, a_{11}]$ in (3), we adopt the graded reverse lexicographic (grevlex) order with
\[
a_{11} \prec a_{10} \prec a_9 \prec a_8 \prec a_7 \prec a_6 \prec a_{5} \prec a_4 \prec a_3 \prec a_2,
\]
	whereas for that in $\mathbb{F}_{13} [ x, y, z]$, the grevlex order with $z \prec y \prec x$ is adopted.
	Put $\mathcal{A}_2 := ( \mathbb{F}_{13} )^{\oplus 4}$.
	\item[{\rm (3)}] Let $\mathcal{S} \subset \mathbb{F}_{13} [a_2, \ldots, a_{11}]$ be the set of the coefficients of the $25$ monomials in $h$, given in Corollary \ref{Criterion_ssp_non-split}.
	For each $( c_2, c_3, c_4, c_5 ) \in \mathcal{A}_2$, conduct procedures similar to the proof of Proposition \ref{prop:Case1q7}.
\end{enumerate}
From the outputs, we have that there does not exist any quintic form $F$ of the form \eqref{eq:quintic1q13} such that the desingularization of $V (F) \subset \mathbf{P}^2$ is a superspecial trigonal curve of genus $5$. 
\end{proof}

\begin{rem}
When we executed the computation described in the proof of Proposition \ref{prop:Case1q13} over Magma, we divided our computation program into two files (this is our technical strategy to avoid the out of memory errors).
One of them is a code for the case $b_1 \neq 0$ (i.e., $b_1 \in \{ 1, \zeta^{(13)} \}$), and the other is a code for the case $b_1=0$.
\end{rem}

\subsubsection{Split node case (2) with $q = p=13$}

\begin{prop}\label{prop:Case2q13}
Consider the quintic form
\begin{equation}
\begin{split}
F  = &  x y z^3 + (c_1 x^4 + c_2 x^3 y + a_3 x^2 y^2 + a_4 x y^3 + a_5 y^4) z \\
& + a_6 x^5 + a_7 x^4 y + a_8 x^3 y^2 + a_9 x^2 y^3 + a_{10} x y^4 + a_{11} y^5,
\end{split}\label{eq:quintic2q13}
\end{equation}
where $a_i \in \mathbb{F}_{13}$ for $3 \leq i \leq 11$ and $(c_1, c_2) \in \{ (0,0), (1,0), (0,1), (1,1), (1,\zeta^{(13)}) \}$.
Then there does not exist any quintic form $F$ of the form \eqref{eq:quintic2q13} such that the desingularization of $V (F) \subset \mathbf{P}^2$ is a superspecial trigonal curve of genus $5$. 
\end{prop}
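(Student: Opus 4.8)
The plan is to treat Proposition \ref{prop:Case2q13} exactly as the split-node type (2) cases already handled in lower characteristic, so that the argument reduces to an application of Enumeration Algorithm 1 together with the superspeciality criterion of Corollary \ref{Criterion_ssp_non-split}. First I would fix $t=11$, $u=1$, and set
\[
\{p_1,\ldots,p_t\}=\{x^4z,\,x^3yz,\,x^2y^2z,\,xy^3z,\,y^4z,\,x^5,\,x^4y,\,x^3y^2,\,x^2y^3,\,xy^4,\,y^5\},\qquad \{q_1\}=\{xyz^3\},
\]
writing $F=\sum_{i=1}^{11}a_ip_i+b_1q_1$ with $b_1=1$, so that the coefficients $(c_1,c_2)$ of \eqref{eq:quintic2q13} correspond to $(a_1,a_2)$. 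By Corollary \ref{Criterion_ssp_non-split}, the desingularization of $V(F)$ is superspecial precisely when the $25$ designated coefficients of $h:=F^{p-1}=F^{12}$ all vanish; the task is therefore to decide, for each of the five prescribed pairs $(c_1,c_2)\in\{(0,0),(1,0),(0,1),(1,1),(1,\zeta^{(13)})\}$, whether the resulting polynomial system over $\F_{13}$ has a solution.

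To keep the Gr\"obner basis computations tractable I would use the doubled hybrid approach of Proposition \ref{prop:Case2q11}: in step (0) keep all eleven $a_i$ symbolic ($\mathcal A_1=\emptyset$) while forming $h$; in step (2) retain a modest number of them (about $s_2=6$ or $7$) as true indeterminates and let the complementary coefficients, together with $(a_1,a_2)=(c_1,c_2)$, range over $\mathcal A_2=\{(0,0),(1,0),(0,1),(1,1),(1,\zeta^{(13)})\}\oplus(\F_{13})^{\oplus m}$; and in step (3), for each such substitution, compute a grevlex Gr\"obner basis of the ideal generated by the $25$ coefficient polynomials together with the linear relations pinning down the brute-forced variables, using the order $a_{11}\prec\cdots\prec a_1$ on $\F_{13}[a_1,\ldots,a_{11}]$ and the grevlex order $z\prec y\prec x$ for the polynomial arithmetic in forming $h$. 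For any solution produced I would substitute back into $F$ and apply the Jacobian criterion to decide whether $V(F)$ has a unique singular point (equivalently, whether the desingularization has genus $5$), discarding the rest.

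The anticipated output, to be recorded as the proof, is that the algorithm returns the empty list over all five values of $(c_1,c_2)$: in each brute-force branch the ideal of the $25$ coefficients is already the unit ideal, or every candidate root fails the unique-singularity test. This yields the claimed nonexistence.

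The only real obstacle is computational, not conceptual: $F^{12}$ in eleven parametric variables over $\F_{13}$ is a large polynomial and the Gr\"obner basis runs are memory-hungry. As in the rest of the paper this is controlled by the reductions of Section \ref{SectionReduction}, which already normalise the $z^3$-part to $xyz^3$ and kill the $z^2$-terms, and, if needed, by splitting the enumeration into separate runs per value of $(c_1,c_2)$ as was done for the $13$-case of split node type (1) (see the remark after Proposition \ref{prop:Case1q13}).
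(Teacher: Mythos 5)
Your proposal matches the paper's own proof essentially verbatim: the same choice of $t=11$, $u=1$, the same monomial sets with $q_1=xyz^3$ and $b_1=1$, the same appeal to Corollary \ref{Criterion_ssp_non-split} via Enumeration Algorithm 1, the same hybrid split (the paper takes $s_2=6$, keeping $a_6,\ldots,a_{11}$ symbolic and brute-forcing $(c_1,c_2)$ together with $a_3,a_4,a_5$, i.e.\ $\mathcal{A}_2=\{(0,0),(1,0),(0,1),(1,1),(1,\zeta^{(13)})\}\oplus(\F_{13})^{\oplus 3}$), the same grevlex orders, and the same final unique-singularity check. No substantive difference.
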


\begin{proof}
Put $t = 11$, $u = 1$ and
\begin{eqnarray}
\{ p_1, \ldots , p_t \} & = & \{  x^4 z, x^3 y z, x^2 y^2 z, x y^3 z, y^4 z,  x^5, x^4 y, x^3 y^2, x^2 y^3, x y^4, y^5 \}, \nonumber \\
\{ q_1, \ldots , q_u \} & = & \{ x y z^3 \}. \nonumber 
\end{eqnarray}
Execute Enumeration Algorithm 1 given in Subsection \ref{subsec:algorithm}.
We here give an outline of our computation together with our choices of $s_1$, $s_2$, $\{ k_1, \ldots , k_{s_1} \}$, $\{ i_1, \ldots , i_{s_2} \}$, $\mathcal{A}_1$, $\mathcal{A}_2$ and a term ordering in the algorithm.
\begin{enumerate}
\item[{\rm (0)}]
We set $s_1:= 11$, and $( k_1, \ldots , k_{s_1} ):= ( 1, \ldots , 11)$ (we regard the $11$ coefficients $a_1$, $a_2$, $a_3$, $a_4$, $a_5$, $a_6$, $a_7$, $a_8$, $a_9$, $a_{10}$ and $a_{11}$ as indeterminates).
Let $\mathcal{A}_1:=\emptyset$.
\end{enumerate}
We proceed with the following three steps:
\begin{enumerate}
	\item[{\rm (1)}] Compute $F:= \sum_{i=1}^t a_i p_i + \sum_{j=1}^u b_j q_j$ and $h:= ( F )^{p-1}$ over $\mathbb{F}_{13} [a_1, \ldots, a_{11}] [x, y, z]$, where $a_1, \ldots, a_{11}$ are indeterminates and $b_1=1$. 
	\item[{\rm (2)}] We set $s_2 := 6$, and $(i_1, \ldots , i_{s_2}) := ( 6, 7, 8, 9, 10, 11)$ (we regard the $6$ coefficients $a_6$, $a_7$, $a_8$, $a_9$, $a_{10}$ and $a_{11}$ as indeterminates).
	For the Gr\"{o}bner basis computation in the polynomial ring $\mathbb{F}_{13} [ a_1, a_2, a_3, a_4, a_5, a_6, a_7, a_8, a_9, a_{10}, a_{11}]$ in (3), we adopt the graded reverse lexicographic (grevlex) order with
\[
a_{11} \prec a_{10} \prec a_9 \prec a_8 \prec a_7 \prec a_6 \prec a_{5} \prec a_4 \prec a_3 \prec a_2 \prec a_1,
\]
	whereas for that in $\mathbb{F}_{13} [ x, y, z]$, the grevlex order with $z \prec y \prec x$ is adopted.
	Put $\mathcal{A}_2 := \{ (0,0), (1,0), (0,1), (1,1), (1,\zeta^{(13)}) \} \oplus (\mathbb{F}_{13})^{\oplus 3}$.
	\item[{\rm (3)}] Let $\mathcal{S} \subset \mathbb{F}_{13} [a_1, \ldots, a_{11}]$ be the set of the coefficients of the $25$ monomials in $h$, given in Corollary \ref{Criterion_ssp_non-split}.
	For each $( c_1, c_2, c_3, c_4, c_5 ) \in \mathcal{A}_2$, conduct procedures similar to the proof of Proposition \ref{prop:Case1q7}.
\end{enumerate}
From the outputs, we have that there does not exist any quintic form $F$ of the form \eqref{eq:quintic2q13} such that the desingularization of $V (F) \subset \mathbf{P}^2$ is a superspecial trigonal curve of genus $5$. 
\end{proof}

\subsubsection{Non-split node case (1) with $q = p=13$}

\begin{prop}\label{prop:Case3q13}
Consider the quintic form
\begin{equation}
\begin{split}
F  = &  (x^2 - \epsilon y^2 ) z^3 + x (x^2 + 3 \epsilon y^2 ) z^2 + 0 \cdot y (3 x^2 + \epsilon y^2 ) z^2 \\
& + (a_1 x^4 + a_2 x^3 y + a_3 x^2 y^2 + a_4 x y^3 + a_5 y^4) z \\
& + a_6 x^5 + a_7 x^4 y + a_8 x^3 y^2 + a_9 x^2 y^3 + a_{10} x y^4 + a_{11} y^5,
\end{split}\label{eq:quintic3q13}
\end{equation}
where $a_i \in \mathbb{F}_{13}$ for $1 \leq i \leq 11$ and $\epsilon \in \mathbb{F}_{13}^{\times} \smallsetminus (\mathbb{F}_{13}^{\times})^2$.
Then there does not exist any quintic form $F$ of the form \eqref{eq:quintic3q13} such that the desingularization of $V (F) \subset \mathbf{P}^2$ is a superspecial trigonal curve of genus $5$. 
\end{prop}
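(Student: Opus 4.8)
The plan is to invoke Enumeration Algorithm 2 of Subsection \ref{subsec:algorithm}, following the pattern of the proofs of Propositions \ref{prop:Case3q7} and \ref{prop:Case3q11}. First I would set $t=11$, $u=3$, take $\{p_1,\ldots,p_{11}\}$ to be the eleven degree-$5$ monomials $x^4z,\,x^3yz,\,x^2y^2z,\,xy^3z,\,y^4z,\,x^5,\,x^4y,\,x^3y^2,\,x^2y^3,\,xy^4,\,y^5$ bearing the indeterminate coefficients $a_1,\ldots,a_{11}$, and take $\{q_1,q_2,q_3\}=\{\,y(3x^2+\epsilon y^2)z^2,\ (x^2-\epsilon y^2)z^3,\ x(x^2+3\epsilon y^2)z^2\,\}$ with the fixed coefficients $(b_1,b_2,b_3)=(0,1,1)$ dictated by the shape \eqref{eq:quintic3q13} (here $b=0$ since $13\not\equiv-1\pmod 3$). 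To deal with the non-split quadratic factor I would fix a square root of $\epsilon$ by working over $K'=\mathbb{F}_{13}[T]/(T^2-\epsilon)\cong\mathbb{F}_{169}$ and precompute the coordinate-change matrix $P$ associated with \eqref{CCM}, its inverse $P^{-1}$, and the matrix $P^{(13)}$ obtained by raising each entry of $P$ to the $13$th power.

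Next comes the superspeciality test through the hybrid method. I would compute $h:=F^{p-1}=F^{12}$ over $\mathbb{F}_{13}[a_1,\ldots,a_{11}][x,y,z]$, substitute $x\mapsto\frac{X+Y}{2}$ and $y\mapsto\frac{X-Y}{-2\sqrt{\epsilon}}$, extract from the result the Hasse--Witt matrix $H'$ with respect to the basis \eqref{BasisForH'} (its $(i,j)$-entry being the coefficient of $B_iB_j^{-p}$), and then form $H:=P^{(13)}H'P^{-1}$; by the description of the Hasse--Witt matrix in the non-split node case and by Corollary \ref{Criterion_ssp_non-split}, the desingularization of $V(F)$ is superspecial precisely when all $25$ entries of $H$ vanish. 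I would then run the brute-force over a well-chosen subset of the coefficients --- for instance keeping $a_6,\ldots,a_{11}$ (or some convenient partial selection, as in the proof of Proposition \ref{prop:Case3q7}) as indeterminates and letting the remaining $a_i$ range over the corresponding copies of $\mathbb{F}_{13}$ --- substitute each tuple into the entries of $H$, adjoin the linear equations pinning down the specialized variables, and compute a Gr\"obner basis of the resulting ideal in $\mathbb{F}_{13}[a_1,\ldots,a_{11}]$ under the grevlex order with $a_{11}\prec\cdots\prec a_1$. For every common zero I would substitute back into $F$ and decide whether $V(F)\subset\mathbf{P}^2$ has a unique singular point.

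I expect the enumeration to return the empty list, which immediately yields the claimed non-existence of a superspecial trigonal curve of genus $5$ over $\mathbb{F}_{13}$ in this case. Correctness of the procedure is a formal consequence of Corollary \ref{Criterion_ssp_non-split} and the construction of Enumeration Algorithm 2, so the genuine difficulty is purely computational: $F^{12}$ in eleven parameters and three variables is a bulky polynomial, and the numerous specialized Gr\"obner basis computations over $\mathbb{F}_{13}$ must be organized with care --- by a judicious choice of which $a_i$ to brute-force, and, if needed, by splitting the run into several Magma sessions to forestall out-of-memory errors, exactly as was done for the split node case over $\mathbb{F}_{13}$ in Proposition \ref{prop:Case1q13} --- so that the whole computation terminates in practical time.
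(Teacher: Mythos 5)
Your proposal matches the paper's proof essentially verbatim: the paper likewise runs Enumeration Algorithm 2 with the same $p_i$'s, the same three extra forms $q_j$ (merely listed in a different order, with coefficients $(1,1,0)$ rather than your $(0,1,1)$, giving the identical $F$), the same construction of $K'$, $P$, $P^{-1}$, $P^{(13)}$, and the same hybrid split — keeping $a_6,\ldots,a_{11}$ as Gr\"obner indeterminates under grevlex with $a_{11}\prec\cdots\prec a_1$ while brute-forcing $(a_1,\ldots,a_5)\in(\mathbb{F}_{13})^{\oplus 5}$ — and concludes from the empty output. No substantive difference.
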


\begin{proof}
Put $t = 11$, $u = 3$ and
\begin{eqnarray}
\{ p_1, \ldots , p_t \} & = & \{ x^4 z, x^3 y z, x^2 y^2 z, x y^3 z, y^4 z,  x^5, x^4 y, x^3 y^2, x^2 y^3, x y^4, y^5 \}, \nonumber \\
\{ q_1, \ldots , q_u \} & = & \{ (x^2 - \epsilon y^2 ) z^3,  x (x^2 + 3 \epsilon y^2 ) z^2, y (3 x^2 + \epsilon y^2 ) z^2 \}. \nonumber 
\end{eqnarray}
Execute Enumeration Algorithm 2 given in Subsection \ref{subsec:algorithm}.
We here give an outline of our computation together with our choices of $s_1$, $s_2$, $\{ k_1, \ldots , k_{s_1} \}$, $\{ i_1, \ldots , i_{s_2} \}$, $\mathcal{A}_1$, $\mathcal{A}_2$ and a term ordering in the algorithm.

First construct the quadratic extension field $K^{\prime}:=\mathbb{F}_{13} [T] / \langle T^2 - \epsilon \rangle \cong \mathbb{F}_{169}$, where we interpret $T = \sqrt{\epsilon}$.
Compute
\[
P = \begin{pmatrix}
1 & 1 & 0 & 0 & 0\\
\sqrt{\epsilon}^{-1}& -\sqrt{\epsilon}^{-1} & 0 & 0 & 0\\
0 & 0 & 1& 0 &0\\
0 & 0 &  0& 1 & 1 \\
0 & 0 & 0 & \sqrt{\epsilon}^{-1}& -\sqrt{\epsilon}^{-1}\\
\end{pmatrix}
\]
and $P^{-1}$.
Compute $P^{(13)}$, which is the matrix obtained by taking the $13$-th power of each entry of $P$.
\begin{enumerate}
\item[{\rm (0)}]
We set $s_1:= 11$, and $( k_1, \ldots , k_{s_1} ):= ( 1, \ldots , 11)$ (we regard the $11$ coefficients $a_1$, $a_2$, $a_3$, $a_4$, $a_5$, $a_6$, $a_7$, $a_8$, $a_9$, $a_{10}$ and $a_{11}$ as indeterminates).
Let $\mathcal{A}_1:=\emptyset$.
\end{enumerate}
We proceed with the following five steps:
\begin{enumerate}
	\item[{\rm (1)}] Compute $F:= \sum_{i=1}^t a_i p_i + \sum_{j=1}^u b_j q_j$ and $h:= ( F )^{p-1}$ over $\mathbb{F}_{13} [a_1, \ldots, a_{11}] [x, y, z]$, where $a_1, \ldots, a_{11}$ are indeterminates and $(b_1,b_2,b_3)=(1,1,0)$.
	\item[{\rm (2)}] We set $s_2 := 6$, and $(i_1, \ldots , i_{s_2}) := ( 6, 7, 8, 9, 10, 11)$ (we regard the $6$ coefficients $a_6$, $a_7$, $a_8$, $a_9$, $a_{10}$ and $a_{11}$ as indeterminates).
	For the Gr\"{o}bner basis computation in the polynomial ring $\mathbb{F}_{13} [ a_1, a_2, a_3, a_4, a_5, a_6, a_7, a_8, a_9, a_{10}, a_{11}]$ in (5), we adopt the graded reverse lexicographic (grevlex) order with
\[
a_{11} \prec a_{10} \prec a_9 \prec a_8 \prec a_7 \prec a_6 \prec a_{5} \prec a_4 \prec a_3 \prec a_2 \prec a_1,
\]
	whereas for that in $\mathbb{F}_{13} [ x, y, z]$, the grevlex order with $z \prec y \prec x$ is adopted.
	Put $\mathcal{A}_2 := ( \mathbb{F}_{13} )^{\oplus 5}$.
	\item[{\rm (3)}] Substitute $\frac{X+Y}{2}$ and $\frac{X-Y}{-2\sqrt{\epsilon}}$ into $x$ and $y$ in $h$.
	Let $h^{\prime}$ denote the polynomial transformed from $h$ by the above substitution.
	\item[{\rm (4)}] Let $H^{\prime}$ be the Hasse-Witt matrix with respect to \eqref{BasisForH'}, i.e., the $(i,j)$-entry of $H^{\prime}$ ($1 \leq i,j \leq 5$) is the coefficient of the monomial $B_i B_j^{-p}$ in $h^{\prime}$.
	\item[{\rm (5)}] Compute $H:=P^{(13)} H^{\prime} P^{-1}$.
	Let $\mathcal{S}$ be the set of the $25$ entries of $H$.
	Note that $\mathcal{S} \subset \mathbb{F}_{13} [a_1, \ldots, a_{11}]$.
	For each $( c_1, c_2, c_3, c_4, c_5 ) \in \mathcal{A}_2$, conduct procedures similar to the proof of Proposition \ref{prop:Case1q7}.
	\end{enumerate}
From the outputs, we have that there does not exist any quintic form $F$ of the form \eqref{eq:quintic3q13} such that the desingularization of $V (F) \subset \mathbf{P}^2$ is a superspecial trigonal curve of genus $5$. 
\end{proof}

\subsubsection{Non-split node case (2) with $q = p=13$}

\begin{prop}\label{prop:Case4q13}
Consider the quintic form
\begin{equation}
\begin{split}
F  = &  (x^2 - \epsilon y^2 ) z^3 + (c x^4 + a_2 x^3 y + a_3 x^2 y^2 + a_4 x y^3 + a_5 y^4) z \\
& + a_6 x^5 + a_7 x^4 y + a_8 x^3 y^2 + a_9 x^2 y^3 + a_{10} x y^4 + a_{11} y^5,
\end{split}\label{eq:quintic4q13}
\end{equation}
where $a_i \in \mathbb{F}_{13}$ for $2 \leq i \leq 11$ and $c \in \{ 1, \zeta^{(13)} \}$.
Then there does not exist any quintic form $F$ of the form \eqref{eq:quintic4q13} such that the desingularization of $V (F) \subset \mathbf{P}^2$ is a superspecial trigonal curve of genus $5$. 
\end{prop}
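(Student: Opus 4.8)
The plan is to carry out \textbf{Enumeration Algorithm 2} of Subsection \ref{subsec:algorithm}, specialised to $q = p = 13$, exactly as in the proofs of Propositions \ref{prop:Case3q7}, \ref{prop:Case4q7}, \ref{prop:Case3q11} and \ref{prop:Case4q11}. Since $C$ is of non-split node type, I will compute its Hasse--Witt matrix via the base change \eqref{CCM}: work over the quadratic extension $K' := \mathbb{F}_{13}[T]/\langle T^2 - \epsilon\rangle \cong \mathbb{F}_{169}$ with $T = \sqrt{\epsilon}$, form the matrices $P$, $P^{-1}$ and $P^{(13)}$ (the entrywise $13$-th power of $P$), substitute $x = \frac{X+Y}{2}$, $y = \frac{X-Y}{-2\sqrt{\epsilon}}$ into $F^{p-1} = F^{12}$, read off the matrix $H'$ whose $(i,j)$-entry is the coefficient of $B_i B_j^{-13}$ for the basis $B_1, \ldots, B_5$ of \eqref{BasisForH'}, and set $H := P^{(13)} H' P^{-1}$. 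By Corollary \ref{Criterion_ssp_non-split} and the discussion preceding Proposition \ref{HW_ssp_non-split}, the desingularization of $V(F)$ is superspecial if and only if all $25$ entries of $H$ vanish.

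Concretely, I would take $t = 11$, $u = 1$,
\[
\{p_1, \ldots, p_{11}\} = \{x^4 z,\, x^3 y z,\, x^2 y^2 z,\, x y^3 z,\, y^4 z,\, x^5,\, x^4 y,\, x^3 y^2,\, x^2 y^3,\, x y^4,\, y^5\},\qquad \{q_1\} = \{(x^2 - \epsilon y^2) z^3\},
\]
and $b_1 = 1$, so that $a_1$ plays the role of the coefficient $c$ of $x^4 z$ in \eqref{eq:quintic4q13}. For each $c \in \{1, \zeta^{(13)}\}$ I run the algorithm with all eleven $a_i$ symbolic at stage (0); at stage (2) I keep $a_6, \ldots, a_{11}$ symbolic and let $(a_1, a_2, a_3, a_4, a_5)$ range over $\mathcal{A}_2 := \{c\} \oplus (\mathbb{F}_{13})^{\oplus 4}$ in the brute-force loop, adopting the grevlex order $a_{11} \prec \cdots \prec a_1$ on the parameter ring and the grevlex order $z \prec y \prec x$ on $\mathbb{F}_{13}[x,y,z]$ for the Gr\"{o}bner basis computations in stage (5). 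For each zero of the resulting system I substitute the values back into $F$ and decide, by the genus-versus-singularity bound used in Lemma \ref{CharacterizationTrigonal}, whether $V(F) \subset \mathbf{P}^2$ has a unique singular point; the list collected this way is the output.

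I expect the main obstacle to be purely computational. The hybrid method must be balanced --- which $a_i$ to brute-force, which term orders to use, and (as already done for $q = 13$ elsewhere in this section) possibly splitting the two values of $c$ into separate program files --- so that the roughly $2 \cdot 13^4$ Gr\"{o}bner basis and $5 \times 5$ matrix computations, each involving the degree-$60$ form $F^{12}$ with coefficients in $\mathbb{F}_{169}[a_1, \ldots, a_{11}]$, terminate within the available memory and time; this is the same tuning issue encountered in the genus-$4$ enumerations of \cite{KH17, KH17a}. Once the runs complete, the output is empty, which yields the stated non-existence. No theoretical ingredient beyond Corollary \ref{Criterion_ssp_non-split} and Lemma \ref{CharacterizationTrigonal} is required.
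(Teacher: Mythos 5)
Your proposal follows essentially the same route as the paper: it invokes Enumeration Algorithm 2 with the identical decomposition $t=11$, $u=1$, $\{q_1\}=\{(x^2-\epsilon y^2)z^3\}$, $b_1=1$, the same construction of $K'$, $P$, $P^{(13)}$, the same choice of keeping $a_6,\ldots,a_{11}$ symbolic with the grevlex orders, and the same appeal to Corollary \ref{Criterion_ssp_non-split} plus the final singularity check. The only divergence is the brute-force set $\mathcal{A}_2$ (you take $\{1,\zeta^{(13)}\}\oplus(\mathbb{F}_{13})^{\oplus 4}$, which matches the statement's parameter space directly, whereas the paper lists $\{(0,0),(1,0),(0,1),(1,1),(1,\zeta^{(13)})\}\oplus(\mathbb{F}_{13})^{\oplus 3}$); this is a tuning detail of the hybrid method, not a change of approach.
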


\begin{proof}
Put $t = 11$, $u = 1$ and
\begin{eqnarray}
\{ p_1, \ldots , p_t \} & = & \{ x^4 z, x^3 y z, x^2 y^2 z, x y^3 z, y^4 z,  x^5, x^4 y, x^3 y^2, x^2 y^3, x y^4, y^5 \}, \nonumber \\
\{ q_1, \ldots , q_u \} & = & \{ (x^2 - \epsilon y^2 ) z^3 \}. \nonumber 
\end{eqnarray}
Execute Enumeration Algorithm 2 given in Subsection \ref{subsec:algorithm}.
We here give an outline of our computation together with our choices of $s_1$, $s_2$, $\{ k_1, \ldots , k_{s_1} \}$, $\{ i_1, \ldots , i_{s_2} \}$, $\mathcal{A}_1$, $\mathcal{A}_2$ and a term ordering in the algorithm.

First construct the quadratic extension field $K^{\prime}:=\mathbb{F}_{13} [T] / \langle T^2 - \epsilon \rangle \cong \mathbb{F}_{169}$, where we interpret $T = \sqrt{\epsilon}$.
Compute
\[
P = \begin{pmatrix}
1 & 1 & 0 & 0 & 0\\
\sqrt{\epsilon}^{-1}& -\sqrt{\epsilon}^{-1} & 0 & 0 & 0\\
0 & 0 & 1& 0 &0\\
0 & 0 &  0& 1 & 1 \\
0 & 0 & 0 & \sqrt{\epsilon}^{-1}& -\sqrt{\epsilon}^{-1}\\
\end{pmatrix}
\]
and $P^{-1}$.
Compute $P^{(13)}$, which is the matrix obtained by taking the $13$-th power of each entry of $P$.
\begin{enumerate}
\item[{\rm (0)}]
We set $s_1:= 11$, and $( k_1, \ldots , k_{s_1} ):= ( 1, \ldots , 11)$ (we regard the $11$ coefficients $a_1$, $a_2$, $a_3$, $a_4$, $a_5$, $a_6$, $a_7$, $a_8$, $a_9$, $a_{10}$ and $a_{11}$ as indeterminates).
Let $\mathcal{A}_1:=\emptyset$.
\end{enumerate}
We proceed with the following five steps:
\begin{enumerate}
	\item[{\rm (1)}] Compute $F:= \sum_{i=1}^t a_i p_i + \sum_{j=1}^u b_j q_j$ and $h:= ( F )^{p-1}$ over $\mathbb{F}_{13} [a_1, \ldots, a_{11}] [x, y, z]$, where $a_1, \ldots, a_{11}$ are indeterminates and $b_1=1$.
	\item[{\rm (2)}] We set $s_2 := 6$, and $(i_1, \ldots , i_{s_2}) := ( 6, 7, 8, 9, 10, 11)$ (we regard the $6$ coefficients $a_6$, $a_7$, $a_8$, $a_9$, $a_{10}$ and $a_{11}$ as indeterminates).
	For the Gr\"{o}bner basis computation in the polynomial ring $\mathbb{F}_{13} [ a_1, a_2, a_3, a_4, a_5, a_6, a_7, a_8, a_9, a_{10}, a_{11}]$ in (5), we adopt the graded reverse lexicographic (grevlex) order with
\[
a_{11} \prec a_{10} \prec a_9 \prec a_8 \prec a_7 \prec a_6 \prec a_{5} \prec a_4 \prec a_3 \prec a_2 \prec a_1,
\]
	whereas for that in $\mathbb{F}_{13} [ x, y, z]$, the grevlex order with $z \prec y \prec x$ is adopted.
	Put $\mathcal{A}_2 := \{ (0,0), (1,0), (0,1), (1,1), (1, \zeta^{(13)}) \} \oplus (\mathbb{F}_{13})^{\oplus 3}$.
	\item[{\rm (3)}] Substitute $\frac{X+Y}{2}$ and $\frac{X-Y}{-2\sqrt{\epsilon}}$ into $x$ and $y$ in $h$.
	Let $h^{\prime}$ denote the polynomial transformed from $h$ by the above substitution.
	\item[{\rm (4)}] Let $H^{\prime}$ be the Hasse-Witt matrix with respect to \eqref{BasisForH'}, i.e., the $(i,j)$-entry of $H^{\prime}$ ($1 \leq i,j \leq 5$) is the coefficient of the monomial $B_i B_j^{-p}$ in $h^{\prime}$.
	\item[{\rm (5)}] Compute $H:=P^{(13)} H^{\prime} P^{-1}$.
	Let $\mathcal{S}$ be the set of the $25$ entries of $H$.
	Note that $\mathcal{S} \subset \mathbb{F}_{13} [a_1, \ldots, a_{11}]$.
	For each $( c_1, c_2, c_3, c_4, c_5 ) \in \mathcal{A}_2$, conduct procedures similar to the proof of Proposition \ref{prop:Case1q7}.
	\end{enumerate}
From the outputs, we have that there does not exist any quintic form $F$ of the form \eqref{eq:quintic4q13} such that the desingularization of $V (F) \subset \mathbf{P}^2$ is a superspecial trigonal curve of genus $5$. 
\end{proof}

\subsubsection{Non-split node case (3) with $q = p=13$}

\begin{prop}\label{prop:Case4-2q13}
Consider the quintic form
\begin{equation}
\begin{split}
F  = &  (x^2 - \epsilon y^2 ) z^3 + a_6 x^5 + a_7 x^4 y + a_8 x^3 y^2 + a_9 x^2 y^3 + a_{10} x y^4 + a_{11} y^5,
\end{split}\label{eq:quintic4-2q13}
\end{equation}
where $a_i \in \mathbb{F}_{13}$ for $6 \leq i \leq 11$.
Then there does not exist any quintic form $F$ of the form \eqref{eq:quintic4-2q13} such that the desingularization of $V (F) \subset \mathbf{P}^2$ is a superspecial trigonal curve of genus $5$. 
\end{prop}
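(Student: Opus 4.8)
The plan is to invoke Enumeration Algorithm 2 from Subsection~\ref{subsec:algorithm} in the degenerate case where the only non-trivial part of $F$, beyond the term $(x^2 - \epsilon y^2) z^3$, is the binary quintic $g_5(x,y) := a_6 x^5 + a_7 x^4 y + a_8 x^3 y^2 + a_9 x^2 y^3 + a_{10} x y^4 + a_{11} y^5$. Concretely, I would put $t = 6$, $u = 1$,
\[
\{ p_1, \ldots, p_6 \} = \{ x^5,\ x^4 y,\ x^3 y^2,\ x^2 y^3,\ x y^4,\ y^5 \}, \qquad \{ q_1 \} = \{ (x^2 - \epsilon y^2) z^3 \},
\]
with $b_1 = 1$, so that the indeterminates $a_1, \ldots, a_6$ are exactly the coefficients $a_6, \ldots, a_{11}$ of \eqref{eq:quintic4-2q13}. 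Since only six coefficients are unknown, the hybrid layer is light: one may take $s_1 = 6$, $(k_1,\ldots,k_6) = (1,\ldots,6)$, $\mathcal{A}_1 = \emptyset$, and either treat all six as Gr\"{o}bner variables or, to keep memory under control, brute-force one or two of them (i.e.\ choose $s_2 \in \{4,5\}$ and $\mathcal{A}_2 = (\mathbb{F}_{13})^{\oplus (6 - s_2)}$), adopting the grevlex order on $\mathbb{F}_{13}[a_1,\ldots,a_6]$ and on $\mathbb{F}_{13}[x,y,z]$ as in the other cases.

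Next I would construct $K' = \mathbb{F}_{13}[T]/\langle T^2 - \epsilon\rangle \cong \mathbb{F}_{169}$ with $T = \sqrt{\epsilon}$, form the matrices $P$, $P^{-1}$ and $P^{(13)}$ as in the discussion around \eqref{CCM}, compute $h := F^{p-1} = F^{12}$ over $\mathbb{F}_{13}[a_1,\ldots,a_6][x,y,z]$, substitute $x \mapsto (X+Y)/2$ and $y \mapsto (X - Y)/(-2\sqrt{\epsilon})$ to obtain $h'$, read off the Hasse-Witt matrix $H'$ with respect to the basis \eqref{BasisForH'} (its $(i,j)$-entry being the coefficient of $B_i B_j^{-p}$ in $h'$), and form $H := P^{(13)} H' P^{-1}$. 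By Corollary~\ref{Criterion_ssp_non-split} together with the construction of the non-split node case in Subsection~\ref{subsec:algorithm}, the desingularization of $V(F)$ is superspecial precisely when all $25$ entries of $H$ vanish. I would therefore solve that polynomial system over $\mathbb{F}_{13}$ via a Gr\"{o}bner basis computation (e.g.\ the algorithm of \cite[Subsection~2.3]{KHS17}), enumerate its finitely many $\mathbb{F}_{13}$-points, and for each one substitute back into $F$ and test, by the Jacobian criterion, whether $V(F)$ has exactly one singular point, of multiplicity $2$ — this is step (c) of Enumeration Algorithm 2.

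Because the ambient family here is so small, I expect the Gr\"{o}bner basis step itself to be entirely routine; the substantive part of the argument — and the main obstacle to writing it out cleanly — is the bookkeeping at the end. One must certify that every $\mathbb{F}_{13}$-solution of the Hasse-Witt system either makes $F$ reducible over $\overline{\mathbb{F}_{13}}$, or forces the binary quintic $g_5(x,y)$ to have a repeated root (so that $V(F)$ acquires a singularity away from $(0:0:1)$ and, by Lemma~\ref{CharacterizationTrigonal}, its desingularization has geometric genus $<5$). Running exactly this certification inside step (c) and recording that the resulting list $\mathcal{F}$ is empty yields the assertion of the proposition.
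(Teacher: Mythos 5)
Your proposal is correct and follows essentially the same route as the paper: the paper also runs Enumeration Algorithm 2 with $t=6$, $u=1$, the same monomials $p_i$ and $q_1=(x^2-\epsilon y^2)z^3$, builds $K'=\mathbb{F}_{13}[T]/\langle T^2-\epsilon\rangle$ and $H=P^{(13)}H'P^{-1}$, and uses the hybrid Gr\"obner/brute-force step (choosing $s_2=5$ with $\mathcal{A}_2=\mathbb{F}_{13}$, i.e.\ brute-forcing only $a_6$, which falls within your suggested range) followed by the singularity check in step (c). The only cosmetic difference is your closing remark making explicit what "the output list $\mathcal{F}$ is empty" certifies; the paper simply reports the empty output.
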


\begin{proof}
Put $t = 6$, $u = 1$ and
\begin{eqnarray}
\{ p_1, \ldots , p_t \} & = & \{ x^5, x^4 y, x^3 y^2, x^2 y^3, x y^4, y^5 \}, \nonumber \\
\{ q_1, \ldots , q_u \} & = & \{ (x^2 - \epsilon y^2 ) z^3 \}. \nonumber 
\end{eqnarray}
Execute Enumeration Algorithm 2 given in Subsection \ref{subsec:algorithm}.
We here give an outline of our computation together with our choices of $s_1$, $s_2$, $\{ k_1, \ldots , k_{s_1} \}$, $\{ i_1, \ldots , i_{s_2} \}$, $\mathcal{A}_1$, $\mathcal{A}_2$ and a term ordering in the algorithm.

First construct the quadratic extension field $K^{\prime}:=\mathbb{F}_{13} [T] / \langle T^2 - \epsilon \rangle \cong \mathbb{F}_{169}$, where we interpret $T = \sqrt{\epsilon}$.
Compute
\[
P = \begin{pmatrix}
1 & 1 & 0 & 0 & 0\\
\sqrt{\epsilon}^{-1}& -\sqrt{\epsilon}^{-1} & 0 & 0 & 0\\
0 & 0 & 1& 0 &0\\
0 & 0 &  0& 1 & 1 \\
0 & 0 & 0 & \sqrt{\epsilon}^{-1}& -\sqrt{\epsilon}^{-1}\\
\end{pmatrix}
\]
and $P^{-1}$.
Compute $P^{(13)}$, which is the matrix obtained by taking the $13$-th power of each entry of $P$.
\begin{enumerate}
\item[{\rm (0)}]
We set $s_1:= 6$, and $( k_1, \ldots , k_{s_1} ):= ( 6, \ldots , 11)$ (we regard the $6$ coefficients $a_6$, $a_7$, $a_8$, $a_9$, $a_{10}$ and $a_{11}$ as indeterminates).
Let $\mathcal{A}_1:=\emptyset$.
\end{enumerate}
We proceed with the following five steps:
\begin{enumerate}
	\item[{\rm (1)}] Compute $F:= \sum_{i=1}^t a_i p_i + \sum_{j=1}^u b_j q_j$ and $h:= ( F )^{p-1}$ over $\mathbb{F}_{13} [a_6, \ldots, a_{11}] [x, y, z]$, where $a_6, \ldots, a_{11}$ are indeterminates and $b_1=1$.
	\item[{\rm (2)}] We set $s_2 := 5$, and $(i_1, \ldots , i_{s_2}) := ( 7, 8, 9, 10, 11)$ (we regard the $5$ coefficients $a_7$, $a_8$, $a_9$, $a_{10}$ and $a_{11}$ as indeterminates).
	For the Gr\"{o}bner basis computation in the polynomial ring $\mathbb{F}_{13} [ a_6, a_7, a_8, a_9, a_{10}, a_{11}]$ in (5), we adopt the graded reverse lexicographic (grevlex) order with
\[
a_{11} \prec a_{10} \prec a_9 \prec a_8 \prec a_7 \prec a_6 ,
\]
	whereas for that in $\mathbb{F}_{13} [ x, y, z]$, the grevlex order with $z \prec y \prec x$ is adopted.
	Put $\mathcal{A}_2 := \mathbb{F}_{13}$.
	\item[{\rm (3)}] Substitute $\frac{X+Y}{2}$ and $\frac{X-Y}{-2\sqrt{\epsilon}}$ into $x$ and $y$ in $h$.
	Let $h^{\prime}$ denote the polynomial transformed from $h$ by the above substitution.
	\item[{\rm (4)}] Let $H^{\prime}$ be the Hasse-Witt matrix with respect to \eqref{BasisForH'}, i.e., the $(i,j)$-entry of $H^{\prime}$ ($1 \leq i,j \leq 5$) is the coefficient of the monomial $B_i B_j^{-p}$ in $h^{\prime}$.
	\item[{\rm (5)}] Compute $H:=P^{(13)} H^{\prime} P^{-1}$.
	Let $\mathcal{S}$ be the set of the $25$ entries of $H$.
	Note that $\mathcal{S} \subset \mathbb{F}_{13} [a_6, \ldots, a_{11}]$.
	For each $c_6  \in \mathcal{A}_2$, conduct procedures similar to the proof of Proposition \ref{prop:Case1q7}.
	\end{enumerate}
From the outputs, we have that there does not exist any quintic form $F$ of the form \eqref{eq:quintic4-2q13} such that the desingularization of $V (F) \subset \mathbf{P}^2$ is a superspecial trigonal curve of genus $5$. 
\end{proof}

\subsubsection{Cusp case with $q = p=13$}

\begin{prop}\label{prop:Case5q13}
Consider the quintic form
\begin{equation}
\begin{split}
F  = &  x^2 z^3 + a_1 y^3 z^2 + (a_2 x^4 + a_3 x^3 y + a_4 x^2 y^2 + b_1 x y^3 + a_5 y^4) z \\
& + a_6 x^5 + a_7 x^4 y + a_8 x^3 y^2 + a_9 x^2 y^3 + b_2 x y^4 + a_{10} y^5,
\end{split}\label{eq:quintic5q13}
\end{equation}
where $a_i \in \mathbb{F}_{13}$ for $1 \leq i \leq 10$ with $a_1 \neq 0$ and $(b_1, b_2) \in \{ (0,0), (1,0), (0,1), (1,1) \}$.
Then there does not exist any quintic form $F$ of the form \eqref{eq:quintic5q13} such that the desingularization of $V (F) \subset \mathbf{P}^2$ is a superspecial trigonal curve of genus $5$. 
\end{prop}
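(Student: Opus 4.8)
The plan is to prove Proposition~\ref{prop:Case5q13} computationally, following verbatim the cusp-case arguments already carried out for $q=49$ and $q=11$ in Propositions~\ref{prop:Case5q7} and~\ref{prop:Case5q11}: run Enumeration Algorithm~1 over $K=\mathbb{F}_{13}$ with the superspeciality test of Corollary~\ref{Criterion_ssp_non-split}. First I would set $t=10$, $u=3$, and
\[
\{p_1,\ldots,p_t\}=\{y^3z^2,\ x^4z,\ x^3yz,\ x^2y^2z,\ y^4z,\ x^5,\ x^4y,\ x^3y^2,\ x^2y^3,\ y^5\},\qquad
\{q_1,\ldots,q_u\}=\{xy^3z,\ xy^4,\ x^2z^3\},
\]
so that $F=\sum_{i=1}^{10}a_ip_i+b_1q_1+b_2q_2+q_3$ ranges exactly over the forms~\eqref{eq:quintic5q13} with the $x^2z^3$-coefficient fixed to $1$. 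By Lemma~\ref{CharacterizationTrigonal}(2) the desingularization of $V(F)$ is a trigonal curve of genus $5$ precisely when $V(F)$ has a unique singular point, and by Corollary~\ref{Criterion_ssp_non-split} it is superspecial precisely when the $25$ designated coefficients of $F^{p-1}=F^{12}$ vanish.

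Then, for each of the four choices $(b_1,b_2)\in\{(0,0),(1,0),(0,1),(1,1)\}$, I would execute the algorithm: in step~(0) take $s_1=10$ and $(k_1,\ldots,k_{10})=(1,\ldots,10)$ with $\mathcal{A}_1=\emptyset$, so all of $a_1,\ldots,a_{10}$ are treated as indeterminates; in step~(1) compute $h=F^{12}$ over $\mathbb{F}_{13}[a_1,\ldots,a_{10}][x,y,z]$; in step~(2) split off a suitable block of brute-force coefficients — I expect a choice like $s_2=7$ with $(i_1,\ldots,i_7)=(2,4,6,7,8,9,10)$ and $\mathcal{A}_2=\mathbb{F}_{13}^{\times}\oplus(\mathbb{F}_{13})^{\oplus 2}$ (so $a_1$ runs over $\mathbb{F}_{13}^{\times}$ because $a_1\neq 0$, and two further coefficients over $\mathbb{F}_{13}$), working with the grevlex order $a_{10}\prec\cdots\prec a_1$ on the coefficient ring and grevlex $z\prec y\prec x$ on $x,y,z$; in step~(3) form the set $\mathcal{S}$ of the $25$ monomial coefficients of $h$ prescribed by Corollary~\ref{Criterion_ssp_non-split}, substitute each brute-forced tuple, adjoin the linear relations $a_i-c_i$, solve the resulting zero-dimensional system by a Gr\"obner-basis method, and for every root substitute back into $F$ and decide whether $V(F)$ has exactly one singular point. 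The enumeration returns the empty list, and by the correctness statement of Enumeration Algorithm~1 this yields the claim.

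I expect the main obstacle to be purely a matter of computational feasibility rather than of mathematical content: over $\mathbb{F}_{13}$ the twelfth power $F^{12}$ is bulky, and solving the parametric ideal in ten unknowns without brute force is out of reach, so the size and choice of $\mathcal{A}_2$ (and, if necessary, moving some coefficients into $\mathcal{A}_1$) must be tuned so that each Gr\"obner-basis computation fits in memory; as already noted for the $q=13$ split-node case, it may be necessary to split the run into separate programs for the four values of $(b_1,b_2)$ to avoid out-of-memory errors. Correctness of the conclusion is guaranteed by Corollary~\ref{Criterion_ssp_non-split}; the work lies entirely in selecting hybrid-method parameters that let the search terminate in practice.
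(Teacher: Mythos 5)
Your proposal is correct and follows essentially the same route as the paper: Enumeration Algorithm~1 for the cusp case over $\mathbb{F}_{13}$ with the same sets $\{p_i\}$, $\{q_j\}$, the superspeciality test of Corollary~\ref{Criterion_ssp_non-split}, and a hybrid brute-force/Gr\"obner solve for each $(b_1,b_2)$. The only difference is in the tuning you anticipated: the paper moves $a_1$ into the first-level brute force ($s_1=9$, $\mathcal{A}_1=\mathbb{F}_{13}^{\times}$, so $F^{12}$ is recomputed for each $c_1$) and at the second level keeps $(a_2,a_3,a_4,a_6,a_7,a_8,a_9)$ symbolic while brute-forcing $(c_5,c_{10})\in(\mathbb{F}_{13})^{\oplus 2}$, rather than your $(c_1,c_3,c_5)$ split.
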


\begin{proof}
Put $t = 10$, $u = 3$ and
\begin{eqnarray}
\{ p_1, \ldots , p_t \} & = & \{ y^3 z^2, x^4 z, x^3 y z, x^2 y^2 z, y^4 z,  x^5, x^4 y, x^3 y^2, x^2 y^3, y^5 \}, \nonumber \\
\{ q_1, \ldots , q_u \} & = & \{ x y^3 z, x y^4,  x^2 z^3 \}. \nonumber 
\end{eqnarray}
For each $(b_1, b_2) \in \{ (0,0), (1,0), (0,1), (1,1) \}$, execute Enumeration Algorithm 1 given in Subsection \ref{subsec:algorithm}.
We here give an outline of our computation together with our choices of $s_1$, $s_2$, $\{ k_1, \ldots , k_{s_1} \}$, $\{ i_1, \ldots , i_{s_2} \}$, $\mathcal{A}_1$, $\mathcal{A}_2$ and a term ordering in the algorithm.
\begin{enumerate}
\item[{\rm (0)}]
We set $s_1:= 9$, and $( k_1, \ldots , k_{s_1} ):= ( 2, \ldots , 10)$ (we regard the $9$ coefficients $a_2$, $a_3$, $a_4$, $a_5$, $a_6$, $a_7$, $a_8$, $a_9$ and $a_{10}$ as indeterminates).
Let $\mathcal{A}_1:=\mathbb{F}_{13}^{\times}$.
\end{enumerate}
For each $c_1 \in \mathcal{A}_1$, we proceed with the following three steps:
\begin{enumerate}
	\item[{\rm (1)}] Substitute $c_1$ into $a_1$ in $F$, and compute $F:= \sum_{i=1}^t a_i p_i + \sum_{j=1}^u b_j q_j$ and $h:= ( F )^{p-1}$ over $\mathbb{F}_{13} [a_2, \ldots, a_{10}] [x, y, z]$, where $a_2, \ldots, a_{10}$ are indeterminates and $b_3=1$.
	\item[{\rm (2)}] We set $s_2 := 7$, and $(i_1, \ldots , i_{s_2}) := ( 2, 3, 4, 6, 7, 8, 9)$ (we regard the $7$ coefficients $a_2$, $a_3$, $a_4$, $a_6$, $a_7$, $a_8$ and $a_9$ as indeterminates).
	For the Gr\"{o}bner basis computation in the polynomial ring $\mathbb{F}_{13} [a_2, a_3, a_4, a_5, a_6, a_7, a_8, a_9, a_{10}]$ in (3), we adopt the graded reverse lexicographic (grevlex) order with
\[
a_{10} \prec a_9 \prec a_8 \prec a_7 \prec a_6 \prec a_5 \prec a_4 \prec a_3 \prec a_2,
\]
	whereas for that in $\mathbb{F}_{13} [ x, y, z]$, the grevlex order with $z \prec y \prec x$ is adopted.
	Put $\mathcal{A}_2 := (\mathbb{F}_{13})^{\oplus 2}$.
	\item[{\rm (3)}] Let $\mathcal{S} \subset \mathbb{F}_{13} [a_2, \ldots, a_{10}]$ be the set of the coefficients of the $25$ monomials in $h$, given in Corollary \ref{Criterion_ssp_non-split}.
	For each $( c_5, c_{10} ) \in \mathcal{A}_2$, conduct procedures similar to the proof of Proposition \ref{prop:Case1q7}.
\end{enumerate}
From the outputs, we have that there does not exist any quintic form $F$ of the form \eqref{eq:quintic5q13} such that the desingularization of $V (F) \subset \mathbf{P}^2$ is a superspecial trigonal curve of genus $5$.
\end{proof}

\subsection{Remarks on our implementation to prove main theorems}\label{subsec:imple}
We implemented and executed computations in the proofs of Propositions \ref{prop:Case1q7} -- \ref{prop:Case5q13}, including Enumeration Algorithms 1 and 2 given in Subsection \ref{subsec:algorithm}, over Magma V2.22-7 \cite{Magma}, \cite{MagmaHP}.
All our computations were conducted on a computer with ubuntu 16.04 LTS OS at 3.40 GHz CPU (Intel Core i7-6700) and 15.6 GB memory.
All the source codes and the log files are available at the web page of the first author \cite{HPkudo}.
All the computations for our enumeration were terminated in about 4.4 days in total.
In our implementations, we computed Gr\"{o}bner bases with the Magma function \texttt{GroebnerBasis}, which adopts the $F_4$ algorithm \cite{F4}.
\if 0
OSF ubuntu 16.04 LTS
ƒƒ'ƒŠF 15.6GiB
CPU: Intel Core i7-6700 CPU 3.40GHz \times 8
----
Magma V2.22-7     Tue Oct 10 2017 18:24:55 on harashita-XPS-8910 [Seed = 
3949787303]
Type ? for help.  Type <Ctrl>-D to quit.
\fi


\section{Automorphism groups}\label{sec:aut}

This section studies the isomorphisms and the automorphism groups of superspecial trigonal curves of genus $5$, and enumerate superspecial trigonal curves over $\mathbb{F}_{11}$ by Galois cohomology theory.

\subsection{Isomorphisms}

\begin{prop}\label{prop:isom}
\begin{enumerate}
\item[${\rm (I)}$] There are precisely $4$ $\mathbb{F}_{11}$-isomorphism classes of superspecial trigonal curves of genus $5$ over $\mathbb{F}_{11}$.
The four isomorphism classes are the desingularizations of the curves $C_i=V (F_i) \subset \mathbf{P}^2$ given by
\begin{eqnarray}
F_1 & = & x y z^3 + x^5 + y^5 , \nonumber \\
F_2 & = & x y z^3 + 2 x^5 + y^5 , \nonumber \\
F_3 & = & x y z^3 + 3 x^5 + y^5 , \nonumber \\
F_4 & = & (x^2 - 2 y^2) z^3  + x^5 + 9 x^3 y^2 + 9 x y^4. \nonumber
\end{eqnarray}
\item[${\rm (I\hspace{-.1em}I)}$] There is a unique $\overline{\mathbb{F}_{11}}$-isomorphism class of superspecial trigonal curves of genus $5$ over $\mathbb{F}_{11}$.
The unique isomorphism class is the desingularization of the curve $C^{\rm (alc)} = V (F) \subset \mathbf{P}^2$ given by $F =  x y z^3 + x^5 + y^5$.
\end{enumerate}
\end{prop}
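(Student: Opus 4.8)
The plan is to combine Theorem~B with the description of automorphisms in Lemma~\ref{CharacterizationTrigonal}~(3), and then carry out an explicit orbit computation for each of the two families produced by Theorem~B. By Lemma~\ref{CharacterizationTrigonal}~(3), the desingularizations of two quintic models $V(F)$ and $V(F')$ are isomorphic over a field $k\supseteq\mathbb{F}_{11}$ exactly when $F'=\lambda\,(F\circ L)$ for some $\lambda\in k^{\times}$ and some invertible linear substitution $L$. Since all our models have their unique singular point at $(0:0:1)$, the map $L$ must fix that point, so $L$ acts on $x,y$ by some $\sigma\in\GL_{2}$ and on $z$ by $z\mapsto\gamma_{1}x+\gamma_{2}y+\delta z$. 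Because every $F$ appearing in Theorem~B has no monomials of $z$-degree $1$ or $2$, expanding $F\circ L$ shows that its $z^{2}$-terms form a nonzero multiple of $Q(\sigma(x,y))\,(\gamma_{1}x+\gamma_{2}y)\,z^{2}$, where $Q(x,y)$ denotes the tangent cone; this forces $\gamma_{1}=\gamma_{2}=0$, and matching tangent cones then forces $\sigma\in\GO(Q)(\mathbb{F}_{11})$ when $k=\mathbb{F}_{11}$. Thus the problem reduces to computing the orbits of $\GO(Q)$, together with the scalings coming from $\lambda$ and $\delta$, on the quintic part of $F$.

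For the split-node family $F=xyz^{3}+a_{1}x^{5}+a_{2}y^{5}$ (with $a_{1},a_{2}\in\mathbb{F}_{11}^{\times}$) one has $Q=xy$, so $\GO(Q)(\mathbb{F}_{11})$ is generated by the diagonal torus $(x,y)\mapsto(\alpha x,\beta y)$ and the swap $(x,y)\mapsto(y,x)$. Tracking the induced action on $(a_{1},a_{2})$, after absorbing the scalings, shows that the orbit of $(a_{1},a_{2})$ is governed by the class of $c:=a_{2}/a_{1}\in\mathbb{F}_{11}^{\times}$ modulo the group generated by $c\mapsto c^{-1}$ (from the swap) and $c\mapsto-c$ (because the fifth powers in $\mathbb{F}_{11}^{\times}$ are exactly $\{1,-1\}$). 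The three resulting orbits $\{1,10\}$, $\{2,5,6,9\}$, $\{3,4,7,8\}$ correspond to $c=1$, $c=1/2$, $c=1/3$, that is, to $C_{1},C_{2},C_{3}$; and a direct check (as in Lemma~\ref{CharacterizationTrigonal}~(2)) confirms each $C_{i}$ has a single node and hence is a genuine trigonal curve of genus $5$.

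For the non-split-node family \eqref{sscurve_F11_2} the crucial step is to diagonalize the tangent cone over $\mathbb{F}_{121}$ by setting $X=x-\sqrt{2}\,y$, $Y=x+\sqrt{2}\,y$; then $x^{2}-2y^{2}=XY$, and the quintic part of \eqref{sscurve_F11_2} becomes exactly $-5\bigl(\alpha X^{5}+\bar{\alpha}Y^{5}\bigr)$ with $\alpha=a+b\sqrt{2}\in\mathbb{F}_{121}^{\times}$ and $\bar{\alpha}$ its Galois conjugate. The torus $\tilde{\mathrm{C}}\cong\mathbb{F}_{121}^{\times}$ then acts by $(X,Y)\mapsto(cX,\bar{c}Y)$, so after absorbing a scalar $\lambda\in\mathbb{F}_{11}^{\times}$ and a rescaling of $z$ the induced action on the parameter is $\alpha\mapsto\lambda c^{5}\alpha$. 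In the cyclic group $\mathbb{F}_{121}^{\times}$ of order $120$ the subgroups $\mathbb{F}_{11}^{\times}$ (order $10$) and $(\mathbb{F}_{121}^{\times})^{5}$ (order $24$) intersect in the subgroup of order $2$, so their product is all of $\mathbb{F}_{121}^{\times}$; hence every $(a,b)\neq(0,0)$ gives an $\mathbb{F}_{11}$-isomorphic curve, represented by $C_{4}$. Finally, a split node and a non-split node are never interchanged by an $\mathbb{F}_{11}$-isomorphism (the branches of the former are $\mathbb{F}_{11}$-rational, those of the latter conjugate over $\mathbb{F}_{121}$), so the two families are disjoint over $\mathbb{F}_{11}$; together with Theorem~B this yields precisely the four classes of~(I).

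For~(II), over $\overline{\mathbb{F}_{11}}$ every element of $\mathbb{F}_{11}^{\times}$ is a fifth power, so within the first family $c$ can be scaled to $1$ and $C_{1},C_{2},C_{3}$ become $\overline{\mathbb{F}_{11}}$-isomorphic. For $C_{4}$, the $\mathbb{F}_{121}$-linear change $(x,y,z)\mapsto(x-\sqrt{2}\,y,\;x+\sqrt{2}\,y,\;z)$, combined with the diagonalization above, identifies $V(F_{4})$ with $V(XYz^{3}-5X^{5}-5Y^{5})$, which after a rescaling of $X,Y,z$ (possible since every element of $\mathbb{F}_{11}^{\times}$ is a cube in $\mathbb{F}_{121}^{\times}$) becomes $V(XYz^{3}+X^{5}+Y^{5})$; thus $C_{4}\cong C_{1}$ already over $\mathbb{F}_{121}$. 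Hence all four $\mathbb{F}_{11}$-classes coalesce into the single $\overline{\mathbb{F}_{11}}$-class of $C^{\rm(alc)}$, which proves~(II). I expect the main obstacle to be the diagonalization identity for the second family --- verifying that $a(x^{5}+9x^{3}y^{2}+9xy^{4})+b(x^{4}y+4x^{2}y^{3}+3y^{5})$ collapses to $-5(\alpha X^{5}+\bar{\alpha}Y^{5})$ --- together with the accompanying arithmetic in $\mathbb{F}_{11}$ and $\mathbb{F}_{121}$; granting those, the remainder is bookkeeping with $\PGL_{3}$-actions.
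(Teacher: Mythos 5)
Your argument is correct, but it takes a genuinely different route from the paper's. The paper proves Proposition \ref{prop:isom} computationally: for each pair $(F,F')$ of candidate forms from Theorem \ref{MainTheorem} it writes down an indeterminate matrix $M$ fixing $(0:0:1)$, forms the multivariate system coming from $M\cdot F=\lambda F'$ and the invertibility condition, and decides solvability over $\mathbb{F}_{11}$ (resp.\ $\overline{\mathbb{F}_{11}}$) by a Gr\"obner basis computation in Magma, with the actual verification delegated to the posted code and log files. You instead exploit the structure of the candidate forms: since they have no $z^2$- or $z$-terms, any admissible $M$ must have $\gamma_1=\gamma_2=0$ and its $2\times2$ block must preserve the tangent cone $Q$ up to scalar, so the whole question reduces to orbit computations for $\GO(Q)$ together with the scalings. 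I checked the key steps: the fifth-power image of $\mathbb{F}_{11}^\times$ is $\{\pm1\}$, so the split family is classified by $c=a_2/a_1$ modulo $c\mapsto\pm c^{\pm1}$, whose orbits $\{1,10\}$, $\{2,5,6,9\}$, $\{3,4,7,8\}$ do contain $1$, $1/2=6$, $1/3=4$ respectively; the identity $a x^5+b x^4y+9a x^3y^2+4b x^2y^3+9a xy^4+3b y^5=-5(\alpha X^5+\bar{\alpha}Y^5)$ with $\alpha=a+b\sqrt{2}$ holds modulo $11$; and in the cyclic group $\mathbb{F}_{121}^\times$ of order $120$ the product of the subgroups of orders $10$ and $24$ is everything, giving transitivity on the non-split family. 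Your approach yields a human-verifiable proof and explains structurally why the count is $3+1$; the paper's approach is uniform and mechanical but opaque, resting on machine computation. The two agree, and your subgroup-index bookkeeping is consistent with the paper's independent Galois-cohomology cross-check in Section \ref{sec:aut} (four $\sigma$-conjugacy classes with stabilizers of orders $10,5,5,2$).
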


\begin{proof}
\begin{enumerate}
\item[(I)]
By Theorem \ref{MainTheorem}, any superspecial trigonal curve of genus $5$ over $\F_{11}$ is $\F_{11}$-isomorphic to the desingularization of the quintic in $\mathbf{P}^2$ defined by
\begin{equation}
x y z^3 + a_1 x^5 + a_2 y^5  = 0 \label{sscurve_F11_g5}
\end{equation}
for some $a_1, a_2 \in \mathbb{F}_{11}^\times$, or
\begin{equation}
 (x^2 - \epsilon y^2) z^3 + a x^5 + b x^4 y + (9 a) x^3 y^2 + (4 b) x^2 y^3 + ( 9 a ) x y^4 + (3 b) y^5 =0 \label{sscurve_F11_2_g5}
\end{equation}
for some $(a, b) \in (\mathbb{F}_{11})^{\oplus 2} \smallsetminus \{ (0,0 )\}$.
Let $\mathcal{F}_0$ be the set of quintic forms of the forms \eqref{sscurve_F11_g5} and \eqref{sscurve_F11_2_g5}.
Here we set $K:=\mathbb{F}_{11}$.
Recall from Lemma \ref{CharacterizationTrigonal} (3) that $V(F) \cong V (F^{\prime})$ over $K$ for $F, F^{\prime} \in \mathcal{F}_0$ if and only if their desingularizations are isomorphic to each other over $K$.
Thus, it suffices to compute a set $\mathcal{F} \subset \mathcal{F}_0$ such that for each $F, F^{\prime} \in \mathcal{F}$ with $F \neq F^{\prime}$, the two curves $V (F)$ and $V(F^{\prime})$ are not isomorphic to each other over $K$.
More specifically, $V ( F )$ and $V(F^{\prime})$ are not isomorphic to each other over $K$ if and only if there dose not exist any element $\mu \in \mathrm{Aut}_K ( \mathbf{P}^2 )$ such that $\mu ( V ( F ) ) = V ( F^{\prime} )$.
Here, an element $\mu \in \mathrm{Aut}_K ( \mathbf{P}^2 )$ with $\mu ( V ( F ) ) = V ( F^{\prime} )$ is given as a matrix $M \in \mathrm{GL}_3 ( K )$ such that $M \cdot F = \lambda F^{\prime}$ for some $\lambda \in K^{\times}$.
Since the action of such an $M$ stabilizes the singular point $(0:0:1)$ on $V (F)$ and $V(F^{\prime})$, its $(1,3)$, $(2,3)$ and $(3,3)$ entries are $0$, $0$ and $1$ respectively.
Thus, executing the following procedures (a) -- (c) for each pair $( F, F^{\prime} )$ of elements in $\mathcal{F}_0$ with $F \neq F^{\prime}$, we obtain such a set $\mathcal{F}$:
\begin{enumerate}
\item We set
\[
M:=
\begin{pmatrix}
a&b&0\\
c&d&0\\
e&f&1
\end{pmatrix},
\]
where $a$, $b$, $c$, $d$, $e$ and $f$ are indeterminates.
\item Computing $F^{\prime \prime} := M \cdot F - \lambda F^{\prime}$, construct a multivariate system over $K$ derived from the equation $F^{\prime \prime}=0$ together with $\lambda \cdot g \cdot \mathrm{det}(M) = \lambda g ( a d - b c ) = 1$.
Here $\lambda$ and $g$ are extra indeterminates.
\item Decide whether the system constructed in (b) has a root over $K$ or not.
If the system has a root over $K$, the two curves $V (F)$ and $V (F^{\prime})$ are isomorphic to each other over $K$.
Otherwise $V (F)$ and $V (F^{\prime})$ are not isomorphic to each other over $K$.
\end{enumerate}
To proceed with the above procedures (a) -- (c), for example use a Gr\"{o}bner basis (see the web page of the first author \cite{HPkudo}, for a code by Magma).
The set $\mathcal{F}$ computed by our implementation over Magma consists of $F_i$ for $1 \leq i \leq 4$ in the statement.
\item[(I\hspace{-.1em}I)] This is proved in a way similar to (I).
Specifically, replacing $K$ by $\overline{\mathbb{F}_{11}}$, execute the same procedures as in (I) for $\mathcal{F}_0 := \{ F_i : 1 \leq i \leq 4 \}$.
\end{enumerate}
\end{proof}

\begin{rem}
The number of $\mathbb{F}_{121}$-rational points on each curve given in Proposition \ref{prop:isom} (1) is $\# C_1= 232$, $\# C_2 = 122$, $\# C_3 = 122$, $\# C_4 = 232$.
Hence $C_1$ and $C_4$ are maximal curves over $\mathbb{F}_{121}$.
\end{rem}

\subsection{Automorphism}

\begin{prop}\label{prop:Aut}
\begin{enumerate}
\item[${\rm (I)}$] Let $C_i = V ( F_i )$ denote the superspecial trigonal curve of genus $5$ over $\mathbb{F}_{11}$ defined by $F_i$ for each $1 \leq i \leq 4$.
Here each $F_i$ is given in {\rm Proposition} $\ref{prop:isom}$ {\rm (I)}.
Then we have the following isomorphisms:

\begin{tabular}{clcl}
$(1)$ & $\Aut_{\mathbb{F}_{11}} ( C_1 ) \cong \mathrm{D}_5$,  & 
$(2)$ & $\Aut_{\mathbb{F}_{11}} ( C_2 ) \cong \mathrm{C}_5$, \\[2mm]
$(3)$ & $\Aut_{\mathbb{F}_{11}} ( C_3 ) \cong \mathrm{C}_5$, &
$(4)$ & $\Aut_{\mathbb{F}_{11}} ( C_4 ) \cong \mathrm{C}_2$,
\end{tabular}

\noindent where ${\rm D}_t$ and $\mathrm{C}_t$ denote the dihedral group and the cyclic group of degree $t$ for each $t$.

\item[${\rm (I\hspace{-.1em}I)}$] Let $C^{\rm (alc)} = V ( F )$ denote the $\overline{\mathbb{F}_{11}}$-isomorphism class of superspecial trigonal curves of genus $5$ over $\mathbb{F}_{11}$ defined by $F$.
Here $F$ is given in {\rm Proposition} $\ref{prop:isom}$ {\rm (I\hspace{-.1em}I)}.
Then we have an isomorphism $\Aut ( C^{\rm (alc)} ) \cong \mathrm{C}_3 \times \mathrm{D}_5$.
\end{enumerate}
\end{prop}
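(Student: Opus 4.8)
The plan is to invoke Lemma \ref{CharacterizationTrigonal} (3), which identifies $\Aut_K(C_i)$ with the subgroup of $\Aut_K(\mathbf{P}^2) = \PGL_3(K)$ that stabilizes the associated quintic $C_i' = V(F_i)$. Since each $C_i'$ has $(0:0:1)$ as its unique singular point, every such projective automorphism must fix $(0:0:1)$, hence (after normalizing the $(3,3)$-entry to $1$) is represented by a matrix of the block form
\[
M = \begin{pmatrix} a & b & 0 \\ c & d & 0 \\ e & f & 1 \end{pmatrix}, \qquad ad - bc \neq 0 .
\]
So an automorphism corresponds to a solution of $M \cdot F_i = \lambda F_i$ for some $\lambda \in K^\times$. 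First I would, for each $F_i$, write this identity as a multivariate polynomial system in the entries $a,\dots,f$ together with auxiliary variables encoding $\lambda$ and the non-degeneracy $\det M \neq 0$, and compute its full solution set over $K = \F_{11}$ by a Gr\"obner basis computation, exactly as in the proof of Proposition \ref{prop:isom}. This produces the finite list of matrices realizing $\Aut_{\F_{11}}(C_i)$.

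Next I would identify the abstract group structure of each stabilizer by exhibiting explicit generators among the output and checking orders and relations. For $C_1 = V(xyz^3 + x^5 + y^5)$ the diagonal maps $(x,y,z)\mapsto(\alpha x,\alpha^{-1}y,z)$ with $\alpha^5 = 1$ form a cyclic subgroup $\mathrm{C}_5$ (there are five fifth roots of unity in $\F_{11}$), and the involution $(x,y,z)\mapsto(y,x,z)$ preserves $F_1$ and conjugates the generator of $\mathrm{C}_5$ to its inverse, so $\Aut_{\F_{11}}(C_1) \cong \mathrm{C}_5 \rtimes \mathrm{C}_2 = \mathrm{D}_5$; the Gr\"obner computation shows there is nothing more. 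For $C_2$ and $C_3$ the swap no longer preserves the equation (the $x^5$- and $y^5$-coefficients differ), so only the $\mathrm{C}_5$ of diagonal scalings survives, and completeness again comes from the computation. For $C_4 = V((x^2 - 2y^2)z^3 + x^5 + 9x^3y^2 + 9xy^4)$ the map $(x,y,z)\mapsto(x,-y,z)$ preserves $F_4$, giving an involution, and the computation shows $\Aut_{\F_{11}}(C_4) \cong \mathrm{C}_2$. The only subtlety at this stage is recognizing the isomorphism type, in particular distinguishing $\mathrm{D}_5$ from $\mathrm{C}_{10}$, which is settled by observing that the order-$2$ element does not commute with the order-$5$ element.

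For part (II) I would run the same computation over $\overline{\F_{11}}$, or equivalently over $\F_{121}$ (where $\Aut(C^{(\mathrm{alc})})$ is already defined: $\mathrm{D}_5$ is defined over $\F_{11}$, and the cube roots of unity lie in $\F_{121}$ since $11 \equiv 2 \pmod 3$). The only new phenomenon is that $\overline{\F_{11}}$ contains a primitive cube root of unity $\omega \notin \F_{11}$, so $(x,y,z)\mapsto(x,y,\omega z)$ becomes an automorphism of order $3$ of $C^{(\mathrm{alc})} = V(xyz^3+x^5+y^5)$; it commutes with the $\mathrm{D}_5$ found above, and the Gr\"obner basis computation over $\F_{121}$ confirms that these generate the whole group, giving $\Aut(C^{(\mathrm{alc})}) \cong \mathrm{C}_3 \times \mathrm{D}_5$.

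I expect the main obstacle to be the completeness step, i.e.\ proving that there are no automorphisms beyond the explicitly exhibited ones; this is exactly what solving the system $M \cdot F_i = \lambda F_i$ by Gr\"obner bases provides, and it is the part that genuinely relies on computation. A secondary point requiring care is reading off the abstract isomorphism type of each group (here $\mathrm{D}_5$, $\mathrm{C}_5$, $\mathrm{C}_2$, and $\mathrm{C}_3 \times \mathrm{D}_5$) from the explicit list of matrices.
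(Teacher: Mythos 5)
Your proposal is correct and follows essentially the same route as the paper: both reduce $\Aut_K(C_i)$ via Lemma \ref{CharacterizationTrigonal} (3) to the stabilizer of $V(F_i)$ in $\PGL_3(K)$, use the fact that the unique singular point $(0:0:1)$ is fixed to restrict to matrices with third column $(0,0,1)^{\mathrm T}$, solve $M\cdot F_i=\lambda F_i$ by a Gr\"obner basis computation for completeness, and then read off the group from explicit generators (the paper's generators $\mathrm{diag}(3,4,1)$, the $x\leftrightarrow y$ swap, and the scalar $\mathrm{diag}(\zeta^{80},\zeta^{80},1)$ of order $3$ over $\overline{\mathbb{F}_{11}}$ coincide, up to scalars, with the ones you exhibit). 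Your extra remarks --- the conjugation check distinguishing $\mathrm{D}_5$ from $\mathrm{C}_{10}$ and the observation that the cube roots of unity first appear in $\mathbb{F}_{121}$ --- are consistent with, and implicit in, the paper's computation.
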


\begin{proof}
\begin{enumerate}
\item[(I)]
We prove only the statement (1) since the other cases (2) -- (4) are proved in ways similar to this.
To simplify the notation, we set $F:=F_1$ and $C:=C_1=V(F)$.
Putting
\[
G_K := \{ M \in \mathrm{GL}_3 (K) : M \cdot F = \lambda F \mbox{ for some } \lambda \in K^{\times} \} 
\]
with $K := \mathbb{F}_{11}$, we have $\mathrm{Aut}_K(C) \cong  G_K / \sim $, where $M \sim c M$ for some $c \in K^{\times}$.
Note that the $(1,3)$, $(2,3)$ and $(3,3)$ entries of each element in $G_K$ are $0$, $0$, and $1$ respectively.
Determining the set $G_K$ is reduced into solving a multivariate system over $K$ as follows:
Here we set
\[
M:=
\begin{pmatrix}
a&b&0\\
c&d&0\\
e&f&1
\end{pmatrix},
\]
where $a$, $b$, $c$, $d$, $e$ and $f$ are indeterminates.
Now we have a multivariate system over $\mathbb{F}_{11}$ derived from the equation $M \cdot F = \lambda F$ together with $\lambda \cdot g \cdot \mathrm{det}(M) = \lambda g ( a d - b c ) = 1$, where $\lambda$ and $g$ are extra indeterminates.
To solve this, for example use a Gr\"{o}bner basis (see the web page of the first author \cite{HPkudo}, for a code by Magma).
As a result, we have that $G_K / \sim$ is generated by
\[
a:=
\begin{pmatrix}
0&1&0\\
1&0&0\\
0&0&1
\end{pmatrix}
\mbox{ and }
b:=
\begin{pmatrix}
3&0&0\\
0&4&0\\
0&0&1
\end{pmatrix},
\]
whose orders are $2$ and $5$, respectively.
Hence, the homomorphism defined by $a \mapsto (1, 5) (2, 4) $ and $b \mapsto (1, 2, 3, 4, 5)$ gives an isomorphism between $G_K / \sim$ and $\mathrm{D}_5$, where we identify $\mathrm{D}_5$ with the subgroup of ${\rm S}_5$ generated by the permutations $(1, 5) (2, 4) $ and $(1, 2, 3, 4, 5)$.
\item[(I\hspace{-.1em}I)] This is proved in a way similar to (I).
Specifically, we have that $G_K / \sim$ with $K = \overline{\mathbb{F}_{11}}$ is generated by
\[
a:=
\begin{pmatrix}
4&0&0\\
0&3&0\\
0&0&1
\end{pmatrix}, \quad
b:=
\begin{pmatrix}
0&4&0\\
3&0&0\\
0&0&1
\end{pmatrix}, \quad
\mbox{and }
c:=
\begin{pmatrix}
\zeta^{80}&0&0\\
0&\zeta^{80}&0\\
0&0&1
\end{pmatrix},
\]
whose orders are $5$, $2$ and $3$, respectively.
Here $\zeta$ is a root of $t^2 + 7 t + 2$, which is a primitive element of $\mathbb{F}_{121}$.
Hence, the homomorphism defined by $a \mapsto (4,5,6,7,8)$, $b \mapsto (4,8) (5,7)$ and $c \mapsto (1, 2, 3)$ gives an isomorphism between $G_K / \sim$ and $ \mathrm{C}_3 \times \mathrm{D}_5$, where we identify $ \mathrm{C}_3 \times \mathrm{D}_5$ with the subgroup of ${\rm S}_8$ generated by the permutations $(4,5,6,7,8)$, $(4,8) (5,7)$ and $(1, 2, 3)$.
\end{enumerate}
\end{proof}

In Table \ref{tab:ACAut}, we summarize the results in Proposition \ref{prop:Aut}.
Here an $\mathbb{F}_{11}$-form of $C$ is a (nonhyperelliptic superspecial trigonal) curve $C^{\prime}$ over $\mathbb{F}_{11}$ such that $C \cong C^{\prime}$ over $\overline{\mathbb{F}_{11}}$, where $\mathrm{Aut}(C)$ denotes the automorphism group over $\overline{\mathbb{F}_{11}}$.

\renewcommand{\arraystretch}{1.3}
\begin{table}[H]
\begin{center}
\begin{tabular}{ccc|ccc} \hline
Superspecial trigonal & \multirow{2}{*}{$\Aut(C)$} & \multirow{2}{*}{$\# \Aut(C)$} & $\mathbb{F}_{11}$-forms $C^{\prime}$ & \multirow{2}{*}{$\Aut_{\mathbb{F}_{11}}(C^{\prime})$} & \multirow{2}{*}{$\# \Aut_{\mathbb{F}_{11}}(C^{\prime})$} \\ 
curves $C$ over $\overline{\mathbb{F}_{11}}$ & & & of $C$ & & \\ \hline
\multirow{4}{*}{$C^{\rm (alc)}$} & \multirow{4}{*}{${\rm C}_3 \times {\rm D}_5$} & \multirow{4}{*}{$30$} & $C_1$ & ${\rm D}_5$ & $10$ \\
 & & & $C_2$ & ${\rm C}_5$ & $5$ \\
 & & & $C_3$ & ${\rm C}_5$ & $5$ \\
 & & & $C_4$ & ${\rm C}_2$ & $2$ \\ \hline
\end{tabular}
\caption{The automorphism group $\mathrm{Aut} (C^{\rm (alc)}):=\mathrm{Aut}_{\overline{\mathbb{F}_{11}}} (C^{\rm (alc)})$ of the superspecial trigonal curve $C^{\rm (alc)}:=V(F) = V (F_1)$ over $\overline{\mathbb{F}_{11}}$ and the automorphism groups $\mathrm{Aut}_{\mathbb{F}_{11}} (C_i)$ of the superspecial trigonal curves 
$C_i:=V(F_i)$ over $\mathbb{F}_{11}$ for $1 \leq i \leq 4$.
Here $F$ and $F_i$ for $1 \leq i \leq 4$ are defined in {\rm Proposition} $\ref{prop:isom}$.
}\label{tab:ACAut}
\end{center}
\end{table}

\subsection{Compatibility with Galois cohomology}
In this subsection, we show that our enumeration of superspecial trigonal curves over the prime field $\mathbb{F}_{11}$ is compatible with that by Galois cohomology together with our result over the algebraic closure.

We denote by $\Gamma$ the absolute Galois group $\mathrm{Gal}(\overline{\mathbb{F}_{11}} / \mathbb{F}_{11})$.
Let $C$ be a nonhyperelliptic superspecial trigonal curve over $\mathbb{F}_{11}$, and $\mathrm{Aut}(C)$ its automorphism group over the algebraic closure $\overline{\mathbb{F}_{11}}$.
Let $\sigma$ be the Frobenius on $\mathrm{Aut}(C)$.
For two elements $a$ and $b$ of $\mathrm{Aut}(C)$, they are said to be $\sigma$-{\it conjugate} if $a = g^{-1}bg^\sigma$ for some $g \in \mathrm{Aut}(C )$.
Then one has the group isomorphism
\begin{equation}\label{GaloisCoh-Aut}
H^1 ( \Gamma, \mathrm{Aut} ( C ) ) \cong \mathrm{Aut}(C ) / \sigma\text{-conjugacy} .
\end{equation}
Here, it is known that $H^1 ( \Gamma, \mathrm{Aut} (C ) )$ parametrizes $\mathbb{F}_{11}$-forms of $C$, see \cite[Chap.III, \S 1.1, Prop. 1]{S}.
Let $a \in \mathrm{Aut}( C )$, and let $C^{(a)}$ denote the $\F_{11}$-form associated to $a$ via the isomorphism \eqref{GaloisCoh-Aut}.
If $Frob$ is the Frobenius map on $C$, then the Frobenius on $C^{(a)}$ is given by $a (Frob)$ via an isomorphism from $C \otimes \overline{\mathbb{F}_{11}}$ to $C^{(a)}\otimes \overline{\mathbb{F}_{11}}$.
Then we have
\begin{eqnarray}
\mathrm{Aut}_{\mathbb{F}_{11}} (C^{(a)}) & \simeq & \{ g \in \mathrm{Aut}( C ) \mid g (a (Frob)) = (a (Frob)) g \} \nonumber \\
& = & \{ g \in \mathrm{Aut}( C ) \mid a = g^{-1}ag^\sigma\} \nonumber \\
& = & \sigma\text{-}{\rm Stab}_{\Aut (C )}(a), \nonumber
\end{eqnarray}
where $\simeq$ is a bijection and $\sigma\text{-}{\rm Stab}_{\Aut (C )}(a)$ denotes the $\sigma$-stabilizer group of $a$
in $\mathrm{Aut}(C)$.
Then, we have the following:
\begin{prop}
With notation as above, we have $|\mathrm{Aut}(C ) / \sigma\mbox{\rm -conjugacy}| = 4$.
Moreover, the orders of the $\sigma$-stabilizer groups are $10$, $5$, $5$, and $2$.
\end{prop}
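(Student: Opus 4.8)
The plan is to deduce both assertions from the geometric classification already established, together with the standard dictionary between Galois cohomology and forms, and then to corroborate the numbers by a direct computation inside the finite group $\Aut(C)\cong\mathrm{C}_3\times\mathrm{D}_5$.

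First I would invoke the isomorphism \eqref{GaloisCoh-Aut} and the fact, recalled just before it from \cite[Chap.~III, \S 1.1, Prop.~1]{S}, that $H^1(\Gamma,\Aut(C))$ is in bijection with the set of $\mathbb{F}_{11}$-isomorphism classes of $\mathbb{F}_{11}$-forms of $C:=C^{(\mathrm{alc})}$. Thus it suffices to count these forms. On one hand, every $\mathbb{F}_{11}$-form $C'$ of $C$ is again a nonhyperelliptic superspecial trigonal curve of genus $5$ over $\mathbb{F}_{11}$: the genus and the superspeciality are geometric invariants, and trigonality descends to $\mathbb{F}_{11}$ because the degree-$3$ pencil is unique in genus $5$ (Section \ref{UniquenessPencil}), hence $\Gamma$-stable, so the corresponding line bundle and the map to $\mathbf{P}^1$ are defined over $\mathbb{F}_{11}$. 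On the other hand, by Proposition \ref{prop:isom}(II) every superspecial trigonal curve of genus $5$ over $\mathbb{F}_{11}$ is $\overline{\mathbb{F}_{11}}$-isomorphic to $C$, i.e.\ is a form of $C$, and by Proposition \ref{prop:isom}(I) there are exactly four such curves up to $\mathbb{F}_{11}$-isomorphism, namely $C_1,C_2,C_3,C_4$. Hence $|\Aut(C)/\sigma\text{-conjugacy}|=|H^1(\Gamma,\Aut(C))|=4$.

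For the stabilizer orders I would use the bijection $\Aut_{\mathbb{F}_{11}}(C^{(a)})\simeq\sigma\text{-}{\rm Stab}_{\Aut(C)}(a)$ displayed above: under the correspondence of the previous paragraph the four $\sigma$-conjugacy classes are represented by elements $a_1,\dots,a_4$ with $C^{(a_i)}\cong C_i$ over $\mathbb{F}_{11}$, so $|\sigma\text{-}{\rm Stab}(a_i)|=\#\Aut_{\mathbb{F}_{11}}(C_i)$. By Proposition \ref{prop:Aut}(I) (see Table \ref{tab:ACAut}) these automorphism groups are $\mathrm{D}_5,\mathrm{C}_5,\mathrm{C}_5,\mathrm{C}_2$, of orders $10,5,5,2$, which is the claim. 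As an independent check one can argue entirely inside $\Aut(C)\cong\mathrm{C}_3\times\mathrm{D}_5$: the Frobenius $\sigma$ fixes the $\mathrm{D}_5$-factor (its generators in Proposition \ref{prop:Aut} are $\mathbb{F}_{11}$-rational) and inverts the $\mathrm{C}_3$-factor (the generator $c$ has eigenvalue $\zeta^{80}\in\mathbb{F}_{121}$ and $(\zeta^{80})^{11}=\zeta^{40}=\zeta^{-80}$). Then for $(u,v)\in\mathrm{C}_3\times\mathrm{D}_5$ the $\sigma$-conjugate by $(g_1,g_2)$ is $(u\,g_1^{-2},\,g_2^{-1}vg_2)$; since $g_1\mapsto g_1^{-2}$ is onto $\mathrm{C}_3$, the $\sigma$-conjugacy classes of $\mathrm{C}_3\times\mathrm{D}_5$ biject with the ordinary conjugacy classes of $\mathrm{D}_5$ (of which there are four: $\{e\}$, the two classes of nontrivial rotations, and the class of reflections), and the $\sigma$-stabilizer of $(u,v)$ equals $\{e\}\times C_{\mathrm{D}_5}(v)$, with orders $10,5,5,2$ over these four classes.

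The only genuinely delicate point is the identification of the $\mathbb{F}_{11}$-forms of $C^{(\mathrm{alc})}$ with exactly the four curves of Proposition \ref{prop:isom}(I), i.e.\ the $\Gamma$-stability of the trigonal pencil; but this is immediate from the uniqueness statement of Section \ref{UniquenessPencil}. Everything else is bookkeeping with the groups $\mathrm{C}_3$ and $\mathrm{D}_5$ and a short trace of the Frobenius on the matrix generators of Proposition \ref{prop:Aut}.
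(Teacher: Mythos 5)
Your proposal is correct, but it is organized quite differently from the paper's proof, and the difference matters for the role this proposition plays. The paper proves the statement by brute force inside the explicit matrix group $G_K/\sim\;=\langle a,b,c\rangle$: it tests $M=g^{-1}M'g^{\sigma}$ for all pairs, exhibits four representatives $g_1,\dots,g_4$, and lists generators of each $\sigma$-stabilizer. The point of the subsection is that this computation is carried out \emph{independently} of Theorem \ref{MainTheorem} and Propositions \ref{prop:isom} and \ref{prop:Aut}, so that the agreement of the numbers $4$ and $10,5,5,2$ with the earlier enumeration serves as a consistency check. Your primary argument runs the implication in the opposite direction: you use the bijection $H^1(\Gamma,\Aut(C))\leftrightarrow\{\mathbb{F}_{11}\text{-forms}\}$ together with Propositions \ref{prop:isom} and \ref{prop:Aut} to read off the answer. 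This is a valid proof of the literal statement (and your descent argument for trigonality via the uniqueness of the $g^1_3$ from Section \ref{UniquenessPencil} is the right way to see that every form of $C^{(\mathrm{alc})}$ is again superspecial trigonal of genus $5$), but it would render the concluding sentence of the subsection --- that the computation ``supports the correctness'' of the enumeration --- vacuous. Your secondary, ``independent check'' argument is the one that actually plays the paper's role, and it is cleaner than the paper's enumeration: you identify $\Aut(C)\cong\mathrm{C}_3\times\mathrm{D}_5$ with $\sigma$ trivial on $\langle a,b\rangle$ (rational entries) and equal to inversion on $\langle c\rangle$ (since $(\zeta^{80})^{11}=\zeta^{40}=\zeta^{-80}$), so that $\sigma$-conjugation sends $(u,v)$ to $(ug_1^{-2},g_2^{-1}vg_2)$; as $g_1\mapsto g_1^{-2}=g_1$ is surjective on $\mathrm{C}_3$, the $\sigma$-classes biject with the four ordinary conjugacy classes of $\mathrm{D}_5$ and the stabilizers are $\{e\}\times C_{\mathrm{D}_5}(v)$, of orders $10,5,5,2$. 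This is correct and gives a conceptual explanation of the paper's machine output; the only hypothesis you rely on, namely that $\Aut(C)$ really is the internal direct product $\langle c\rangle\times\langle a,b\rangle$ with $\sigma$ acting componentwise, follows from the explicit generators in Proposition \ref{prop:Aut}(II) ($c$ is scalar on the $x,y$-block, hence central, and $\langle c\rangle\cap\langle a,b\rangle=1$). I would suggest presenting the group-theoretic computation as the proof and relegating the forms argument to a remark, so that the consistency check retains its force. (Incidentally, your computation also confirms that the paper's stated order ``$5$'' for the $\sigma$-stabilizer of $g_4$, said to be generated by the involution $b$, is a typo for $2$.)
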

\begin{proof}
Let $F = x y z^3 + x^5 + y^5$.
Recall from Proposition \ref{prop:isom} (II) that $F$ gives the singular model of a representative of the unique $\overline{\mathbb{F}_{11}}$-isomorphism class of superspecial trigonal curves over $\mathbb{F}_{11}$.
As in the proof of Proposition \ref{prop:Aut} (II), we put
\[
G_K := \{ M \in \mathrm{GL}_3 (K) : M \cdot F = \lambda F \mbox{ for some } \lambda \in K^{\times} \}
\]
with $K= \overline{\mathbb{F}_{11}}$, and we write $M \sim c M$ for $c \in K^{\times}$.
Recall from the proof of Proposition \ref{prop:Aut} (II) that the group $G_K / \sim$, which is isomorphic to $\mathrm{Aut}(C )$, is generated by
\[
a:=
\begin{pmatrix}
4&0&0\\
0&3&0\\
0&0&1
\end{pmatrix}, \quad
b:=
\begin{pmatrix}
0&4&0\\
3&0&0\\
0&0&1
\end{pmatrix}, \quad
\mbox{and }
c:=
\begin{pmatrix}
\zeta^{80}&0&0\\
0&\zeta^{80}&0\\
0&0&1
\end{pmatrix}
\]
whose orders are $5$, $2$ and $3$, respectively.
Here $\zeta$ is a root of $t^2 + 7 t + 2$, which is a primitive element of $\mathbb{F}_{121}$.
To compute $\mathrm{Aut}(C ) / \sigma\mbox{\rm -conjugacy}$, we conduct the following:
For each pair $(M, M^{\prime})$ of two distinct elements $M$ and $M^{\prime}$ in the group $\langle a, b, c \rangle = G_K / \sim$, decide whether $M$ and $M^{\prime}$ are $\sigma$-conjugate, i.e., $M = g^{-1}M^{\prime}g^\sigma$ for some $g \in \langle a, b, c \rangle$, or not.
This can be easily decided by a straightforward computation, e.g., brute force on all elements in $g \in \langle a, b, c \rangle$.
As a result, we can take the following four matrices as representatives of $\sigma$-conjugacy classes:
\[
g_1:=
\begin{pmatrix}
1&0&0\\
0&1&0\\
0&0&1
\end{pmatrix}, \quad
g_2:=
\begin{pmatrix}
3&0&0\\
0&4&0\\
0&0&1
\end{pmatrix}, \quad
g_3:=
\begin{pmatrix}
9&0&0\\
0&5&0\\
0&0&1
\end{pmatrix}, \quad
\mbox{and }
g_4:=
\begin{pmatrix}
0&\zeta^{64}&0\\
\zeta^{16}&0&0\\
0&0&1
\end{pmatrix}.
\]
By a computation similar to the computation of $\mathrm{Aut}(C ) / \sigma\mbox{\rm -conjugacy}$, the $\sigma$-stabilizer group of each $g_i$ is determined as follows:
\begin{enumerate}
\item The $\sigma$-stabilizer group of $g_1$ has order $10$, and is generated by
\[
\begin{pmatrix}
3&0&0\\
0&4&0\\
0&0&1
\end{pmatrix},\\
\begin{pmatrix}
0&3&0\\
4&0&0\\
0&0&1
\end{pmatrix},\\
\begin{pmatrix}
0&4&0\\
3&0&0\\
0&0&1
\end{pmatrix},\\
\begin{pmatrix}
0&5&0\\
9&0&0\\
0&0&1
\end{pmatrix},\\ \mbox{ and }
\begin{pmatrix}
0&9&0\\
5&0&0\\
0&0&1
\end{pmatrix},
\]
whose orders are $5$, $2$, $2$, $2$ and $2$, respectively.
\item The $\sigma$-stabilizer group of $g_2$ has order $5$, and is generated by $g_2$.
\item The $\sigma$-stabilizer group of $g_3$ has order $5$, and is generated by $g_3$.
Note that this group is the same as the $\sigma$-stabilizer group of $g_2$.
\item The $\sigma$-stabilizer group of $g_4$ has order $5$, and is generated by $b$, which is a generator of $G_K / \sim$.
\end{enumerate}
\end{proof}
We see that $|\mathrm{Aut}(C ) / \sigma\mbox{\rm -conjugacy}|$ coincides with the number of $\mathbb{F}_{11}$-isomorphism classes of superspecial trigonal curves of genus $5$ over $\mathbb{F}_{11}$, see Proposition \ref{prop:isom} (I).
Moreover, the orders of the $\sigma$-stabilizer groups also coincide with those of automorphism groups over $\mathbb{F}_{11}$ in Proposition \ref{prop:Aut} (I).
These support the correctness of our computational enumeration over $\mathbb{F}_{11}$ in Theorem \ref{MainTheorem} and Propositions \ref{prop:isom} (I) and \ref{prop:Aut} (I).
\appendix

\section{Testing the irreducibility for a given quintic form}\label{sec:irr}

In this appendix, we describe a simple method to test the irreducibility for a given quintic form in $\mathbb{F}_q[x, y, z]$.
Specifically, testing the irreducibility is reduced into testing the (non-)existence of roots of multivariate systems.
This method is used in the proofs of Propositions \ref{prop:Case2q11} and \ref{prop:Case4q11}, where we show the irreducibility for some quintic forms in $\mathbb{F}_{11} [x,y,z]$.
Note that we do not write down this simple method in algorithmic format since the irreducibility test for polynomials is not the main subject of this paper.
By the same reason, let us not refer to any other method, while various algorithms for the irreducibility test have been already proposed.

Let $f$ be a quintic form in $\mathbb{F}_{q} [x,y,z]$ such that the coefficient of $x^5$ in $f$ is $1$.
In the following, we consider the irreducibility over $K = \overline{\mathbb{F}_q}$ (resp.\ $K =\mathbb{F}_{q^s}$ for some $s \geq 0$).
It is clear that the quintic form $f$ is reducible over $K$ if and only if $f$ is factored into a product of a quadratic form and a cubic form over $K$, or a linear form and a quartic form.
If $f$ is factored into a product of a quadratic form $g_2$ and a cubic form $g_3$, there exists $(b_1, \ldots , b_{14}) \in K^{\oplus 14}$ with $f = g_2 g_3$ such that
\begin{eqnarray}
g_2 & = & x^2 + b_1 x y + b_2 y^2 + b_3 x z + b_4 y z + b_5 z^2, \nonumber \\
g_3 & = & x^3 + b_6 x^2 y + b_7 x y^2 + b_8 y^3 + b_9 x^2 z + b_{10} x y z + b_{11} y^2 z + b_{12} x z^2 + b_{13} y z^2 + b_{14} z^3. \nonumber
\end{eqnarray}
In other word, the multivariate system derived from the coefficients of monomials in $f - g_2 g_3$ with respect to $x$, $y$ and $z$ has a root over $K$, where we regard $b_i$'s as indeterminates.
One can check this, for example, by computing a reduced Gr\"{o}bner basis of the ideal generated by the coefficients (resp.\ the coefficients and $\{ b_i^{q^s} - b_i : 1 \leq i \leq 14 \}$) in $\mathbb{F}_q [b_1, \ldots , b_{14}]$.
If the basis is $\{ 1 \}$, then the system has no root over $K$, otherwise it has a root over $K$.
Thus, computing the reduced Gr\"{o}bner basis with respect to some term ordering, we can decide whether $f$ is factored into a product of a quadratic form and a cubic form over $K$.
Similarly, one can test whether $f$ is factored into a product of a linear form and a cubic form over $K$.
In this way, we can test the irreducibility for $f$.

Note that this method can be applied to more general cases: polynomials of arbitrary degree in the polynomial ring of $n$ variables over $\mathbb{F}_q$ (while the computational cost will become much higher).

\end{document}